\documentclass[epsfig,11pt,final]{article}
\usepackage{geometry}                
\geometry{letterpaper}  
\usepackage{graphicx}
\usepackage{epsfig}
\usepackage{amssymb}
\usepackage{epstopdf}
\usepackage{epsf,graphicx,amsmath,amsfonts}
\usepackage[color,notref,notcite]{showkeys}
\title{A fully-discrete Semi-Lagrangian scheme for a first order mean field game problem}

\author{ E. Carlini \thanks{Dipartimento di Matematica, Sapienza Universit\`a di Roma \tt{(carlini@mat.uniroma1.it)}  +39 06 49913214.} \and F. J. Silva  \thanks{XLIM - DMI 
UMR CNRS 7252
Facult\'e des Sciences et Techniques,
Universit\'e de Limoges {\tt (francisco.silva@unilim.fr) +33 5 87506787}. The support 
 of the European Union under the ``7th Framework Program FP7-PEOPLE-2010-ITN Grant agreement number 264735-SADCO'' is gratefully acknowledged.}}












 \newcommand{\Z}{{\mathbb Z}}

\newcommand{\ds}{\displaystyle}
\newcommand{\db}{\mathbf{d}}

\newcommand{\diver}{{\rm{div}}}

\newtheorem{remark}{\textbf{Remark}}[section]

\newcommand\be{\begin{equation}}
\newcommand\ee{\end{equation}}
\newcommand\ba{\begin{array}}
\newcommand\ea{\end{array}}
\newcommand{\bean}{\begin{eqnarray*}}
\newcommand{\eean}{\end{eqnarray*}}

\def\half{\mbox{$\frac{1}{2}$}}

\newcommand{\NN}{\mathbb{N}}
\newcommand{\ZZ}{\mathbb{Z}}
\newcommand{\RR}{\mathbb{R}}

\def\dd{{\rm d}}


\def\A{\mathcal{A}}
\def\B{\mathcal{B}}
\def\C{\mathcal{C}}

\def\G{\mathcal{G}}

\def\P{\mathcal{P}}

\def\Z{\mathcal{Z}}
\newcommand{\Rd}{{\mathbb R^d}}
\newcommand{\Zd}{{\mathbb Z^d}}



\newcommand{\ov}[1]{\overline{#1}}

\def\weight(#1,#2){c_{#1,#2}}

\def\eps{\varepsilon}



\def\rar{\rightarrow}

\def\ds{\displaystyle}

\newcommand{\supp}{\mathrm{supp}\;}
\newenvironment{proof}[1][Proof]{\textbf{#1.} }{\ \rule{0.5em}{0.5em}}
\newtheorem{lemma}{\textbf{Lemma}}[section]
\newtheorem{theorem}{\textbf{Theorem}}[section]

\newtheorem{proposition}{\textbf{Proposition}}[section]

\newtheorem{definition}{\textbf{Definition}}[section]

\numberwithin{equation}{section}

\begin{document}

\maketitle
\begin{abstract} In this work we propose a fully-discrete Semi-Lagrangian scheme for a {\it first order mean field game system}. We prove that the resulting discretization admits at least one solution and, in the scalar case,  we prove a convergence result for the scheme. Numerical simulations and examples are also discussed.
\end{abstract}

{\bf{Keywords:}} Mean field games, First order system, Semi-Lagrangian schemes,  Numerical methods.

{\bf {MSC 2000}:} Primary, 65M12, 91A13; Secondary, 65M25, 91A23, 49J15, 35F21.

\thispagestyle{plain}

\section{Introduction}
Initiated by the seminal work of Aumann \cite{Aumann64}, models to study equilibria in games with a large number of players have become an important research line in the fields of Economics and Applied Mathematics. In this direction, Mean Field Games (MFG) models were recently introduced by  J-M. Lasry and P.-L. Lions in \cite{LasryLions06i,LasryLions06ii,LasryLions07} in the form of  a  new system of Partial Differential Equations (PDEs). Under some assumptions, the solution of this system  captures the main properties of Nash equilibria for differential games with a very large number of {\it identical ``small'' players}. For a survey of MFG theory and its applications, we refer the reader to   \cite{Cardialaguet10,GLL10} and the lectures of P-L. Lions at the Coll\`ege de France \cite{cursolions}. The evolutive PDE system introduced in  \cite{LasryLions06ii}, with variables $(v,m)$,  is of the form:
\small
\be\ba{rl}\label{MFGgeneral} 
-\partial_{t} v (x,t) -\sigma^2 \Delta v(x,t)+H(x, D v(x,t)) &  = F(x, m(t)),  \;  \;    \hbox{in }  \RR^d\times (0,T), \\[6pt]
\partial_{t} m (x,t) -\sigma^2 \Delta m(x,t)-\diver\big(  \partial_{p}H(x,D  v(x,t)) m(x,t) \big) &=0, \; \; \; \hbox{in } \RR^d\times (0,T), \\[6pt]
v(x,T)= G(x, m(T)) \quad \mbox{for } x \in \RR^{d}  &, \; \; m(0)=m_0  \in \P_{1},
\ea\ee
\normalsize
where $\sigma \in \RR$, $\P_1$ denotes the space of probability measures on $\Rd$ and $F: \RR^{d} \times \P_{1} \to \RR$, $G: \RR^{d} \times \P_{1} \to \RR$ and $H:Ê\RR^{d} \times \RR^d \to \RR$ are given functions. 
%
The Hamiltonian $H$ is supposed to be convex with respect to the second variable $p$. An important feature of the above system is its {\it forward-backward structure}: We have a backward Hamilton-Jacobi-Bellman (HJB) equation, i.e. with  a terminal condition, coupled with a forward Fokker-Planck equation with initial datum $m_0$.

Under rather general assumptions, it can be proved that if $\sigma\neq 0$ then   \eqref{MFGgeneral}  admits {\it regular solutions} (see \cite[Theorem 2.6]{LasryLions07}). Based on this fact, finite differences schemes have been thoroughly analyzed   in the papers \cite{AchdouCapuzzo10,AchdouCapuzzoCamilli10,AchdouCapuzzoCamilli12}. When $H(x,p)$ is quadratic with respect to $p$, specific methods have been proposed in  \cite{gueant10,Lachapelle10}. 

%
%
%
%
%
%

In this work, we are interested in the numerical analysis of the {\it first order case} ($\sigma=0$) with quadratic Hamiltonian $H(x,p)=\half |p|^{2} $. In this case, system  \eqref{MFGgeneral} takes the form
\small
 \be\ba{rcl}\label{MFG}  -\partial_{t} v (x,t) +\half |D v(x,t)|^{2}  & =& F(x, m(t)),  \hspace{0.2cm}  \hbox{in }  \RR^d\times (0,T),  \\[6pt]
 \partial_{t} m (x,t) -\diver\big(D v(x,t) m(x,t) \big)  &=& 0,\quad \hbox{in } \RR^d\times (0,T), \\[6pt]
  v(x,T)= G(x, m(T)) \quad \mbox{for } x \in \RR^{d} & ,  & m(0)=m_0  \in \P_{1}.
\ea\ee
\normalsize
The second equation (i.e. the Fokker-Planck equation with $\sigma=0$) is called the {\it continuity equation} and   describes the transport of the initial measure $m_{0}$ by the flow induced by  $-Dv(\cdot, \cdot)$. 
When $F$ and $G$ are non-local  and {\it regularizing} operators (see \cite{LasryLions07}), the existence of a solution $(v,m)$ of \eqref{MFG} can be proved by a fixed point argument (see \cite{Cardialaguet10,cursolions}).
However, the numerical approximation  of $(v,m)$   is very challenging since, besides the   forward-backward structure of \eqref{MFG},  we can expect only Lipchitz regularity for $v$ and $L^{\infty}$ regularity for $m$ (see e.g.  \cite{Cardialaguet10}). 

Although several numerical methods have been analyzed for  each one of the  equations  in \eqref{MFG} (see e.g. the monographs  \cite{falconeferretilibro,Sethianbook,Osherbook} and the references therein for the HJB equation and \cite{PiccoliTosin,TosinFrasca} for the continuity equation), when the coupling between both equations is present,  the authors are aware only of references \cite{GosseJames02}, for the scalar case $d=1$, and \cite{AchdouCamilliCorrias11}, for the multidimensional case. However, in both references the structure of the system is {\it forward-forward}, i.e. both equations have initial conditions. This fact  completely changes the theoretical and numerical analysis of the problem. As a matter of fact,  for example  in  \cite{AchdouCamilliCorrias11}, the key property for convergence result of the proposed numerical scheme is a {\it one side Lipschitz condition} for $Dv(\cdot, \cdot)$ of the form:
\be\label{wewewepprpwrpwrw} \exists C>0 \; \; \mbox{such that } \forall t \in [0,T], \; \; \langle Dv(x, t)-Dv(y, t), x-y \rangle  \geq  -C |x-y|^{2}.\ee
By the results in \cite{PuopadRascle}, condition \eqref{wewewepprpwrpwrw} assures the stability of the so-called Fillipov characteristics and of the   associated  {\it measure solutions} of the continuity equation, which are the key to obtain their convergence result. Unfortunately, in our case \eqref{wewewepprpwrpwrw} corresponds to the semiconvexity  of $v$, which does  not holds for an arbitrary time horizon $T$ (see \cite{CannSinesbook}). 

Our  line of research   follows the ideas in  \cite{camsilva12}, where a semi-discrete in time Semi-Lagrangian scheme is proposed  to approximate \eqref{MFG} and a convergence result is obtained.  However, since the space variable is not discretized, the resulting scheme cannot be simulated. In this paper we propose a fully-discrete  Semi-Lagrangian scheme for \eqref{MFG}  and we study its main properties. We prove that the fully-discrete problem admits at least one solution and, for the case $d=1$, we are able to prove the convergence of the scheme to a solution $(v,m)$ of  \eqref{MFG},  when the discretization parameters tend to zero in a suitable manner. The key point of the proof is a discrete semiconcavity property for the discretized solutions.  Let us point out that our approximation scheme is presented in a general dimension $d$ and   several properties are proved in this generality. However, since in general \eqref{wewewepprpwrpwrw} does not hold, uniform estimates in the $L^\infty$ norm for the solutions of the scheme seems to be unavoidable in order to prove the convergence (see \cite{Cardialaguet10} for similar arguments regarding the vanishing viscosity approximation of \eqref{MFG}). Since we are able to prove these bounds only for $d=1$, our  convergence result for the fully-discrete scheme is  valid only in this case. 

The  paper is organized as follows:
In Section \ref{notandassum} we state our main assumptions, we collect some useful properties about semiconcave functions and we recall the main existence and uniqueness results for \eqref{MFG}.
 Section \ref{fulldisc} is devoted to  the fully-discrete scheme. We establish the main properties of the scheme and we prove our main results: The fully-discrete scheme admits at least one solution and, if $d=1$ and the discretization parameters tend to zero in a suitable manner, every limit point of the solutions of the scheme is a solution of \eqref{MFG}. Finally, in Section \ref{numericaltests} we display some numerical simulations in the case of one space dimension.

\section{Preliminaries}\label{notandassum}
\subsection{Basic assumptions and existence and uniqueness results  for \eqref{MFG}}
We denote by $\P_1$ the set of the probability measures $m$ such that $\int_{\RR^d} |x|dm(x)<\infty$. The set $\P_1$ is be endowed with
the Kantorovich-Rubinstein  distance
\be\label{distance} \db_{1}(\mu, \nu)= \sup\left\{ \int_{\RR^{d}} \phi(x) \dd [\mu-\nu](x) \ ; \ \phi:\RR^{d}\to \RR \hspace{0.3cm} \mbox{is 1-Lipschitz}\right\}.
\ee
Given a measure $\mu \in \P_{1}$ we denote by $\supp(\mu)$   its support. In what follows, in order to simplify the notation,  the operator $D$ (resp. $D^{2}$) will denote the derivative (resp. the second derivative) with respect to the space variable $x\in \RR^{d}$. We suppose that the functions $F, G: \RR^{d}\times \P_{1} \to \RR$ and the measure $m_{0}$, which are  the data of   \eqref{MFG},   satisfy the following assumptions:\medskip\\
\textbf{(H1)} $F$ and $G$ are continuous over $\RR^{d}\times \P_{1}$. \smallskip\\
 \textbf{(H2)} There exists a constant $c_{0}>0$ such that for any $m\in \P_{1}$
 $$  \|F(\cdot, m)\|_{C^{2}} +  \|G(\cdot, m)\|_{C^{2}} \leq c_{0},$$
 where $ \|f(\cdot)\|_{C^{2}}:=\sup_{x\in \RR^d}\{|f(x)|+|Df(x)|+|D^2f(x)|\}.$ \smallskip\\
 \textbf{(H3)} The initial condition $m_{0}\in \P_{1}$ is absolutely continuous with respect to the Lebesgue measure, with density still denoted by $m_{0}$, and satisfies   $\supp(m_0)\subset B(0,c_{1})$ and  $\|m_{0}\|_{\infty}\leq c_{1}$, for some $c_{1}>0$ .  \smallskip
 
 As a general rule in this paper, given an absolutely continuous  measure (w.r.t the Lebesgue measure in $\RR^{d}$) $m\in \P_{1}$, its density will   still be denoted by $m$.
Let us recall the definition of a solution $(v,m)$   of \eqref{MFG} (see \cite{LasryLions06ii, LasryLions07}).\smallskip
\begin{definition} The pair $(v,m) \in W^{1,\infty}_{loc}(\RR^{d}\times [0,T])\times L^{1}(\RR^{d}\times (0,T))$ is a solution of \eqref{MFG} if the first equation  is satisfied  in the viscosity sense, while the second one  is satisfied in the distributional sense. More precisely,  for every $\phi \in \C_{c}^{\infty}\left((\RR^{d} \times [0,T) \right)$
\be\label{defsoldistribucioncompleta}\int_{\RR^{d}} \phi(x,0) m_{0}(x)\dd x + \int_{0}^{T}\int_{\RR^{d}}\left[ \partial_{t} \phi(x,t)- \langle D v(x,t), D \phi(x,t)\rangle\right]m(x,t) \dd x \dd t =0.\ee 
\end{definition}

\begin{remark} Classical arguments (see e.g. \cite{Ambrosiogiglisav}) imply that \eqref{defsoldistribucioncompleta} is equivalent to 
\be\label{defsoldistribucionsoloenespacio}  \int_{\RR^{d}} \phi(x) m_{0}(x)\dd x -\int_{0}^{t} \int_{\RR^{d}} \langle D v(x,s), D \phi(x)\rangle    m(x,s) \dd x \dd s = 0,\ee
 for all  $t \in [0,T]$ and  $\phi \in C_{c}^{\infty}(\RR^{d})$.
\end{remark}\smallskip

The following existence  result is proved in    \cite{cursolions,Cardialaguet10}. \smallskip
\begin{theorem}\label{existenciacasocont}  Under {\rm \textbf{(H1)}-\textbf{(H3)}} there exists at least a solution $(v,m)$ of \eqref{MFG}.
\end{theorem}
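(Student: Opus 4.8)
The plan is to obtain the solution as a fixed point of a map acting on measure flows, following the by-now classical strategy for first-order systems. I would work in the set $\mathcal{C}$ of continuous curves $t\mapsto \mu(t)\in\P_1$ that are supported in a fixed ball $B(0,R)$ and that are Lipschitz in time for the distance $\db_1$ with a prescribed constant, both $R$ and the constant to be fixed a priori. Endowed with the uniform topology, $\mathcal{C}$ is convex and compact: the uniform support bound makes $\db_1$ metrize weak convergence on a fixed compact set, while the time-Lipschitz bound provides equicontinuity, so an Arzel\`a--Ascoli argument applies. The aim is then to define a continuous self-map of $\mathcal{C}$ and invoke Schauder's theorem.

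Given $\mu\in\mathcal{C}$, I would first solve the backward Hamilton--Jacobi--Bellman equation
\be -\partial_{t} v +\half |Dv|^{2} = F(x,\mu(t)), \qquad v(\cdot,T)=G(\cdot,\mu(T)),\ee
whose unique viscosity solution $v=v[\mu]$ admits the value-function representation
\be v[\mu](x,t)=\inf_{\gamma(t)=x}\left\{\int_{t}^{T}\left[\half|\dot\gamma(s)|^{2}+F(\gamma(s),\mu(s))\right]\dd s+G(\gamma(T),\mu(T))\right\}.\ee
Using \textbf{(H2)}, the uniform $C^{2}$-bounds on the data yield, by standard optimal-control estimates, that $v[\mu]$ is Lipschitz with a constant $C$ depending only on $c_{0}$ and $T$ and, crucially, that $v[\mu](\cdot,t)$ is semiconcave uniformly in $t$ and in $\mu$. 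Along minimizing trajectories the optimal velocity is $\dot\gamma=-Dv[\mu](\gamma,\cdot)$, which is exactly the drift of the continuity equation.

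Next I would build the flow. Semiconcavity of $v[\mu]$ guarantees forward uniqueness of optimal trajectories for Lebesgue-a.e.\ starting point, so the map $x\mapsto\Phi_{t}(x)$ sending $x$ to the optimal trajectory issued at time $0$ is well defined $m_{0}$-a.e.; I set $m[\mu](t):=(\Phi_{t})_{\#}m_{0}$. A direct computation shows that $m[\mu]$ solves the continuity equation with drift $-Dv[\mu]$ in the distributional sense, that (by regularity of the flow) it stays absolutely continuous and hence lies in $L^{1}$, that its support remains in $B(0,c_{1}+CT)$, and that it is $C$-Lipschitz in time for $\db_1$. Choosing $R=c_{1}+CT$ and the time-Lipschitz constant equal to $C$ ensures that $\Phi:\mu\mapsto m[\mu]$ maps $\mathcal{C}$ into itself.

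The remaining point, which I expect to be the main obstacle, is the continuity of $\Phi$. For a convergent sequence $\mu_{n}\to\mu$ in $\mathcal{C}$, continuity of $F$ and $G$ in the measure argument \textbf{(H1)} gives locally uniform convergence of the data, and stability of viscosity solutions then yields $v[\mu_{n}]\to v[\mu]$ locally uniformly. The delicate step is transferring this to convergence of the flows: since $v$ is only semiconcave and its gradient need not converge pointwise, one must argue at the level of optimal trajectories, exploiting the no-crossing property and the stability of minimizers, to conclude $m[\mu_{n}]\to m[\mu]$ in $C([0,T];\P_1)$. Once continuity is established, Schauder's fixed point theorem produces $\mu=m[\mu]$; setting $v=v[\mu]$ and $m=\mu$, the pair $(v,m)$ is a solution of \eqref{MFG}.
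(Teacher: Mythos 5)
Your proposal is correct and follows essentially the same route as the proof the paper relies on (it defers to \cite{cursolions,Cardialaguet10}, whose argument is exactly the Schauder fixed-point scheme you describe): solve the HJB equation for a given $\mu$, use uniform Lipschitz/semiconcavity bounds to define the optimal flow and push $m_{0}$ forward, obtain the self-map and compactness estimates, and pass to the limit via stability of optimal trajectories. Indeed, the machinery the paper recalls in Section \ref{notandassum} --- Proposition \ref{propiedadesdelavalue}, the flow \eqref{definicionphicontinua}, the no-crossing Proposition \ref{caracterisnosecruzancont}, Theorem \ref{expresionentermtransp}, and the fixed-point reformulation (MFG) --- is precisely the skeleton of your argument.
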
 \smallskip

A uniqueness result can be obtained assuming  \medskip\\
\textbf{(H4)} The following  {\it monotonicity} conditions hold true
\be\ba{l}
    \int_{\RR^{d}} \left[F(x,m_1)-F(x,m_2)\right] \dd[m_1-m_2](x)\geq 0 \quad \mbox{for all }  \ m_1, m_2 \in \P_{1}\\ [6pt]
    \int_{ \RR^{d} }  \left[G(x,m_1)-G(x,m_2)\right] \dd[m_1-m_2](x)\geq 0 \quad \mbox{for all } \,m_1, m_2 \in \P_{1}.
\ea \ee
We have (see   \cite{cursolions,Cardialaguet10}): \smallskip
\begin{theorem}\label{unicidadcasocont} Under {\rm \textbf{(H1)}-\textbf{(H4)}}  system \eqref{MFG} admits a unique solution  $(v,m)$.
\end{theorem}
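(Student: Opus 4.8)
The plan is to carry out the Lasry--Lions monotonicity argument, adapted to the first order system \eqref{MFG}. Suppose $(v_1,m_1)$ and $(v_2,m_2)$ are two solutions and set $\bar v:=v_1-v_2$, $\bar m:=m_1-m_2$. The core of the proof is the differential identity for the duality pairing $t\mapsto \int_{\RR^{d}}\bar v(x,t)\,\bar m(x,t)\,\dd x$. Replacing $\partial_t\bar v$ by the difference of the two Hamilton--Jacobi--Bellman equations, $\partial_t\bar m$ by the difference of the two continuity equations, and integrating by parts in space, the gradient terms $\half(|Dv_1|^2-|Dv_2|^2)(m_1-m_2)$ and $-\langle D\bar v, Dv_1 m_1-Dv_2 m_2\rangle$ combine, after collecting the coefficients of $m_1$ and $m_2$ separately, into the perfect square $-\half|Dv_1-Dv_2|^2(m_1+m_2)$. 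This yields
\[
\frac{\dd}{\dd t}\int_{\RR^{d}}\bar v\,\bar m\,\dd x
= -\half\int_{\RR^{d}}|Dv_1-Dv_2|^2\,(m_1+m_2)\,\dd x
-\int_{\RR^{d}}\big[F(x,m_1)-F(x,m_2)\big]\,\dd[m_1-m_2](x).
\]

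Integrating this identity over $(0,T)$, the boundary term at $t=0$ vanishes since $m_1(0)=m_2(0)=m_0$, while at $t=T$ the terminal condition $v_i(\cdot,T)=G(\cdot,m_i(T))$ turns $\int_{\RR^{d}}\bar v(T)\bar m(T)\,\dd x$ into $\int_{\RR^{d}}\big[G(x,m_1(T))-G(x,m_2(T))\big]\,\dd[m_1(T)-m_2(T)](x)$. Rearranging, one obtains
\begin{align*}
\int_{\RR^{d}}\big[G(x,m_1(T))-G(x,m_2(T))\big]\,\dd[m_1(T)-m_2(T)](x)
&+\int_0^T\!\!\int_{\RR^{d}}\big[F(x,m_1)-F(x,m_2)\big]\,\dd[m_1-m_2]\,\dd t\\
&= -\half\int_0^T\!\!\int_{\RR^{d}}|Dv_1-Dv_2|^2\,(m_1+m_2)\,\dd x\,\dd t.
\end{align*}
By the monotonicity assumption \textbf{(H4)} both terms on the left are non-negative, whereas the right-hand side is non-positive; hence all three terms vanish. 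In particular $\int_0^T\!\int_{\RR^{d}}|Dv_1-Dv_2|^2(m_1+m_2)\,\dd x\,\dd t=0$, so $Dv_1=Dv_2$ holds $m_i$-almost everywhere, i.e.\ on $\supp m_1\cup\supp m_2$.

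Since the drift fields $-Dv_1$ and $-Dv_2$ coincide on the supports of the respective measures, both $m_1$ and $m_2$ solve the same continuity equation with the same initial datum $m_0$; uniqueness of the distributional solution of the continuity equation for this velocity field then gives $m_1=m_2=:m$. With the density fixed one has $F(\cdot,m_1(t))=F(\cdot,m_2(t))$ and $G(\cdot,m_1(T))=G(\cdot,m_2(T))$, so $v_1$ and $v_2$ are viscosity solutions of one and the same HJB equation with identical terminal data, and uniqueness for first order Hamilton--Jacobi equations yields $v_1=v_2$, completing the proof.

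The main obstacle is the rigorous justification of the differential identity above: under \textbf{(H1)}--\textbf{(H3)} we only dispose of $v_i\in W^{1,\infty}_{loc}$ as viscosity solutions and $m_i\in L^\infty$ as distributional solutions, so $\partial_t v_i$ and the products of gradients are not defined classically and the integration by parts is not licit as stated. The way around this is to exploit the extra regularity of solutions of \eqref{MFG} already used in the construction of Theorem \ref{existenciacasocont}: $v_i$ is Lipschitz and semiconcave and $m_i$ has a bounded density transported along the optimal trajectories of the associated control problem. Semiconcavity guarantees that $v_i$ is twice differentiable Lebesgue-a.e., hence $m_i$-a.e.\ since $m_i$ is absolutely continuous, so the gradient products and the integration by parts make sense $m_i$-a.e. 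One then regularizes $v_i$ and $m_i$ by mollification, runs the computation on the smooth approximants, and passes to the limit, using the one-sided bound on $D^2 v_i$ furnished by semiconcavity to control the error terms. I would invoke these regularity properties and concentrate the technical effort on this limiting procedure and on the almost-everywhere identification of $Dv_1$ and $Dv_2$ on the supports of the measures.
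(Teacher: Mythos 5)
Your proposal is correct and takes essentially the same approach as the paper: the paper itself offers no proof of this theorem, deferring to \cite{cursolions,Cardialaguet10}, where uniqueness is established by exactly this Lasry--Lions duality/monotonicity computation (the perfect-square identity, vanishing of all terms under \textbf{(H4)}, identification $m_1=m_2$ via uniqueness for the continuity equation with the semiconcave-gradient drift as in Theorem \ref{expresionentermtransp}, then $v_1=v_2$ by comparison for the HJB equation). Your concluding remarks on making the formal computation rigorous via the Lipschitz/semiconcavity regularity of $v_i$ and the $L^\infty$ bounds on $m_i$ likewise mirror the treatment in the cited references.
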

\subsection{Standard semiconcavity results}
In the proof of Theorem \ref{existenciacasocont}, as well as in the the proof of our main results, the concept of semiconcavity plays a crucial role. For a complete account of the theory and its applications to the solution of HJB equations, we refer the reader to the book \cite{ CannSinesbook}. \smallskip
\begin{definition} We say that $w:\RR^{d} \to \RR$ is semiconcave with constant $C_{conc}>0$ if  for every $x_{1}, x_{2} \in \RR^{d}$, $\lambda \in (0,1)$ we have
\be\label{desigdefinsemicon}w( \lambda x_{1} + (1-\lambda) x_{2}) \geq \lambda w(x_{1}) + (1-\lambda) w(x_{2}) - \lambda (1-\lambda) \frac{C_{conc}}{2} |x_{1}- x_{2}|^{2}.\ee
A function $w$ is said to be semiconvex if $- w$ is semiconcave.
\end{definition}
\smallskip

Recall that for $w:\RR^{d} \to \RR$ the super-differential $D^{+} w(x)$ at $x\in \RR^{d}$ is defined as
\be\label{definicionsuperdiferencial} D^{+}w(x):= \left\{ p \in \RR^{d} \ ; \ \limsup_{y\to x} \frac{ w(y)-w(x) - \langle p, y- x\rangle}{|y-x|} \leq 0 \right\}. \ee
We collect in the following Lemmas some useful properties of semiconcave functions (see \cite{ CannSinesbook}).\smallskip
\begin{lemma}\label{equivdefsemicon} For a function $w:\RR^{d}\to \RR$, the following assertions are equivalent:\smallskip\\
{\rm(i)}  The function $w$ is semiconcave, with constant $C_{conc}$.\smallskip\\
{\rm(ii)} For all $x,y\in \RR^{d}$, we have
$$ w(x+y)+ w(x-y) -2w(x) \leq C_{conc} |y|^{2}.$$
{\rm(iii)} For all $x$, $y \in \RR^{d}$  and $p\in D^{+}w(x)$, $q\in D^{+}w(y)$
\be\label{relacionmonotoniasemiconvex} \langle q-p, y-x \rangle \leq C_{conc}|x-y|^{2}.\ee
{\rm(iv) }Setting $I_{d}$ for the identity matrix, we have  that  $D^{2}w \leq C_{conc} I_{d}$ in the sense of distributions.
\end{lemma}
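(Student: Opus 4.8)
The plan is to reduce all four statements to the single classical fact that a function is concave if and only if it satisfies the four corresponding characterizations with constant zero. The key observation is that $w$ is semiconcave with constant $C_{conc}$ precisely when the auxiliary function $\tilde w(x):=w(x)-\tfrac{C_{conc}}{2}|x|^{2}$ is concave. Indeed, using the elementary identity $|\lambda x_{1}+(1-\lambda)x_{2}|^{2}=\lambda|x_{1}|^{2}+(1-\lambda)|x_{2}|^{2}-\lambda(1-\lambda)|x_{1}-x_{2}|^{2}$, inequality \eqref{desigdefinsemicon} for $w$ is seen to be equivalent to the ordinary concavity inequality for $\tilde w$. My first step would be to record this reduction and then to check that each of (ii), (iii) and (iv) likewise translates, term by term, into a standard property of $\tilde w$.

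For the translation I would use three elementary computations. From the parallelogram-type identity $|x+y|^{2}+|x-y|^{2}-2|x|^{2}=2|y|^{2}$ one gets $w(x+y)+w(x-y)-2w(x)=\tilde w(x+y)+\tilde w(x-y)-2\tilde w(x)+C_{conc}|y|^{2}$, so (ii) is exactly midpoint concavity of $\tilde w$. Since $\tfrac{C_{conc}}{2}|x|^{2}$ is smooth, the superdifferentials satisfy $D^{+}w(x)=C_{conc}x+D^{+}\tilde w(x)$, whence for $p\in D^{+}w(x)$, $q\in D^{+}w(y)$ one has $\langle q-p,y-x\rangle=\langle \tilde q-\tilde p,y-x\rangle+C_{conc}|x-y|^{2}$ with $\tilde p\in D^{+}\tilde w(x)$ and $\tilde q\in D^{+}\tilde w(y)$; thus (iii) is precisely the monotonicity (antitonicity) of the superdifferential of $\tilde w$. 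Finally $D^{2}w=D^{2}\tilde w+C_{conc}I_{d}$ in the distributional sense, so (iv) is the distributional concavity $D^{2}\tilde w\leq 0$.

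It then remains to prove the equivalence, for $\tilde w$, of concavity, midpoint concavity, monotone superdifferential, and $D^{2}\tilde w\leq 0$ distributionally. I would close this as a cycle. The implication concavity $\Rightarrow$ midpoint concavity is immediate by taking $\lambda=\tfrac12$. For the passage from midpoint concavity, and more importantly for handling the distributional Hessian, I would use mollification: convolving $\tilde w$ with a standard mollifier produces smooth functions $\tilde w_{\varepsilon}$ whose concavity is equivalent to $D^{2}\tilde w_{\varepsilon}\leq 0$ pointwise, after which one passes to the limit to transfer the property to $\tilde w$. The equivalence with the superdifferential condition would use that a concave function admits a nonempty superdifferential at every point, together with the defining subgradient inequality.

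The main obstacle I expect is not the algebra of the reduction, which is routine, but the two nonsmooth steps: deducing pointwise concavity from the merely distributional sign condition $D^{2}\tilde w\leq 0$, and from midpoint concavity. Both require the regularity that semiconcave functions enjoy, namely that such a function is locally Lipschitz, hence locally bounded and continuous. I would therefore first record this regularity and then carry out the mollification argument, which also secures that the superdifferential of $\tilde w$ is everywhere nonempty and hence that characterization (iii) is meaningful.
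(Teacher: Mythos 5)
A preliminary remark: the paper does not actually prove this lemma --- it is quoted as a collection of standard facts with a citation to the Cannarsa--Sinestrari book --- so your proposal can only be measured against that classical argument, and your route is exactly it. The reduction to concavity of $\tilde w(x):=w(x)-\tfrac{C_{conc}}{2}|x|^{2}$ is the standard device, and all of your algebraic translations are correct: the convexity identity makes (i) the concavity of $\tilde w$, the parallelogram law makes (ii) its midpoint concavity, the shift $D^{+}w(x)=C_{conc}x+D^{+}\tilde w(x)$ makes (iii) the monotonicity of $D^{+}\tilde w$, and $D^{2}w=D^{2}\tilde w+C_{conc}I_{d}$ makes (iv) the distributional inequality $D^{2}\tilde w\leq 0$.

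The genuine gap is in your last paragraph, and it is a circularity. In the implications (ii) $\Rightarrow$ (i), (iii) $\Rightarrow$ (i), (iv) $\Rightarrow$ (i), the function $w$ is not yet known to be semiconcave, so you may not ``record the regularity that semiconcave functions enjoy'' (that is Lemma \ref{propsemiconcgenerica1}(i), whose hypothesis is precisely the conclusion you are after) and apply it to $w$. Nor can this regularity be extracted from (ii)--(iv) themselves: a discontinuous additive function $f$ (i.e. $f(x+y)=f(x)+f(y)$, $\mathbb{Q}$-linear but not $\RR$-linear, built from a Hamel basis) satisfies $f(x+y)+f(x-y)-2f(x)=0$, hence (ii) with any constant; being additive and discontinuous it is unbounded above on every open set (Bernstein--Doetsch), so $D^{+}f(x)=\emptyset$ at every $x$ and (iii) holds vacuously; yet $f$ is not semiconcave, since a finite semiconcave function is locally Lipschitz while $f$ is not even measurable --- which also means the mollification you propose cannot be launched. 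The equivalence is therefore really a statement about \emph{continuous} functions; this is the standing assumption in the cited book, and it is harmless in the paper, where the lemma is only applied to Lipschitz value functions, but in your proof it must be made explicit rather than borrowed from (i). Once continuity of $w$ is assumed, your plan closes: midpoint concavity plus continuity gives concavity by the dyadic argument; for (iv), the mollifications $\tilde w_{\eps}$ are concave and converge locally uniformly to $\tilde w$, and pointwise limits of concave functions are concave; for (iii) $\Rightarrow$ (i) one needs one further device, since $D^{+}\tilde w$ may a priori be empty at many points --- e.g. the fact that a continuous function is superdifferentiable on a dense set (touch it from above by paraboloids of large curvature at interior maxima), or a sup-convolution argument, before running the monotonicity relation.
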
 \smallskip
\begin{lemma}\label{propsemiconcgenerica1} Let $w:\RR^{d} \to \RR$ be semiconcave. Then: \smallskip\\
{\rm(i) }  $w$ is locally Lipschitz. \smallskip\\
{\rm(ii)} If $w_{n}$ is a sequence of semiconcave functions (with the same semiconcavity constant) converging pointwise to  $w$, then the convergence is locally uniform and $Dw_{n}(\cdot)\to Dw(\cdot)$   a.e. in $  \RR^{d}$.
 \end{lemma}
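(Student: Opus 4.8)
My plan is to exploit the standard decomposition of a semiconcave function into a concave part plus a smooth quadratic. By Lemma \ref{equivdefsemicon}(iv), semiconcavity of $w$ with constant $C_{conc}$ is equivalent to $D^2 w \leq C_{conc} I_d$ in the distributional sense, so the function $\hat w(x) := w(x) - \frac{C_{conc}}{2}|x|^2$ satisfies $D^2 \hat w \leq 0$, i.e. $\hat w$ is concave on $\RR^d$. For part (i) it then suffices to recall the classical fact that a finite concave function on $\RR^d$ is locally Lipschitz: on any ball one bounds the difference quotients by the oscillation of $\hat w$ on a slightly larger ball, which is finite since a concave function is locally bounded. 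Adding back the smooth term $\frac{C_{conc}}{2}|x|^2$ preserves local Lipschitz continuity, which gives (i).

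For the first assertion of (ii), I first note that the semiconcavity inequality \eqref{desigdefinsemicon} involves only pointwise values and a fixed constant, so it passes to the pointwise limit; hence $w$ is semiconcave with the same constant $C_{conc}$ and, by (i), locally Lipschitz. Setting $\hat w_n := w_n - \frac{C_{conc}}{2}|\cdot|^2$, the $\hat w_n$ are concave and converge pointwise to the finite concave function $\hat w := w - \frac{C_{conc}}{2}|\cdot|^2$. Concavity together with convergence at finitely many points yields local equi-boundedness of $(\hat w_n)$ — a lower bound because a concave function on a simplex is bounded below by the minimum of its vertex values, and a matching upper bound by a symmetric comparison along segments through a fixed interior point — and local equi-boundedness of concave functions on a slightly enlarged compact set produces a uniform Lipschitz bound on the original compact set. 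Thus $(w_n)$ is locally equi-Lipschitz, and pointwise convergence then upgrades to locally uniform convergence by the Arzel\`a--Ascoli theorem.

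It remains to prove $Dw_n \to Dw$ a.e., which I expect to be the delicate point. By Rademacher's theorem each of the locally Lipschitz functions $w, w_1, w_2, \dots$ is differentiable off a Lebesgue-null set, so at a.e. $x$ all of them are differentiable simultaneously; fix such an $x$. The key tool is the global characterization of the superdifferential of a semiconcave function: for semiconcave $g$ with constant $C_{conc}$ one has $p \in D^{+} g(x)$ if and only if $g(y) \leq g(x) + \langle p, y-x \rangle + \frac{C_{conc}}{2}|y-x|^2$ for all $y$, which follows at once from the concavity of $g - \frac{C_{conc}}{2}|\cdot|^2$. Applying this with $g = w_n$ and $p_n := Dw_n(x) \in D^{+} w_n(x)$ gives $w_n(y) \leq w_n(x) + \langle p_n, y-x \rangle + \frac{C_{conc}}{2}|y-x|^2$ for all $y$. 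Since the family is locally equi-Lipschitz, $(p_n)$ is bounded; extracting a subsequence with $p_n \to p$ and letting $n \to \infty$ using the already proven locally uniform convergence yields $w(y) \leq w(x) + \langle p, y-x \rangle + \frac{C_{conc}}{2}|y-x|^2$, i.e. $p \in D^{+} w(x)$. Because $w$ is differentiable at $x$ and semiconcave, its superdifferential there is the singleton $\{Dw(x)\}$, so $p = Dw(x)$; as every subsequential limit of the bounded sequence $(p_n)$ equals $Dw(x)$, the whole sequence converges and $Dw_n(x) \to Dw(x)$. The main obstacle is precisely this superdifferential-stability step: everything hinges on converting the pointwise membership $p_n \in D^{+} w_n(x)$ into the global quadratic inequality so that the limit can be passed through.
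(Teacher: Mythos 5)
Your proof is correct. The paper gives no proof of this lemma at all---it simply cites the Cannarsa--Sinestrari book---and your argument (reduction to concave functions via $w - \frac{C_{conc}}{2}|x|^2$, local equi-boundedness and equi-Lipschitz estimates for concave functions, and stability of superdifferentials through the global inequality $w_n(y) \leq w_n(x) + \langle p_n, y-x\rangle + \frac{C_{conc}}{2}|y-x|^2$) is essentially the standard proof found in that reference, carried out correctly and in full.
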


\subsection{Representation formulas for the solutions of the HJB  and the continuity equations}
Let $\mu \in C([0,T]; \P_{1})$ be given and let us denote by $v[\mu]$ for  the unique viscosity solution of  
\be\label{hjbmu}\left.\ba{rcl} -\partial_{t} v (x,t) +\half |D v(x,t)|^{2}  & =& F(x, \mu(t)),  \hspace{0.2cm}  \hbox{in }  \RR^d\times (0,T),  \\[6pt]	
										v(x, T) & =& G(x,\mu(T)) \hspace{0.3cm}  \hbox{in }  \RR^{d}.\ea\right\}
\ee

Under assumptions \textbf{(H1)-(H2)}, standard  results  (see e.g.  \cite{BardiCapuzzo96}) yield that  for each $(x,t) \in \RR^{d}\times [0,T]$, the following representation formula for  $v[\mu](x,t)$ holds true \small
$$\left.\ba{l} v[\mu](x,t)=  \ds \mbox{inf}_{\alpha \in L^{2}([t,T]; \RR^{d})} \int_{t}^{T} \left[ \half |\alpha(s)|^{2} + F(X^{x,t}[\alpha](s), \mu(s)) \right] \dd s \\[8pt]
\hspace{4cm}+ G( X^{x,t}[\alpha](T), \mu(T)),\\[6pt]
\mbox{where } \hspace{1.4cm} X^{x,t}[\alpha](s):= x - \int_{t}^{s} \alpha(r) \dd r \hspace{0.5cm} \mbox{for all } s \in [t,T].\ea\right\} \eqno{(\mathcal{CP})^{x,t}[\mu]}  
$$ \normalsize
We set $\A^{x,t}[\mu]$ for the set of {\it optimal controls} $\alpha$ of  $(\mathcal{CP})^{x,t}[\mu]$, i.e. for the set of solutions of $(\mathcal{CP})^{x,t}[\mu]$. Classical arguments imply that for all $(x,t)$ the set  $\A^{x,t}[\mu]$ is non empty.

We now collect some important well known properties of problem $(\mathcal{CP})^{x,t}[\mu]$ (see e.g. \cite{CannSinesbook,Cardialaguet10}).
\begin{proposition}\label{propiedadesdelavalue} Under {\bf (H1)-(H2)}, The value function $v[\mu]$ satisfies the following properties:\smallskip\\
{\rm(i)} We have that $(x,t) \to v[\mu](x,t)$ is  Lipchitz, with a Lipschitz constant independent of  $\mu$. \smallskip\\
{\rm(ii)} For all $t\in [0,T]$ the function $  v[\mu](\cdot, t) \in \RR$ is semiconcave,  uniformly with respect to  $\mu$.  \smallskip\\
{\rm(iii)} There exists a constant $c_{2}>0$ (independent of $(\mu,x,t)$) such that
$$ \|  \alpha \|_{L^{\infty}([t,T];\RR^{d})}\leq c_{2} \hspace{0.3cm} \mbox{for all } \ \alpha \in \A^{x,t}[\mu].$$
{\rm(iv)} For all  $(x,t)$  and $\alpha \in \A^{x,t}[\mu]$, we have that 
\be\label{repredermascasocontinuo} \alpha(t) \in D^{+} v [\mu](x,t) .\ee
{\rm(v)} For all $t\in [0,T]$ the function $v[\mu](\cdot, t)$ is differentiable at $x$ iff there exists $\alpha \in  \A^{x,t}[\mu]$ such that $\A^{x,t}[\mu]= \{ \alpha\}$. In this case, we have that 
\be\label{repredermascasocontinuo2}  D v [\mu](x,t)= \alpha(t).\ee
{\rm(vi)}  For every $s\in (t, T]$ and $\alpha \in  \A^{x,t}[\mu]$,   we have that  $v[\mu](\cdot, \cdot)$ is differentiable at $(X^{x,t}[\alpha](s),s)$.
\end{proposition}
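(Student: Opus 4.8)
The plan is to treat $\mu$ as a fixed datum, so that, writing $F(x,\mu(s))$ as a time-dependent state cost, the problem $(\mathcal{CP})^{x,t}[\mu]$ is a standard Bolza problem with strictly convex, coercive kinetic cost $\half|\alpha|^{2}$ and state costs of class $C^{2}$ in $x$, uniformly in $s$ by \textbf{(H2)}. Existence of minimizers being granted, the first step — the backbone of everything else — is to write the necessary optimality conditions. Computing the first variation of the cost along $\alpha_{\varepsilon}=\alpha+\varepsilon\beta$ (so that $X^{x,t}[\alpha_{\varepsilon}]=X^{x,t}[\alpha]-\varepsilon\int_{t}^{\cdot}\beta$) and applying Fubini to isolate $\beta(r)$, I obtain for every optimal $\alpha\in\A^{x,t}[\mu]$ the representation
\be\label{sketch-pmp} \alpha(r)=\int_{r}^{T} DF\big(X^{x,t}[\alpha](s),\mu(s)\big)\,\dd s + DG\big(X^{x,t}[\alpha](T),\mu(T)\big)\qquad\text{for a.e. }r\in[t,T].\ee
Property (iii) is then immediate: since $|DF|,|DG|\le c_{0}$ by \textbf{(H2)}, \eqref{sketch-pmp} gives $|\alpha(r)|\le c_{0}(1+T-r)\le c_{0}(1+T)=:c_{2}$, a bound independent of $(\mu,x,t)$. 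Moreover \eqref{sketch-pmp} shows that $\alpha$ has a continuous representative and that the optimal pair solves the Hamiltonian system $\dot X=-\alpha$, $\dot\alpha=-DF(X,\mu)$; since $DF(\cdot,\mu(s))$ is $c_{0}$-Lipschitz in space, this system has a unique solution for each prescribed pair $(X(s),\alpha(s))$, a uniqueness I will use repeatedly.

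For (i) and (ii) I use the classical device of freezing the control and perturbing the starting point; I write $J_{x}[\alpha]$ for the cost of $(\mathcal{CP})^{x,t}[\mu]$. Given $x_{1},x_{2}$ and an $\alpha$ optimal for $x_{1}$, the same $\alpha$ is admissible for $x_{2}$ and produces the translated trajectory $X^{x_{2}}=X^{x_{1}}+(x_{2}-x_{1})$; the $\half|\alpha|^{2}$ terms cancel in the difference of costs and the Lipschitz bound in \textbf{(H2)} yields $v[\mu](x_{2},t)-v[\mu](x_{1},t)\le c_{0}(1+T)|x_{2}-x_{1}|$, hence spatial Lipschitz continuity with a $\mu$-independent constant; Lipschitz continuity in $t$ then follows from the dynamic programming principle together with the velocity bound $c_{2}$ from (iii). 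For (ii) I repeat the argument symmetrically: for fixed $t$ and an $\alpha$ optimal for the midpoint $x$, testing $v[\mu](x\pm y,t)\le J_{x\pm y}[\alpha]$ with this same $\alpha$ and using $v[\mu](x,t)=J_{x}[\alpha]$ makes the kinetic terms disappear and leaves only second differences of $F$ and $G$, controlled by $\|D^{2}F\|_{\infty}+\|D^{2}G\|_{\infty}\le c_{0}$; this gives $v[\mu](x+y,t)+v[\mu](x-y,t)-2v[\mu](x,t)\le c_{0}(1+T)|y|^{2}$, i.e. semiconcavity with the uniform constant $c_{0}(1+T)$ via Lemma \ref{equivdefsemicon}(ii).

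Property (iv) comes from the same freezing argument pushed to first order: testing $v[\mu](x',t)\le J_{x'}[\alpha]$ with $\alpha$ optimal for $x$ and Taylor-expanding $F,G$ (the remainder $O(|x'-x|^{2})$ uniform by the $C^{2}$ bound) gives $v[\mu](x',t)-v[\mu](x,t)\le \langle q,\,x'-x\rangle + C|x'-x|^{2}$ with $q=\int_{t}^{T} DF(X^{x,t}[\alpha](s),\mu(s))\,\dd s+DG(X^{x,t}[\alpha](T),\mu(T))$, and \eqref{sketch-pmp} identifies $q=\alpha(t)$; dividing by $|x'-x|$ yields $\alpha(t)\in D^{+}v[\mu](x,t)$. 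For (v) I combine semiconcavity with \eqref{sketch-pmp}. Since $v[\mu](\cdot,t)$ is semiconcave it is differentiable at $x$ iff $D^{+}v[\mu](x,t)$ is a singleton, and by standard semiconcavity theory (see \cite{CannSinesbook}) its superdifferential is the convex hull of its reachable gradients $\lim_{n}Dv[\mu](x_{n},t)$ along points of differentiability $x_{n}\to x$. The correspondence between reachable gradients and optimal initial velocities is the crux: if $v[\mu](\cdot,t)$ is differentiable at $x$, then by (iv) every optimal $\alpha$ satisfies $\alpha(t)=Dv[\mu](x,t)$, and ODE uniqueness for the Hamiltonian system forces $\A^{x,t}[\mu]=\{\alpha\}$ with $Dv[\mu](x,t)=\alpha(t)$; conversely each reachable gradient $p$ is produced by controls $\alpha_{n}$ optimal at $x_{n}$ with $\alpha_{n}(t)\to p$, and the uniform bound (iii) plus stability of optimality under uniform convergence of trajectories (Arzel\`a--Ascoli and continuity of the cost) yields an optimal $\alpha$ at $x$ with $\alpha(t)=p$, so two distinct reachable gradients would force two distinct optimal controls — whence uniqueness of the optimal control makes $D^{+}v[\mu](x,t)$ a singleton and $v[\mu](\cdot,t)$ differentiable.

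Finally, (vi) is where I expect the main difficulty, since it asserts differentiability precisely at the interior points of an optimal trajectory. Let $y=X^{x,t}[\alpha](s)$ with $s\in(t,T]$. By the dynamic programming principle the tail $\alpha|_{[s,T]}$ is optimal for $(\mathcal{CP})^{y,s}[\mu]$. The key is a no-crossing property of extremals: if $\beta\in\A^{y,s}[\mu]$ is any forward-optimal control, the concatenation of $\alpha|_{[t,s]}$ with $\beta$ is optimal for $(\mathcal{CP})^{x,t}[\mu]$, hence satisfies \eqref{sketch-pmp}, is therefore continuous at $s$, and so $\beta(s)=\alpha(s)$; ODE uniqueness then gives $\beta=\alpha|_{[s,T]}$, making $\A^{y,s}[\mu]$ a singleton, so that (v) yields spatial differentiability of $v[\mu](\cdot,s)$ at $y$ with $Dv[\mu](y,s)=\alpha(s)$. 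The positivity $s>t$ is essential: it is the incoming piece $\alpha|_{[t,s]}$ that pins the arrival velocity and rules out the branching allowed at the initial time. To upgrade to joint differentiability at $(y,s)$ I use the joint $(x,t)$-semiconcavity of the value function of a Bolza problem with $C^{2}$ data together with the HJB equation, which fixes $\partial_{t}v[\mu](y,s)=\half|\alpha(s)|^{2}-F(y,\mu(s))$ once $Dv[\mu](y,s)=\alpha(s)$ is known, so that the full superdifferential reduces to one point (see \cite{CannSinesbook}). I anticipate that the rigorous stability/compactness step in (v) and the justification that the concatenated control inherits the optimality needed to invoke \eqref{sketch-pmp} will be the delicate points.
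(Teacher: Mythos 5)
Your proposal is correct: the Euler--Lagrange representation $\alpha(r)=\int_r^T DF(X(s),\mu(s))\,\dd s+DG(X(T),\mu(T))$, the control-freezing arguments for Lipschitz continuity and semiconcavity, and the concatenation/ODE-uniqueness argument for differentiability along optimal trajectories are precisely the classical proofs. The paper itself offers no proof of this proposition --- it states it as well known and cites \cite{CannSinesbook,Cardialaguet10} --- and your reconstruction follows essentially the same route as those references, with the remaining technical points (superdifferential as convex hull of reachable gradients, joint $(x,t)$-semiconcavity for part (vi)) appropriately deferred to \cite{CannSinesbook}.
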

\medskip 

Now, we define {\it a measurable selection of optimal flows}, i.e. of optimal trajectories for the family of problems $\{ (\mathcal{CP})^{x,t}[\mu] \; ; \; (x,t) \in \RR^{d}\times [0,T]\}$. Classical arguments (see \cite{Cardialaguet10,AubinFran90}) show  that the multivalued map $(x,t)\to \A^{x,t}[\mu]$,  admits a measurable selection $\alpha^{x,t}[\mu](\cdot)$. Given $(x,t)$ the {\it flow} $\Phi[\mu](x, t, \cdot)$ is defined as 
\be\label{definicionphicontinua}  \Phi[\mu](x,t,s):= x -\int_{t}^{s} \alpha^{x,t}[\mu] (r) \dd r \hspace{0.4cm} \mbox{for all } s\in [t,T].
\ee
By Proposition \ref{propiedadesdelavalue}{\rm(v)}-{\rm(vi)}, omitting the dependence on $\mu$ for notational convenience, $\Phi(x,t,\cdot)$ satisfies
\be\label{ecuacionsatisfporphi}\left.\ba{rcl}  \frac{\partial}{\partial s} \Phi(x,t,s)&=&   -D v[\mu](\Phi(x,t,s),s)  \hspace{0.4cm} \mbox{for } s\in (t,T),\\[6pt]
												 \Phi (x,t,t)&=&   x.\ea\right\}
\ee
For all $t\in [0,T]$, let us  define $m[\mu](t)$ as  the initial measure $m_{0}$ {\it transported by the flow} $\Phi[\mu]$. More precisely, 
\be\label{RPxCE} m[\mu](t):= \Phi[\mu](\cdot,0,t) \sharp m_{0}, \ee
i.e.  $$m[\mu](t)(A)= m_{0}\left( \Phi[\mu]^{-1}(\cdot,0,t)(A)\right) \hspace{0.3cm} \mbox{for all } A\in \B(\RR^{d}),$$
or equivalently, for   all bounded and continuous $\phi: \RR^{d}\to \RR$, 
$$ \int_{\RR^{d}} \phi(x) \dd \left[m[\mu](t)\right](x) = \int_{\RR^{d}} \phi\left(\Phi[\mu](x,0,t) \right)\dd m_{0}(x).$$
Since $\Phi[\mu](x,\cdot, \cdot)$ satisfies the {\it semigroup property}, omitting the dependence on $\mu$ for simplicity,
$$ \Phi(x,s,t)= \Phi( \Phi(x,s,r),r,t) \hspace{0.4cm} \mbox{for all } r\in [s,t],$$
we easily check that 
\be\label{transportcomp} m[\mu](t):= \Phi(\cdot, r,t) \sharp \left[ \Phi(\cdot,0,r) \sharp m_{0} \right]=\Phi(\cdot, r,t) \sharp \left[m[\mu](r)\right]    \hspace{0.4cm} \mbox{for all } r\in [s,t].\ee
The fundamental result is the following  \smallskip
\begin{proposition}\label{caracterisnosecruzancont} There exists a constant $c_{3}>0$ (independent of $(\mu, x,y,r,t)$),  such that 
$$| \Phi(x, r,t)-\Phi(y, r,t)| \geq c_{3}|x-y| \hspace{0.3cm} \mbox{ {\rm for all} }\hspace{0.3cm} 0\leq r\leq t, \hspace{0.3cm} \mbox{{\rm and} }\hspace{0.3cm} x,y \in \RR^{d}.$$
\end{proposition}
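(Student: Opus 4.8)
The plan is to show that the optimal flow is \emph{expansive} by exploiting the uniform semiconcavity of $v[\mu](\cdot,s)$ granted by Proposition \ref{propiedadesdelavalue}(ii), which is the discrete analogue of the one-sided Lipschitz condition \eqref{wewewepprpwrpwrw}. Fix $\mu$, fix $x,y\in\RR^d$ and $r\in[0,T]$, and set $\xi(s):=\Phi[\mu](x,r,s)$ and $\eta(s):=\Phi[\mu](y,r,s)$ for $s\in[r,T]$. By Proposition \ref{propiedadesdelavalue}(iii) both curves are Lipschitz in $s$ with constant $c_2$, and by Proposition \ref{propiedadesdelavalue}(vi) the value function $v[\mu]$ is differentiable at $(\xi(s),s)$ and at $(\eta(s),s)$ for every $s\in(r,T]$; hence \eqref{ecuacionsatisfporphi} yields $\dot\xi(s)=-Dv[\mu](\xi(s),s)$ and $\dot\eta(s)=-Dv[\mu](\eta(s),s)$ for a.e.\ $s$.

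First I would differentiate $g(s):=|\xi(s)-\eta(s)|^2$. Since $\xi-\eta$ is Lipschitz, $g$ is absolutely continuous, and for a.e.\ $s$
\[
\half\, g'(s)=\langle \xi(s)-\eta(s),\,\dot\xi(s)-\dot\eta(s)\rangle=\big\langle \xi(s)-\eta(s),\,Dv[\mu](\eta(s),s)-Dv[\mu](\xi(s),s)\big\rangle .
\]
The crucial step is to bound this from below. Writing $p:=Dv[\mu](\xi(s),s)\in D^{+}v[\mu](\xi(s),s)$ and $q:=Dv[\mu](\eta(s),s)\in D^{+}v[\mu](\eta(s),s)$, the uniform semiconcavity of $v[\mu](\cdot,s)$ together with Lemma \ref{equivdefsemicon}(iii), i.e.\ \eqref{relacionmonotoniasemiconvex} applied with $x=\xi(s)$ and $y=\eta(s)$, gives $\langle q-p,\,\eta(s)-\xi(s)\rangle\le C_{conc}|\xi(s)-\eta(s)|^2$, and therefore
\[
\half\, g'(s)=-\langle q-p,\,\eta(s)-\xi(s)\rangle\ \ge\ -C_{conc}\,g(s).
\]

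From $g'(s)\ge -2C_{conc}\,g(s)$ a Gronwall argument (the map $s\mapsto g(s)e^{2C_{conc}s}$ is nondecreasing on $[r,T]$) yields $g(t)\ge g(r)e^{-2C_{conc}(t-r)}$ for every $t\ge r$. Since $g(r)=|x-y|^2$ and $t-r\le T$, this is precisely $|\Phi[\mu](x,r,t)-\Phi[\mu](y,r,t)|\ge e^{-C_{conc}T}|x-y|$, so the assertion holds with $c_3:=e^{-C_{conc}T}$, which depends only on the uniform semiconcavity constant $C_{conc}$ and on $T$, hence not on $(\mu,x,y,r,t)$.

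The main obstacle is the rigorous justification of the a.e.\ identity $\dot\xi=-Dv[\mu](\xi,\cdot)$ and of the membership $Dv[\mu](\xi(s),s)\in D^{+}v[\mu](\xi(s),s)$ along the trajectories, which is what allows the monotonicity inequality \eqref{relacionmonotoniasemiconvex} to be invoked pointwise in $s$; both facts rest on the differentiability statement of Proposition \ref{propiedadesdelavalue}(vi) and on the Lipschitz regularity of the flow. Everything else is a routine differential-inequality computation. I would also stress that the sign works out only because the transport velocity is $-Dv[\mu]$ while $v[\mu]$ is semiconcave: the field $-Dv[\mu]$ is then one-sided Lipschitz \emph{from below}, which is exactly the condition preventing the optimal trajectories from contracting.
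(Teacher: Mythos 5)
Your proof is correct and follows exactly the route the paper indicates: the paper does not write out a detailed argument but cites \cite[Lemma 4.13]{Cardialaguet10}, explicitly stating that the key ingredients are the semiconcavity of $v[\mu](\cdot,t)$, uniform in $\mu$, together with Gronwall's Lemma, which is precisely your combination of \eqref{ecuacionsatisfporphi}, the monotonicity inequality \eqref{relacionmonotoniasemiconvex} for superdifferentials, and the differential inequality $g'\geq -2C_{conc}\,g$. Your write-up simply makes explicit the constant $c_{3}=e^{-C_{conc}T}$ and the justification via Proposition \ref{propiedadesdelavalue}{\rm(v)}-{\rm(vi)} that the flow is a classical solution of \eqref{ecuacionsatisfporphi} along which $Dv[\mu]$ exists, so no gap remains.
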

The key in the proof of the above Proposition  (see e.g.  \cite[Lemma 4.13]{Cardialaguet10}) is   the semiconcavity of $v[\mu](\cdot,t)$, which is uniform w.r.t $\mu$,  and   Gronwall's Lemma. As a consequence we have that (see e.g. \cite[Theorem 4.18 and Lemma 4.14]{Cardialaguet10}) \smallskip
\begin{theorem}\label{expresionentermtransp}  We have that $m[\mu](\cdot)$ is the unique solution (in the distributional sense)  of  
\be\label{eqcontimu}\left.\ba{rcl} \partial_{t} m (x,t) -\diver\big(D  v[\mu](x,t) m(x,t) \big)  &=& 0,\quad \hbox{in } \RR^d\times (0,T), \\[6pt]	
										m(x,0) & =& m_{0}(x) \hspace{0.3cm}  \hbox{in }  \RR^{d}.\ea\right\}
\ee
Moreover,  there exists a constant $c_{4}>0$, independent of $\mu$, such that $m[\mu]$ satisfies the following properties:\smallskip\\
{\rm(i)}  For all $t\in [0,T]$, the measure $m[\mu](t)$ is absolutely continuous {\rm(}with density still denoted by $m[\mu](t)${\rm)},  has a support in $B(0,c_{4})$  and $\|m[\mu](t)\|_{\infty}\leq c_{4}$.\smallskip\\
{\rm(ii)} For all $t,t' \in [0,T]$, we have that
$$ \db_{1}( \mu(t), \mu(t')) \leq c_{4}|t-t'|.$$
\end{theorem}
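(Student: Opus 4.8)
The plan is to establish the two claimed properties of $m[\mu]$ by exploiting the transport representation \eqref{RPxCE} together with the non-degeneracy estimate of Proposition \ref{caracterisnosecruzancont}. First I would prove the absolute continuity and the $L^\infty$ bound in (i). Since $m[\mu](t) = \Phi[\mu](\cdot,0,t)\sharp m_0$ and $m_0$ is absolutely continuous with $\|m_0\|_\infty \le c_1$ by \textbf{(H3)}, the key is to control the Jacobian of the flow map $x \mapsto \Phi[\mu](x,0,t)$. Proposition \ref{caracterisnosecruzancont} gives a uniform lower bound $|\Phi(x,0,t)-\Phi(y,0,t)| \ge c_3|x-y|$, which says the flow map is bi-Lipschitz from below; this forces the Jacobian determinant to be bounded below by $c_3^d$ wherever it exists. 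By the change-of-variables formula for push-forwards of absolutely continuous measures, the density of $m[\mu](t)$ at a point $\Phi(x,0,t)$ is $m_0(x)/|\det D_x\Phi(x,0,t)|$, so the lower bound on the determinant yields $\|m[\mu](t)\|_\infty \le c_1/c_3^d =: c_4$. The support bound follows since, by Proposition \ref{propiedadesdelavalue}(iii), the optimal velocities $\alpha^{x,0}[\mu]$ are bounded by $c_2$, so $\Phi(x,0,t)$ displaces $\supp(m_0)\subset B(0,c_1)$ by at most $c_2 T$, giving support in $B(0,c_1+c_2T)$; I would take $c_4$ to be the maximum of these constants.

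Next I would prove the $\db_1$-Lipschitz-in-time estimate in (ii). The natural approach is to test against an arbitrary $1$-Lipschitz $\phi$ in the definition \eqref{distance} of $\db_1$ and use the transport formula:
\be
\int_{\RR^d}\phi\,\dd[m[\mu](t)-m[\mu](t')]
= \int_{\RR^d}\big[\phi(\Phi(x,0,t))-\phi(\Phi(x,0,t'))\big]\dd m_0(x).
\ee
Since $\phi$ is $1$-Lipschitz, each integrand is bounded by $|\Phi(x,0,t)-\Phi(x,0,t')|$, and by the definition \eqref{definicionphicontinua} of the flow together with the uniform velocity bound $\|\alpha^{x,0}[\mu]\|_\infty \le c_2$ from Proposition \ref{propiedadesdelavalue}(iii), this is at most $c_2|t-t'|$. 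Taking the supremum over all $1$-Lipschitz $\phi$ and using that $m_0$ is a probability measure gives $\db_1(m[\mu](t),m[\mu](t')) \le c_2|t-t'|$, which is the desired bound after possibly enlarging $c_4$. (I note the statement writes $\db_1(\mu(t),\mu(t'))$, but in context this must read $m[\mu]$, since a continuity modulus for an arbitrary input $\mu$ cannot be derived.)

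The main obstacle I anticipate is the rigorous justification of the change-of-variables / Jacobian argument in part (i), because the flow map $\Phi(\cdot,0,t)$ is only Lipschitz (as $Dv[\mu]$ is merely bounded and measurable, not continuous), so $\Phi$ need not be $C^1$ and the classical area formula must be invoked in its Lipschitz form. The clean way around this is to avoid pointwise Jacobians altogether and argue directly at the level of measures: using the lower bound of Proposition \ref{caracterisnosecruzancont}, for any Borel set $A$ one has $\Phi^{-1}(\cdot,0,t)(A) \subset$ a set whose Lebesgue measure is controlled by $c_3^{-d}|A|$ (since $\Phi$ expands distances by at least $c_3$, its inverse contracts measure by at most $c_3^{-d}$), whence $m[\mu](t)(A) = m_0(\Phi^{-1}(A)) \le \|m_0\|_\infty |\Phi^{-1}(A)| \le c_1 c_3^{-d}|A|$; this establishes both absolute continuity and the $L^\infty$ bound simultaneously without differentiability assumptions. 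The remaining claims are then routine given the estimates already collected in the excerpt.
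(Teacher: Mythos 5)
Your treatment of parts (i) and (ii) is correct, and your measure-level argument for (i) --- bounding $|\Phi^{-1}(\cdot,0,t)(A)|$ by $c_3^{-d}|A|$ via the Lipschitz inverse granted by Proposition \ref{caracterisnosecruzancont}, rather than invoking pointwise Jacobians of a merely Lipschitz flow --- is exactly the mechanism the paper points to: it defers the proof to \cite[Theorem 4.18 and Lemma 4.14]{Cardialaguet10} and remarks that Proposition \ref{caracterisnosecruzancont} is precisely what makes the transported measure absolutely continuous with a uniform $L^{\infty}$ bound. Your reading of (ii) as a statement about $m[\mu]$ rather than the input $\mu$ is also the correct interpretation, and the velocity bound $c_2$ from Proposition \ref{propiedadesdelavalue}(iii) gives the Lipschitz estimate as you say.

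However, there is a genuine gap: the theorem's first and principal assertion --- that $m[\mu]$ \emph{is} a distributional solution of \eqref{eqcontimu} and, moreover, the \emph{unique} one --- is never addressed; it is not covered by "the remaining claims are then routine." The existence half requires the flow equation \eqref{ecuacionsatisfporphi} (which rests on the differentiability of $v[\mu]$ along optimal trajectories, Proposition \ref{propiedadesdelavalue}(v)--(vi)) followed by a chain-rule computation against test functions; that part is indeed manageable. The uniqueness half is decidedly not routine: the drift $-Dv[\mu]$ is only bounded and measurable, and for such vector fields distributional solutions of the continuity equation can fail to be unique. What saves the day is the uniform semiconcavity of $v[\mu](\cdot,t)$, which gives the one-sided Lipschitz (monotonicity) condition \eqref{relacionmonotoniasemiconvex} for $Dv[\mu]$ --- exactly condition \eqref{wewewepprpwrpwrw} discussed in the introduction --- and by the results of \cite{PuopadRascle} this yields stability and uniqueness of the Filippov characteristics and of the associated measure solutions. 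This is the content of the cited \cite[Theorem 4.18]{Cardialaguet10}, and it is the hard core of the statement; without it your proposal establishes the quantitative properties (i)--(ii) but not the theorem itself.
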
\smallskip
\begin{remark} In the proof of the above result (see \cite{Cardialaguet10}) Proposition \ref{caracterisnosecruzancont}  is crucial in order to show that the transported measure  $m[\mu](\cdot)$ is  absolutely continuous, as  $m_{0}$, and its density remains uniformly bounded in $L^{\infty}( \RR^{d})$.
\end{remark}\smallskip

Theorem \ref{expresionentermtransp}    {\rm(i)}-{\rm(ii)} implies   that $m[\mu](\cdot) \in C([0,T]; \P_{1})$. 
We thus see that \eqref{MFG} is equivalent to find $m\in C([0,T]; \P_{1})$, such that 
$$ m(t)= \Phi[m](\cdot,0,t) \sharp m_{0} \hspace{0.5cm} \mbox{for all } t\in [0,T]. \eqno{\rm(MFG)}$$ 
\section{The fully-discrete scheme}\label{fulldisc}
Given $h,\rho>0$, we consider a $d$ dimensional lattice $\mathcal{G}_{\rho}:=\{ x_i=i \rho,\; i\in \ZZ^d\}$ and a time-space grid $\mathcal{G}_{\rho,h}:= \mathcal{G}_{\rho}\times \{ t_k \}_{k=0}^{N}$, where $t_k=k h$ ($k=0,\hdots, N$) and $t_{N}=Nh=T$. We set  $B(\mathcal{G}_{\rho})$ and   $B(\mathcal{G}_{\rho,h})$ for the space of bounded functions defined on $\mathcal{G}_{\rho}$  and  $\mathcal{G}_{\rho,h}$, respectively. Given $f\in  B(\mathcal{G}_{\rho})$ and $g\in B(\mathcal{G}_{\rho,h})$ we will use the notation
$$ f_{i}:= f(x_{i}), \; \; \hspace{0.4cm} g_{i,k} := g(x_{i}, t_{k})  \hspace{0.4cm} \mbox{for all  $i \in \ZZ^{d}$ and $k=0,\hdots, N$}. $$ 
Let us consider the  $\mathbb{P}_1$ basis $\{ \beta_i \; ; \; i\in \ZZ^{d}\}$, where the function $\beta_{i}: \RR^{d} \to \RR$ is defined by $\beta_{i}(x):=\big[1-\frac{\|x-x_i\|_1}{\rho}\big]_+:= \max\{1-\frac{\|x-x_i\|_1}{\rho},0\}$. Denoting by $e_1, \hdots, e_d$   the canonical base of $\RR^{d}$,  it is easy to verify that $\beta_i(x)$ is continuous with compact support contained in $Q(x_i):=[x_{i}-\rho e_1,x_{i}+ \rho e_1]\times \dots  \times [x_{i}- \rho e_d,x_{i}+ \rho e_d]$, $0\leq \beta_i \leq 1$, $\beta_i(x_j)=\delta_{i j}$  (the Kronecker symbol) and $\sum_{i\in\ZZ^d}\beta_i(x)=1$. Let us consider the {\it interpolation operator}    $I[\cdot]:B(\mathcal{G}_{\rho})\to C_{b}(\RR^{d})$,  defined  by 
\be\label{definterpolation}
I[f](\cdot):=\sum_{i\in\ZZ^d}f_i\beta_i(\cdot).
\ee
We recall a standard  estimate for $I$ (see e.g. \cite{Ciarlet,quartesaccosaleri07}).  Given $\phi \in C_{b}(\RR^{d})$, let us define $\hat{\phi} \in B(\mathcal{G}_{\rho})$ by $\hat{\phi}_i:=\phi(x_i)$ for all $i \in \ZZ^{d}$. We have that
\begin{equation}\label{www}
\sup_{x\in\RR^d}| I[\hat{\phi}](x)-\phi(x)|=O(\rho^\gamma),
\end{equation}
where   $\gamma=1$ if $\phi$  is Lipschitz and  $\gamma=2$  if $\phi\in C^2(\RR^{d})$ with bounded first and second derivatives. 

\subsection{The fully-discrete scheme for the HJB equation}  For a 
given $\mu \in C([0,T],\P_1)$,  we define  recursively  $v \in B(\G_{\rho,h})$ using the following {\it Semi-Lagrangian scheme} for \eqref{hjbmu}:
\begin{equation}\label{scheme-control}
  v_{i,k}=S_{\rho, h}[\mu](v_{\cdot,k+1},i,k) \quad   k=0, \hdots, N-1 \hspace{0.3cm} \mbox{and } \hspace{0.2cm}  v_{i,N}= G(x_{i}, \mu(t_N)), \\
\end{equation}
where $S_{\rho,h}[\mu]: B(\mathcal{G}_{\rho})\times \ZZ^{d} \times \{0,\hdots, N-1\} \to \RR$ is defined as 
%
\be\label{definicionesquema}
S_{\rho,h}[\mu](f,i,k):=\inf_{\alpha\in \RR^{d}} \left[ I[f](x_{i}-h\alpha) + \half h |\alpha|^{2} \right] + h F(x_{i}, \mu(t_{k})).
\ee
The following properties of      $S_{\rho,h}[\mu]$ are a straightforward consequence of the definition and assumptions {\bf (H1)} and {\bf (H2)}. \smallskip
\begin{lemma}\label{propiedadesbasicasesquema} The following assertions hold true:\smallskip\\
{\rm(i)} {\rm[The scheme is well defined]} There exists at least one $\alpha \in \RR^{d}$ that minimizes the r.h.s. of \eqref{definicionesquema}. Moreover, there exists $c_5>0$ such that $\sup_{i \in \ZZ^{d}, k=0,\hdots, N} | v_{i,k}| \leq c_{5}$. 
\smallskip\\
{\rm(ii)}{\rm[Monotonicity]} For all $v, w \in  B(\mathcal{G}_{\rho})$ with $v\leq w$, we have that
\be\label{monotonia} S_{\rho,h}[\mu](v,i,k) \leq S_{\rho,h}[\mu](w,i,k) \hspace{0.5cm} \forall  \; i\in\ZZ^d,  \; k=0,\hdots, N.\ee
{\rm(iii)} For every $K \in \RR$ and  $w \in  B(\mathcal{G}_{\rho})$ we have 
\be\label{ck} S_{\rho,h}[\mu](w+K,i,n)= S_{\rho,h}[\mu](w,i,n)+K.\ee
{\rm(iv)}{\rm[Consistency]} Let  $(\rho_{n}, h_{n})\to 0$ (as $n\uparrow \infty$) and consider a sequence of grid points $(x_{i_{n}}, t_{k_n}) \to (x,t)$ and a sequence $\mu_{n} \in C([0,T]; \P_{1})$ such that $\mu_{n}\to \mu$. Then, for every 
$\phi \in  C^{1}\left(\RR^{d} \times [0,T)\right)$, we have\small
\be\label{Consistenzadebole}
\ba{ll}
\lim_{n\to \infty}  \frac{1}{h_n} \left[\phi(x_{i_n},t_{k_n})-S_{\rho_{n},h_{n}}[\mu_{n}](\phi_{k_{n+1}},i_n,k_n)\right] & =-\partial_t \phi(x,t)+\frac{1}{2}|D\phi(x,t)(x,t)|^2\\
																				\; & \;  \hspace{0.25cm} -F(x,\mu(t)).\ea\ee\normalsize
where $\phi_{k}=\{\phi(x_i,t_k)\}_{i\in\ZZ^d}$.
\end{lemma}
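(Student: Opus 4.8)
The plan is to lean on three elementary facts about the interpolation operator $I$: the weights form a partition of unity, $\sum_{i\in\ZZ^d}\beta_i\equiv 1$ with $\beta_i\ge 0$; $I$ reproduces nodal values, $I[f](x_j)=f_j$ (since $\beta_i(x_j)=\delta_{ij}$); and $I$ is affine and order preserving in the data. Well-definedness in (i) is then immediate: for fixed $f,i,k$ the map $\alpha\mapsto I[f](x_i-h\alpha)+\half h|\alpha|^2$ is continuous and coercive, because $|I[f]|\le\|f\|_\infty$ (the weights are a probability vector) while $\half h|\alpha|^2\to+\infty$; Weierstrass gives a minimizer on a ball depending only on $h$ and $\|f\|_\infty$. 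For (ii), $v\le w$ nodewise forces $I[v]\le I[w]$ pointwise (nonnegative weights), and this inequality survives $\inf_\alpha$ and the addition of $hF(x_i,\mu(t_k))$. For (iii), $\sum_i\beta_i\equiv 1$ yields $I[w+K]=I[w]+K$, and the additive constant factors out of the infimum.

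For the uniform bound in (i) I would argue by backward induction on $k$. Choosing $\alpha=0$ and using $I[v_{\cdot,k+1}](x_i)=v_{i,k+1}$ gives the upper bound $v_{i,k}\le v_{i,k+1}+hF(x_i,\mu(t_k))\le\|v_{\cdot,k+1}\|_\infty+hc_0$ via (H2); discarding $\half h|\alpha|^2\ge 0$ and using $I[v_{\cdot,k+1}]\ge-\|v_{\cdot,k+1}\|_\infty$ gives the matching lower bound $v_{i,k}\ge-\|v_{\cdot,k+1}\|_\infty-hc_0$. Hence $\|v_{\cdot,k}\|_\infty\le\|v_{\cdot,k+1}\|_\infty+hc_0$; starting from $\|v_{\cdot,N}\|_\infty=\sup_i|G(x_i,\mu(t_N))|\le c_0$ and accumulating the $N-k\le N$ increments of size $hc_0$, with $Nh=T$, I get $\|v_{\cdot,k}\|_\infty\le c_0(1+T)=:c_5$, independent of $\rho,h,\mu$.

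For the consistency (iv) I would first write $\frac{1}{h_n}[\phi(x_{i_n},t_{k_n})-S_{\rho_n,h_n}[\mu_n](\phi_{k_{n+1}},i_n,k_n)]$ as $\sup_{\alpha}\frac{1}{h_n}[\phi(x_{i_n},t_{k_n})-I[\phi_{k_{n+1}}](x_{i_n}-h_n\alpha)-\half h_n|\alpha|^2]-F(x_{i_n},\mu_n(t_{k_n}))$. The last term tends to $F(x,\mu(t))$ by (H1), since $x_{i_n}\to x$, $t_{k_n}\to t$ and $\db_{1}(\mu_n(t_{k_n}),\mu(t))\le\sup_s\db_{1}(\mu_n(s),\mu(s))+\db_{1}(\mu(t_{k_n}),\mu(t))\to 0$. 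In the supremum I would localize the controls to a fixed ball $\overline B(0,R)$: using the (local) Lipschitz bound $L$ of $\phi(\cdot,t_{k_n+1})$ inherited by $I[\phi_{k_{n+1}}]$ together with $I[\phi_{k_{n+1}}](x_{i_n})=\phi(x_{i_n},t_{k_n}+h_n)$, the bracket is bounded above by $-\partial_t\phi(x_{i_n},t_{k_n})+o(1)+L|\alpha|-\half|\alpha|^2$, a fixed downward parabola, so $\sup_\alpha=\sup_{|\alpha|\le R}$ for an $R$ chosen independently of $n$. On this ball I would replace $I[\phi_{k_{n+1}}]$ by $\phi(\cdot,t_{k_n+1})$ using \eqref{www} and Taylor-expand $\phi(x_{i_n}-h_n\alpha,t_{k_n}+h_n)=\phi(x_{i_n},t_{k_n})+h_n\partial_t\phi-h_n\langle D\phi,\alpha\rangle+o(h_n)$ uniformly in $|\alpha|\le R$ (uniform continuity of $D\phi,\partial_t\phi$ on compacta). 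What remains is $\sup_{|\alpha|\le R}[\langle D\phi(x,t),\alpha\rangle-\half|\alpha|^2]=\half|D\phi(x,t)|^2$, attained at $\alpha=D\phi(x,t)$, minus $\partial_t\phi(x,t)$ and minus $F(x,\mu(t))$, which is exactly the asserted limit.

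The main obstacle lies entirely in step (iv). First, the interpolation replacement is delicate: \eqref{www} only controls $I[\phi_{k_{n+1}}]-\phi(\cdot,t_{k_n+1})$ by $O(\rho_n^\gamma)$, and after division by $h_n$ this contributes $O(\rho_n^\gamma/h_n)$, so the argument genuinely needs the two parameters coupled so that this ratio vanishes (for merely $C^1$ data, $\rho_n/h_n\to 0$). Second, and more structurally, one must justify restricting the infimum to a bounded set of controls so that both the interpolation error and the Taylor remainder are uniform in $\alpha$; this localization (where one may need to assume, or arrange by truncating $\phi$ away from $(x,t)$, a global spatial gradient bound) is the real crux, after which the remaining limits are routine.
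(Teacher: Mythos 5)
Your arguments for (i)--(iii) are correct and are the expected ones: continuity plus coercivity (using $|I[f]|\le\|f\|_\infty$) for existence of a minimizer; the backward induction $\|v_{\cdot,k}\|_\infty\le\|v_{\cdot,k+1}\|_\infty+hc_0$ with $\|v_{\cdot,N}\|_\infty\le c_0$, giving $c_5=c_0(1+T)$; nonnegativity of the $\beta_i$ for monotonicity; and the partition of unity $\sum_i\beta_i\equiv 1$ for \eqref{ck}. Note that the paper offers no proof of this lemma at all -- it is dismissed as ``a straightforward consequence of the definition and assumptions (H1) and (H2)'' -- so your write-up supplies what the paper omits rather than paralleling an existing argument.

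Concerning (iv), your computation (rewrite the quotient as a supremum over $\alpha$, localize the control to a ball, replace $I[\phi_{k_n+1}]$ by $\phi(\cdot,t_{k_n}+h_n)$ via \eqref{www}, Taylor expand, and identify $\sup_\alpha\left[\langle D\phi(x,t),\alpha\rangle-\frac{1}{2}|\alpha|^2\right]=\frac{1}{2}|D\phi(x,t)|^2$) is the standard consistency argument, and the two obstacles you flag are genuine defects of the \emph{statement}, not of your proof. As written, for an arbitrary $\phi\in C^1$ the infimum defining $S_{\rho,h}$ can equal $-\infty$ (take $\phi(x,t)=-|x|^4$: the interpolant decays quartically in $|\alpha|$ while $\frac{h}{2}|\alpha|^2$ grows only quadratically), so some global bound on $D\phi$ -- or a restriction to bounded test functions -- is needed before the supremum can be localized; and the interpolation error contributes $O(\rho_n^\gamma/h_n)$ after division by $h_n$, which need not vanish when $(\rho_n,h_n)\to 0$ without coupling. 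Both points are harmless where the lemma is actually used: in the proof of Theorem \ref{covergencefullydiscrete} the hypothesis $\rho_n^2/h_n\to 0$ is in force, \eqref{www} is invoked there with $\gamma=2$, and viscosity-solution test functions can always be taken bounded with bounded first and second derivatives. Under those (implicitly assumed) conditions your proof closes completely; in effect you have identified the precise hypotheses under which the consistency assertion is true, which the paper leaves unstated.
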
 \smallskip
%
We define 
\be\label{definicionecontinuadevhrho}  v_{\rho,h}[\mu](x,t):= I[v_{\cdot, \left[ \frac{t}{h} \right]}](x) \hspace{0.5cm} \mbox{for all } \hspace{0.2cm} (x,t) \in \RR^{d}\times [0,T],
\ee
where we recall that $v_{i,k}$ is defined by \eqref{scheme-control}.   \smallskip

\begin{lemma}\label{propiedadesdelavmuhrhocontinua} For every $t\in [0,T]$, the following assertions hold true:\smallskip\\
{\rm(i) [Lipschitz property]} The function  $v_{\rho,h}[\mu](\cdot,t)$ is Lipschitz with constant independent of $(\rho,h,\mu,t)$.  \\[4pt]
{\rm(ii) [Weak semiconcavity]} There exists  $c_6>0$  independent of $(\rho,h,\mu,t)$ such  that for all $x$,$y \in \RR^d$ we have \small
\be\label{semiconcavidaddebilcondosf} v_{\rho,h}[\mu](x+y,t)  - 2v_{\rho,h}[\mu](x,t) +v_{\rho,h}[\mu](x-y,t) \leq c_{6} \left[|y|^2+\rho^2(E(x+y) + E(x-y))\right],
\ee \normalsize
where $E: \RR^{d}\to \RR$ is a nonnegative, continuous and bounded function vanishing in $\mathcal{G}_{\rho}$.
\end{lemma}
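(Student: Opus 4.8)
The plan is to prove both assertions by backward induction on the time index $k$, from $k=N$ down to $k=0$, exploiting the dynamic programming structure of $S_{\rho,h}[\mu]$ together with the identity $v_{\rho,h}[\mu](\cdot,t)=I[v_{\cdot,k}]$ for $k=[t/h]$. The single mechanism underlying both estimates is comparison by reuse of a minimizer: if $\alpha^*$ attains the infimum in \eqref{definicionesquema} at a base point $x_i$ (such a minimizer exists and is uniformly bounded by Lemma \ref{propiedadesbasicasesquema}), then inserting the same $\alpha^*$ into the definition of $v_{\cdot,k}$ at neighboring points produces upper bounds on the increments and reduces everything to estimates on $I[v_{\cdot,k+1}]$ evaluated at arguments shifted by $h\alpha^*$, plus a contribution of $F$ controlled by \textbf{(H2)}.

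For (i) I would first establish a discrete Lipschitz bound for the grid function. Fixing grid points $x_i,x_j$, taking $\alpha^*$ optimal at $x_i$ and reusing it at $x_j$, and using that $F(\cdot,\mu(t_k))$ is $c_0$-Lipschitz by \textbf{(H2)}, one gets $|v_{i,k}-v_{j,k}|\le (L_{k+1}+c_0 h)\|x_i-x_j\|_1$, where $L_{k+1}$ denotes the Lipschitz constant of $I[v_{\cdot,k+1}]$ measured in the $\ell^1$ norm. The crucial point is to work coordinatewise: for the $\mathbb{P}_1$ basis $\{\beta_i\}$ built on $\|\cdot\|_1$, the operator $I$ is \emph{exactly} Lipschitz-constant preserving for that norm (on each simplex the partial derivative in direction $e_l$ equals a normalized increment of the grid values in that direction), so that $L_k\le L_{k+1}+c_0 h$. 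The growth is therefore additive; since $L_N\le c_0$ by \textbf{(H2)}, one obtains $L_k\le c_0(1+T)$ uniformly, and passing back to the Euclidean norm costs only the fixed factor $\sqrt d$. Tracking the correct norm is exactly what prevents the interpolation constant from compounding multiplicatively over the $N=T/h$ steps.

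For (ii) the inductive claim is that $I[v_{\cdot,k}]$ satisfies the stated inequality with a constant $c_6$ independent of $k$. At the terminal level $I[v_{\cdot,N}]=I[\widehat{G(\cdot,\mu(t_N))}]$, and the estimate follows from the $C^2$ bound on $G$ in \textbf{(H2)} together with the interpolation estimate \eqref{www}. For the inductive step I would, as for (i), reuse the optimizer to bound the second difference $v_{\rho,h}[\mu](x+y,t)-2v_{\rho,h}[\mu](x,t)+v_{\rho,h}[\mu](x-y,t)$ by the corresponding second difference of $I[v_{\cdot,k+1}]$ (at arguments shifted by $h\alpha^*$) plus $h\big(F(x+y)+F(x-y)-2F(x)\big)\le h c_0|y|^2$, which again gives additive growth $c_k\le c_{k+1}+c_0 h$ of the leading constant and hence uniformity. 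The generation of the defect terms comes from transferring the estimate through $I$: writing the relevant function as a concave piece plus $\tfrac{c}{2}|\cdot|^2$, one uses that $I[\widehat{|\cdot|^2}]-|\cdot|^2$ is an explicit nonnegative, bounded, $O(\rho^2)$ function that vanishes on $\mathcal{G}_\rho$, which is precisely the source of the profile $E$ and of the terms $E(x\pm y)$; in the second difference the base-point contribution of this profile carries the favorable sign $-2E(x)\le 0$ and drops out, which is what allows the clean form $E(x+y)+E(x-y)$ (with no base-point term) and matches the characterization of semiconcavity in Lemma \ref{equivdefsemicon}(ii).

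The main obstacle is exactly this interplay between interpolation and semiconcavity. The $\mathbb{P}_1$ interpolant of a semiconcave function is in general no longer semiconcave (interpolation can create convex kinks), so the work is to show that the defect it introduces is only $O(\rho^2)$, is concentrated away from the grid (captured by $E$), and — most importantly — that it does not inflate the leading semiconcavity constant at each of the $\approx T/h$ iterations. Controlling the interpolation of the concave remainder so that it contributes only such a grid-vanishing $O(\rho^2)$ defect, and absorbing the shift $h\alpha^*$ by the uniform bound on the minimizers, is the technical heart; the rest is the additive bookkeeping already used in the Lipschitz estimate.
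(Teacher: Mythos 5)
Your part (i) is correct and is essentially the paper's own (very terse) argument: the paper also propagates the Lipschitz constant backward through the scheme, starting from the $c_0$-Lipschitz interpolant of $G(\cdot,\mu(T))$ and adding $hc_0$ per step from $F$, to reach $c_0(1+T)$; your observation that the discrete Lipschitz constant should be measured in a norm for which the $\mathbb{P}_1$ interpolation is exactly constant-preserving is the right way to make that iteration airtight. For part (ii), note that the paper gives no proof at all: it cites \cite[Lemma 4.1]{AchdouCamilliCorrias11} and remarks that \eqref{semiconcavidaddebilcondosf} is a consequence of a \emph{discrete} semiconcavity property of the grid values $v_{i,k}$. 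You are therefore attempting more than the paper writes down, and this is where your argument has a genuine gap.

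The gap is the choice of inductive quantity. You propagate the interpolated inequality \eqref{semiconcavidaddebilcondosf} itself (arbitrary base point $x$, arbitrary increment $y$, with defect $\rho^2(E(x+y)+E(x-y))$). Two things go wrong. First, the dynamic programming relation \eqref{scheme-control}--\eqref{definicionesquema} holds only at grid points, so ``reusing the optimizer'' at a non-grid base point $x$ is not available as stated: $v_{\rho,h}[\mu](x,t)$ is a convex combination of infima, each with its own minimizer. Second, and decisively, even at grid base points the inductive step produces the second difference of $I[v_{\cdot,k+1}]$ at the \emph{shifted} base point $x-h\alpha^*$, and the inductive hypothesis there yields defects $\rho^2\bigl(E(x+y-h\alpha^*)+E(x-y-h\alpha^*)\bigr)$. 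Since $E$ vanishes on the grid while the shift $h\alpha^*$ is arbitrary, these cannot be dominated by $\rho^2(E(x+y)+E(x-y))$: take $x+y$ and $x-y$ to be grid points, so the target defect vanishes and $|y|\geq\rho/2$, while the shifted points sit where $E$ is of order $\|E\|_\infty$; then closing the step forces you to absorb $2c_{k+1}\rho^2\|E\|_\infty\leq 8c_{k+1}\|E\|_\infty|y|^2$ into the leading term, i.e. $c_k\geq c_{k+1}(1+8\|E\|_\infty)+c_0h$, and the constant blows up like $(1+8\|E\|_\infty)^{T/h}$ as $h\to 0$. This is precisely the ``inflation'' you yourself flag as the technical heart, but your proposal never supplies the mechanism preventing it; asserting the additive recursion $c_k\le c_{k+1}+c_0h$ does not make it true. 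The repair is to run the induction on the purely discrete inequality $v_{i+j,k}-2v_{i,k}+v_{i-j,k}\le C_k|x_j|^2$ (grid indices $i$, lattice increments $x_j$): because the mesh is invariant under lattice translations, the map $z\mapsto I[v_{\cdot,k+1}](z+x_j)-2I[v_{\cdot,k+1}](z)+I[v_{\cdot,k+1}](z-x_j)$ is piecewise affine with respect to the same mesh and hence attains its supremum at the vertices, i.e. at grid points, where it is a discrete second difference of $v_{\cdot,k+1}$. Thus no interpolation defect enters the recursion at all, $C_k\le C_{k+1}+c_0h$ holds genuinely, and the defect $\rho^2(E(x+y)+E(x-y))$ is created only \emph{once}, at the very end, by the concave-plus-quadratic decomposition you correctly describe, when passing from the discrete estimate on $v_{\cdot,k}$ to the interpolant at arbitrary $(x,y)$. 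With that restructuring your ingredients assemble into a proof; as written, the induction does not close.
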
\smallskip
\begin{proof}  By {\bf{(H2)}} we have  that $\| DG (\cdot, \mu(T))\|_{\infty} \leq c_{0}$  and so $I[G](\cdot, \mu(T))$ is $c_0$-Lipschitz. Thus, by  the \eqref{scheme-control} and \eqref{definicionecontinuadevhrho}, we get that    $v_{\rho,h}[\mu](\cdot,t_{N-1})$ is Lipschitz with constant $hc_{0} + c_{0}$. Iterating the argument, using  {\bf{(H2)}} for $F$, we get that $v_{\rho,h}[\mu](\cdot, t)$ is $c_0(1+T)$ Lipschitz for all $t\in [0,T]$.  The proof of the second assertion is provided e.g.  in \cite[Lemma 4.1]{AchdouCamilliCorrias11}. 
\end{proof}\smallskip
\begin{remark} Inequality \eqref{semiconcavidaddebilcondosf}  is a consequence of a {\it discrete}-semiconcavity property of $v_{i,k}$ (see e.g. \cite{Lintadmor01}).\end{remark}
\begin{theorem}\label{covergencefullydiscrete}  Let  $(\rho_{n}, h_{n})\to 0$ (as $n\uparrow \infty$) be such that $ \frac{\rho_{n}^{2}}{h_{n}} \to 0$. Then, for every sequence $\mu_{n}\in C([0,T]; \P_{1})$ such that $\mu_{n}\to \mu$ in $C([0,T]; \P_{1})$, we have that $v_{\rho_{n},h_{n}}[\mu_{n}]\to v[\mu]$ uniformly over compact sets.
%
\end{theorem}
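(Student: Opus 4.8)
The plan is to run the Barles--Souganidis programme for monotone, stable and consistent approximation schemes: the three structural ingredients are already supplied by Lemma \ref{propiedadesbasicasesquema} (stability, monotonicity \eqref{monotonia}, the constant--shift identity \eqref{ck} and the consistency \eqref{Consistenzadebole}), the uniform regularity by Lemma \ref{propiedadesdelavmuhrhocontinua}, and the conclusion will follow from the uniqueness of the viscosity solution of \eqref{hjbmu}. The role of the relation $\mu_n\to\mu$ is built into the consistency statement \eqref{Consistenzadebole}, so no extra work is needed to account for the varying data.

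First I would establish compactness of $\{v_{\rho_n,h_n}[\mu_n]\}_n$. By Lemma \ref{propiedadesbasicasesquema}{\rm(i)} the grid values are bounded by $c_5$ uniformly in $(n,i,k)$, hence the interpolants are uniformly bounded, and by Lemma \ref{propiedadesdelavmuhrhocontinua}{\rm(i)} they are Lipschitz in $x$ with constant independent of $(n,t)$. To control the time oscillation I would test the infimum in \eqref{definicionesquema} with $\alpha=0$, which gives $v_{i,k}\le v_{i,k+1}+h_n c_0$, and use the optimal control together with the uniform Lipschitz bound on $I[v_{\cdot,k+1}]$ and minimization in $\alpha$ to obtain the matching lower bound, so that $|v_{i,k}-v_{i,k+1}|\le C h_n$. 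Consequently $v_{\rho_n,h_n}[\mu_n]$ oscillates in time by at most $C(|t-t'|+h_n)$, and the half-relaxed limits
$$\overline v(x,t):=\limsup_{n\to\infty,\;(y,s)\to(x,t)} v_{\rho_n,h_n}[\mu_n](y,s),\qquad \underline v(x,t):=\liminf_{n\to\infty,\;(y,s)\to(x,t)} v_{\rho_n,h_n}[\mu_n](y,s)$$
are finite, satisfy $\underline v\le\overline v$, and are Lipschitz (hence continuous) on $\RR^d\times[0,T]$.

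The core of the argument is to show that $\overline v$ is a viscosity subsolution and $\underline v$ a viscosity supersolution of \eqref{hjbmu}. After the standard reduction to a strict local maximum, if $\phi\in C^\infty$ is such that $\overline v-\phi$ has a strict maximum at an interior point $(x_0,t_0)$ with $t_0\in[0,T)$, the upper half-relaxed convergence produces grid points $(x_{i_n},t_{k_n})\to(x_0,t_0)$ at which $v_{\rho_n,h_n}[\mu_n]-\phi$ attains a local maximum; using the scheme relation \eqref{scheme-control} together with the monotonicity \eqref{monotonia} and the shift identity \eqref{ck} to replace the discrete solution by $\phi$ up to the maximum value, one reaches
$$\phi(x_{i_n},t_{k_n})-S_{\rho_n,h_n}[\mu_n](\phi_{k_n+1},i_n,k_n)\le o(1),$$
and dividing by $h_n$ and passing to the limit via \eqref{Consistenzadebole} gives the subsolution inequality $-\partial_t\phi(x_0,t_0)+\tfrac12|D\phi(x_0,t_0)|^2-F(x_0,\mu(t_0))\le0$. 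The supersolution property of $\underline v$ follows symmetrically with a minimizing test function. The terminal slice is treated separately: since $v_{i,N}=G(x_i,\mu_n(t_N))$ with $G$ continuous and $\mu_n(T)\to\mu(T)$, the uniform convergence of the interpolant at $t=T$ yields $\overline v(\cdot,T)=\underline v(\cdot,T)=G(\cdot,\mu(T))$. Applying the comparison principle for \eqref{hjbmu} to this sub/supersolution pair gives $\overline v\le\underline v$, whence $\overline v=\underline v=:v$; the coincidence of the two half-relaxed limits is exactly the asserted local uniform convergence, and the common limit, being the unique viscosity solution of \eqref{hjbmu}, equals $v[\mu]$.

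I expect the consistency step to be the main obstacle. The interpolation operator $I$ inserts an error of order $\rho_n^2$ into \eqref{Consistenzadebole}, and it is precisely the hypothesis $\rho_n^2/h_n\to0$ that ensures this error, once divided by the time step $h_n$, vanishes; the delicate point is to verify, after localizing $\phi$ near the maximum point, that both the interpolation error and the time-discretization error are genuinely $o(h_n)$ uniformly. This in turn requires restricting the infimum in \eqref{definicionesquema} to a compact set of controls, which is licit because the quadratic term $\tfrac12 h|\alpha|^2$ forces the minimizers to be bounded by the uniform Lipschitz constant of the interpolant; controlling these intertwined error terms is where the bulk of the technical effort lies.
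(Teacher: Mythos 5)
Your overall strategy (half-relaxed limits, sub/supersolution property via monotonicity, the shift identity and consistency, then comparison) is exactly the Barles--Souganidis route the paper follows, but there is a genuine gap at the crucial step, and it is precisely the point you defer to your final paragraph. You assert that the upper relaxed limit ``produces grid points $(x_{i_n},t_{k_n})\to(x_0,t_0)$ at which $v_{\rho_n,h_n}[\mu_n]-\phi$ attains a local maximum.'' This is false: $v_{\rho_n,h_n}[\mu_n]$ is piecewise linear in $x$ and piecewise constant in $t$, so the maximum of $v_{\rho_n,h_n}[\mu_n]-\phi$ is in general attained at a point $(y_n,s_n)$ that is neither a space node nor a time node (in space, take $\phi$ strictly convex on a cell with $D\phi$ crossing the slope of the interpolant; in time, the maximizer over $[t_k,t_{k+1})$ of a constant minus $\phi(y_n,\cdot)$ is an interior critical point of $\phi(y_n,\cdot)$ whenever one exists). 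Nor can this be repaired by moving to the nearest grid point: the inequality $v_{\rho_n,h_n}[\mu_n]\le\phi+\xi_n$ then only holds with $\xi_n$ replaced by $\xi_n+O(\rho_n+h_n)$, and after running monotonicity and dividing by $h_n$ this contributes $O(\rho_n/h_n+1)$, which does not vanish (the hypothesis is $\rho_n^2/h_n\to 0$, not $\rho_n/h_n\to 0$). So the chain ``monotonicity plus shift identity at the grid maximum'' cannot be run as you describe, and the bound $\le o(1)$ you reach is in any case too weak: as you yourself note, dividing by $h_n$ requires $o(h_n)$, and your proposal does not supply that estimate.

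The paper closes exactly this gap with two devices missing from your argument. In space, it never relocates the maximum point: it evaluates $v\le\phi+\xi_n$ at all nodes $(x_i,t_{k(n)+1})$, applies monotonicity and the shift identity nodewise, and then takes the convex combination with weights $\beta_i(y_n)$, using that $v_{\rho_n,h_n}[\mu_n](y_n,s_n)=\sum_i\beta_i(y_n)v_{i,k(n)}$ by the very definition of the interpolant; with this the $\xi_n$'s cancel exactly, leaving \eqref{eq:2}. In time, it observes that if $s_n\in(t_{k(n)},t_{k(n)+1})$ then, since the scheme is constant in time on that interval and $(v_{\rho_n,h_n}[\mu_n]-\phi)(y_n,\cdot)$ is maximal at $s_n$, necessarily $\partial_t\phi(y_n,s_n)=0$, whence $\phi(y_n,s_n)=\phi(y_n,t_{k(n)})+O(h_n^2)$. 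Combined with the interpolation estimate \eqref{www} applied to the smooth test function (error $O(\rho_n^2)=o(h_n)$ precisely by the hypothesis $\rho_n^2/h_n\to 0$), this produces the required $o(h_n)$ bound before dividing by $h_n$ and invoking consistency. Your compactness and time-regularity preliminaries and your treatment of the terminal condition are reasonable but do not substitute for these two steps.
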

\begin{proof}  Using assumption {\bf(H1)}, the proof  is a straightforward variation of the one in \cite{CFF10}, which is a revised proof of the result given in \cite{BS91}. However, for the sake of completeness we provide the details. 
%
For $(y,s)\in \RR^{d}\times[0,T]$,   set
$$ v^{*}(y,s):= \underset{n\to \infty} {\limsup_{(y',s')\to (y,s)}}  v_{\rho_{n},h_{n}}[\mu_{n}](y', s'), \hspace{0.5cm}  v_{*}(y,s):= \underset{n\to \infty}{\liminf_{(y',s')\to (y,s)}} v_{\rho_{n},h_{n}}[\mu_{n}](y', s').
$$
Let us prove that $v^{*}$ is a viscosity subsolution of
\be\label{ecuacioninetermediaria}\ba{rcl} -\partial_{t} v(x,t) + \half |D v(x,t)|^{2}& =& F(x, \mu(t)) \quad \mbox{for } (x,t) \in \RR^{d} \times (0,T), \\[6pt]
								v(x,T) &=&  G(x, \mu(T)) \quad \mbox{for } x \in \RR^{d}.\ea	
\ee
Let $(\bar{y}, \bar{s}) \in \RR^{d}\times (0,T)$  and $\phi \in C^{1}(\RR^{d}\times (0,T))$ be such that $v^{*}(\bar{y}, \bar{s})=\phi(\bar{y}, \bar{s})$ and $v^{*}-\phi$ has a global strict maximum    at  $(\bar{y},\bar{s})$. Since $v^*(\cdot, \cdot)$ is upper semicontinuous,   a standard argument in the theory of viscosity solutions implies that, up to some subsequence, there exists   $(y_{n},s_{n})\to (\bar{y},\bar{s})$, such that \small
$$(v_{\rho_{n},h_{n}}[\mu_{n}]-\phi)(y_{n}, s_{n})= \max_{(y,s) \in \RR^{d}\times (0,T)} (v_{\rho_{n},h_{n}}[\mu_{n}]-\phi)(y,s)$$ $$\hspace{0.3cm} \mbox{and } \hspace{0.2cm} (v_{\rho_{n},h_{n}}[\mu_{n}]-\phi)(y_{n}, s_{n}) \to (v^{*}-\phi)(\bar{y}, \bar{s})= 0. $$\normalsize
%
Thus, for any $(y,s)\in  \RR^{d}\times [0,T]$  we have that
\begin{equation}\label{eq:1}
  v_{\rho_{n},h_{n}}[\mu_{n}](y,s) \leq \phi(y,s) +\xi_n, \hspace{0.4cm} \mbox{with } \hspace{0.4cm} \xi_n :=  (v_{\rho_{n},h_{n}}[\mu_{n}]-\phi)(y_{n},s_{n}) \to 0.
\end{equation}
 Let  $k:= \NN \to \{0,\hdots, N-1\}$   be  such that $ s_{n} \in   [t_{k(n)}, t_{k(n)+1})$.  Evidently, we have that   $t_{k(n)} \to \bar{s}$.  By taking $y=x_{i}$, $i\in \ZZ^d$, and $s=t_{k(n)+1}$ in  \eqref{eq:1},  we get that 
\begin{equation}\label{eq:2a}
  v_{i,  k(n)+1}  \leq \phi(x_{i}, t_{k(n)+1}) +\xi_n \hspace{0.4cm} \mbox{for all } i\in \ZZ^{d}.
\end{equation}
Lemma \ref{propiedadesbasicasesquema}{\rm(ii)}-{\rm(iii)} implies that 
$$
S_{\rho_{n},h_{n}}[\mu_{n}](v_{\cdot,k(n)+1},i, k(n))\leq S_{\rho_{n},h_{n}}[\mu_{n}](\phi_{k(n)+1},i, k(n))  +\xi_n \hspace{0.4cm} \mbox{for all } i\in \ZZ^{d}.
$$ 
In particular,  using \eqref{scheme-control}, we get
$$
 v_{i ,k(n)}  \leq  S_{\rho_{n},h_{n}}[\mu_{n}](\phi_{k(n)+1}, i, k(n))  +\xi_n \hspace{0.4cm} \mbox{for all } i\in \ZZ^{d},
$$ 
which yields, by the definition of  $v_{\rho_{n},h_{n}}[\mu_{n}](y_{n},  s_n)$ in \eqref{definicionecontinuadevhrho}, 
$$
 v_{\rho_{n},h_{n}}[\mu_{n}](y_{n}, s_n) \leq \sum_{i \in \ZZ^{d}} \beta_{i}(y_{n}) S_{\rho_{n},h_{n}}[\mu_{n}](\phi_{k(n)+1},i, k(n))  +\xi_n.
$$ 
Now, recalling the definition of $\xi_n$,   we get
\begin{equation} \label{eq:2}
\phi(y_n,s_{n}) \leq \sum_{i \in \ZZ^{d}} \beta_{i}(y_{n}) S_{\rho_{n},h_{n}}[\mu_{n}](\phi_{k(n)+1},i, k(n)).
\end{equation}
We claim now that $\phi(y_{n},s_{n})=\phi(y_{n}, t_{k(n)})+O(h_{n}^2)$.
In fact, either $s_{n}=t_{k(n)}$ (and the claim obviously holds), or $s_n \in (t_{k(n)}, t_{k(n)+1})$. In the latter case, since $(v_{\rho_{n},h_{n}}-\phi)(y_{n},\cdot)$ has a maximum at $s_{n}$ and $v_{\rho_{n},h_{n}}$
 is constant in $(t_{k(n)}, t_{k(n)+1})$, then $\partial_t \phi(y_{n},s_{n})=0$ and   the claim follows from a Taylor expansion. Thus, by our  claim and \eqref{eq:2}, we have that
\begin{equation}\label{zzz}
\phi(y_{n},t_{k(n)}) \leq \sum_{i \in \ZZ^{d}} \beta_{i}(y_{n}) S_{\rho_{n},h_{n}}[\mu_{n}](\phi_{k(n)+1},i, k(n)) +o(h_{n}).
\end{equation}
Now, inequality \eqref{zzz}, estimate \eqref{www} and the fact that $\rho_{n}^{2}/h_{n} \to 0$ imply that
\begin{equation*}
 \lim_{n\to \infty} \sum_{i\in \ZZ^{d}} \beta_{i}(y_{n})
 \frac{\phi(x_{i},t_{k(n)})-S_{\rho_{n},h_{n}}[\mu_{n}](\phi_{k(n)+1},i, k(n))}{h_{n}} \leq 0.
\end{equation*}
Finally, by the consistency property in Lemma \ref{propiedadesbasicasesquema}{\rm(iv)}  we obtain that
\begin{equation*}
-\partial_t \phi(\bar{y},\bar{s})+\frac{1}{2} |D \phi (\bar{y},\bar{s})|^2-F(\bar{y},\mu(\bar{s})) \leq 0,
\end{equation*}
which implies that $v^{*}$  is a subsolution of \eqref{eq:1}. The supersolution property for $v_{*}$ can be proved in a similar manner. Therefore, by a classical comparison argument,  $v_{\rho_{n},h_{n}}[\mu_{n}]$ converges locally uniformly to $v[\mu]$ in $\RR^{d} \times(0,T)$.
\end{proof}\medskip

 Let $\rho \in C^{\infty}_{c}(\RR^{d})$ with $\rho\geq 0$ and $\int_{\RR^{d}} \rho(x) \dd x=1$.  For $\eps>0$, we consider the {\it mollifier}  $\rho_{\eps}(x):= \frac{1}{\eps^{d}} \rho\left(\frac{x}{\eps}\right)$
and define 
\be\label{definicionregularizada} v^{\eps}_{\rho,h}[\mu](\cdot,t):= \rho_{\eps} \ast  v_{\rho,h}[\mu](\cdot,t) \hspace{0.5cm} \mbox{for all } \hspace{0.2cm} t\in [0,T].
\ee
Using Lemma \ref{propiedadesdelavmuhrhocontinua}{\rm(i)}  we easily check  the   estimates
\be\label{aproximacionuniformedelaconvolucion}\ba{rcl} \| v^{\eps}_{\rho,h}[\mu](\cdot,\cdot)-v_{\rho,h}[\mu](\cdot,\cdot)\|_{\infty} &=& \gamma \eps, \\[6pt]
											 \| D^{\alpha}v^{\eps}_{\rho,h}[\mu](\cdot,\cdot)\|_{\infty} &=& c_{\alpha}\eps^{1-|\alpha|} \ea
\ee
where $\gamma>0$ is independent of  $(\eps,\rho,h,\mu,t)$, $\alpha$ is a multiindex with $|\alpha|>0$ and $c_{\alpha}>0$ depends only on $\alpha$. We have: \smallskip
\begin{lemma}\label{propiedadesdelavmuhrhocontinuaconeps} For every $t\in [0,T]$, the following assertions hold true:\smallskip\\
{\rm(i)} {\rm [Lipschitz property]} The function  $v^{\eps}_{\rho,h}[\mu](\cdot,t)$ is Lipschitz with constant  $  d_{0}$ independent of $(\rho,h,\mu,t)$.  \\[4pt]
{\rm(ii)} {\rm [Semiconcavity]} There exists   $d_{1}>0$  independent of $(\rho,h,\eps,\mu,t)$, such that
\be\label{mamdmmmdaaaassss} \langle D^{2}v^{\eps}_{\rho,h}[\mu](x,t)y, y \rangle \leq d_1\left(1 + \frac{\rho^{2}}{\eps^3}\right)|y|^{2} \hspace{0.5cm} \forall \; x,y \in \RR^{d}.\ee 
\end{lemma}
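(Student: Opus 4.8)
The plan is to transfer the two properties of $w_0 := v_{\rho,h}[\mu](\cdot,t)$ to its mollification $w := v^\eps_{\rho,h}[\mu](\cdot,t)=\rho_\eps\ast w_0$ from \eqref{definicionregularizada}, using throughout that $\rho_\eps\geq0$ and $\int_{\RR^d}\rho_\eps=1$. Assertion (i) is immediate, since convolution against a probability density preserves the Lipschitz constant: by Lemma \ref{propiedadesdelavmuhrhocontinua}(i) the function $w_0$ is $L$-Lipschitz with $L=c_0(1+T)$ independent of $(\rho,h,\mu,t)$, and writing $w(x)-w(x')=\int_{\RR^d}\rho_\eps(z)[w_0(x-z)-w_0(x'-z)]\,dz$ gives $|w(x)-w(x')|\leq L|x-x'|$, so $d_0=L$ works (independently of $\eps$ as well).

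For assertion (ii) I would first convolve the weak semiconcavity \eqref{semiconcavidaddebilcondosf}: applying it at centre $x-z$ with increment $y$, multiplying by $\rho_\eps(z)\geq0$, integrating in $z$ and bounding $E\leq\|E\|_\infty$, one obtains for all $x,y\in\RR^d$
$$w(x+y)-2w(x)+w(x-y)\leq c_6|y|^2+2c_6\|E\|_\infty\,\rho^2.$$
The obstruction is already visible here: the right-hand side carries the $y$-independent defect $2c_6\|E\|_\infty\rho^2$, coming from the off-grid error term in \eqref{semiconcavidaddebilcondosf}, so this finite-difference estimate is vacuous if one lets $y\to0$ in order to recover the Hessian.

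To extract a genuine bound on $D^2w$ I would use that $w\in C^\infty$ (a convolution of a bounded function with $\rho_\eps\in C^\infty_c$) and compare the second difference with the Hessian through a central Taylor expansion, whose odd-order terms cancel:
$$\langle D^2 w(x)y,y\rangle\leq\big[w(x+y)-2w(x)+w(x-y)\big]+\tfrac{1}{12}\|D^4 w\|_\infty|y|^4.$$
Inserting the inequality above together with the estimate \eqref{aproximacionuniformedelaconvolucion} for $|\alpha|=4$, i.e. $\|D^4 w\|_\infty\leq c\,\eps^{-3}$, and then writing $y=re$ with $|e|=1$ and dividing by $r^2$ gives
$$\langle D^2 w(x)e,e\rangle\leq c_6+\frac{2c_6\|E\|_\infty\,\rho^2}{r^2}+\frac{c}{12}\,\eps^{-3}r^2.$$
Choosing the increment at the grid scale $r=\rho$ balances the two error terms: the middle one becomes $2c_6\|E\|_\infty=O(1)$ and the last one becomes $O(\rho^2/\eps^3)$, yielding $\langle D^2 w(x)e,e\rangle\leq d_1(1+\rho^2/\eps^3)$ with $d_1$ depending only on $c_6$, $\|E\|_\infty$ and the fixed kernel; homogeneity in $y$ then produces \eqref{mamdmmmdaaaassss}.

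The heart of the argument, and its only real difficulty, is this balancing. The weak semiconcavity controls second differences only up to an $O(\rho^2)$ off-grid defect, so one cannot pass to the Hessian limit directly; instead one must trade that defect against the $O(\eps^{-3})$ growth of the fourth derivative of the mollified function, and it is precisely the near-optimal choice $r=\rho$ that turns an a priori useless finite-difference inequality into the desired one-sided estimate on $D^2 w$.
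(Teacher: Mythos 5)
Your proposal is correct and follows essentially the same route as the paper: part (i) by the Lipschitz-preserving property of mollification, and part (ii) by combining the (mollified) weak semiconcavity bound with a fourth-order central Taylor expansion using $\|D^{4}v^{\eps}_{\rho,h}[\mu]\|_{\infty}=O(\eps^{-3})$ from \eqref{aproximacionuniformedelaconvolucion}, then evaluating the second difference at an increment of length exactly $\rho$ (your $r=\rho$ is precisely the paper's choice $\tau=\rho/|y|$, $y'=\tau y$) to balance the off-grid defect against the fourth-derivative error. The balancing step you single out as the heart of the argument is indeed the key point of the paper's proof as well.
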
\smallskip
\begin{proof} Assertion {\rm(i)}  follows directly from the definition of $v^{\eps}_{\rho,h}[\mu](\cdot,t)$  and the corresponding result for $v_{\rho,h}[\mu](\cdot,t)$  in  Lemma  \ref{propiedadesdelavmuhrhocontinua}. Now, let us prove assertion {\rm(ii)}.  If $y=0$ the result is true so let us assume that $y\neq 0$ and set $\tau= \rho/|y|$.   Assertion {\rm(ii)} in Lemma  \ref{propiedadesdelavmuhrhocontinua}  implies the existence of $c_{7}>0$ (independent of $(\rho,h,\eps,\mu,t)$)  such that  
$$v^{\eps}_{\rho,h}[\mu](x+y',t)  - 2v^{\eps}_{\rho,h}[\mu](x,t) +v^{\eps}_{\rho,h}[\mu](x-y',t) \leq c_{7} \left(|y'|^2+\rho^2\right) \hspace{0.3cm} \forall \; y' \in \RR^{n}.$$
Setting $y'= \tau y $, we obtain   that 
\be\label{semiconcavidaddebilcondossdf} v^{\eps}_{\rho,h}[\mu](x+\tau y,t)  - 2v^{\eps}_{\rho,h}[\mu](x,t) +v^{\eps}_{\rho,h}[\mu](x-\tau y,t) \leq 2c_{7}|\tau|^{2} |y|^2.
\ee   
On the other hand,  by a Taylor expansion and taking $|\alpha|=4$ in the the second estimate in \eqref{aproximacionuniformedelaconvolucion}, we get, using the multiindex notation,  \small
$$\ba{rcl} v^{\eps}_{\rho,h}[\mu](x+\tau y,t) &\geq& v^{\eps}_{\rho,h}[\mu](x,t)+ \langle Dv^{\eps}_{\rho,h}[\mu](x,t), \tau y \rangle+ \half \langle D^{2}v^{\eps}_{\rho,h}[\mu](x,t)\tau y, \tau y \rangle \\[4pt]
\; & \; & + \sum_{|\alpha|=3} \frac{1}{\alpha ! } D^{\alpha}v^{\eps}_{\rho,h}[\mu](x,t)(\tau y)^{\alpha}  -\frac{1}{\eps^3}c'  |\tau y |^{4}, \\[4pt]
		 v^{\eps}_{\rho,h}[\mu](x-\tau y,t) &\geq& v^{\eps}_{\rho,h}[\mu](x,t)- \langle Dv^{\eps}_{\rho,h}[\mu](x,t), \tau y \rangle+ \half \langle D^{2}v^{\eps}_{\rho,h}[\mu](x,t)\tau y, \tau y \rangle\\[4pt]
\; & \; &- \sum_{|\alpha|=3} \frac{1}{\alpha ! } D^{\alpha}v^{\eps}_{\rho,h}[\mu](x,t)(\tau y)^{\alpha} - \frac{1}{\eps^{3}}c' |\tau y |^{4},\ea$$ \normalsize
where $c'>0$ is independent of $(\rho,h,\eps,\mu,t)$. Adding both inequalities, using \eqref{semiconcavidaddebilcondosf} and that $|\tau y | =\rho$, we get
$$ \langle D^{2}v^{\eps}_{\rho,h}[\mu](x,t)\tau y, \tau y \rangle \leq  \left(2c_{7}+ 2c'\frac{\rho^{2}}{\eps^{3}}\right)\tau^{2} |y |^{2}.$$
Dividing by $\tau^{2}$ and taking $d_{1}= \max\{ 2c_{7}, 2c' \}$ we get the result. 
\end{proof} \smallskip

As a consequence we obtain \smallskip
\begin{theorem}\label{covergencefullydiscreteconeps}  Let  $(\rho_{n}, h_{n},\eps_{n})\to  0$  be such that $ \frac{\rho_{n}^{2}}{h_{n}} \to 0$ and $\small \rho_{n}=O(\eps_{n}^{3/2})$\normalsize. Then, for every sequence $\mu_{n}\in C([0,T]; \P_{1})$ such that $\mu_{n}\to \mu$ in $C([0,T]; \P_{1})$, we have that $v^{\eps_{n}}_{\rho_{n},h_{n}}[\mu_{n}]\to v[\mu]$ uniformly over compact sets and $Dv^{\eps_{n}}_{\rho_{n},h_{n}}[\mu_{n}](x,t)\to Dv[\mu](x,t)$ at every $(x,t)$ such that $Dv[\mu](x,t)$ exists.
%
\end{theorem}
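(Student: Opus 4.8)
The plan is to split the claim into its two parts and to dispatch the (easy) uniform convergence first, reserving the uniform semiconcavity of the mollified iterates for the (harder) gradient convergence. The observation powering the whole argument is that the hypothesis $\rho_n = O(\eps_n^{3/2})$ forces $\rho_n^2/\eps_n^3$ to stay bounded, say by a constant $C$, so that the right-hand side of the semiconcavity estimate \eqref{mamdmmmdaaaassss} in Lemma \ref{propiedadesdelavmuhrhocontinuaconeps}{\rm(ii)} is controlled by $C_{conc} := d_1(1+C)$, a constant independent of $n$. Hence the functions $v^{\eps_n}_{\rho_n,h_n}[\mu_n](\cdot,t)$ are semiconcave in $x$ with one common constant, for every $t$.

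For the uniform convergence I would combine the triangle inequality with the two ingredients already in hand. On any compact set one bounds $\|v^{\eps_n}_{\rho_n,h_n}[\mu_n]-v[\mu]\|_\infty$ by $\|v^{\eps_n}_{\rho_n,h_n}[\mu_n]-v_{\rho_n,h_n}[\mu_n]\|_\infty + \|v_{\rho_n,h_n}[\mu_n]-v[\mu]\|_\infty$. The first term equals $\gamma\eps_n \to 0$ by the first estimate in \eqref{aproximacionuniformedelaconvolucion}, and the second tends to zero by Theorem \ref{covergencefullydiscrete}, whose hypothesis $\rho_n^2/h_n \to 0$ is assumed. This gives the first assertion, and in particular locally uniform convergence in $x$ for each fixed $t$.

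For the gradient convergence, fix a time $t$ and a point $x$ at which $Dv[\mu](\cdot,t)$ exists, and write $w_n := v^{\eps_n}_{\rho_n,h_n}[\mu_n](\cdot,t)$. Each $w_n$ is smooth with $D^2 w_n \le C_{conc} I_d$ by the uniform semiconcavity above, so integrating along segments yields the quadratic upper bound $w_n(z) \le w_n(x) + \langle Dw_n(x), z-x\rangle + \tfrac{C_{conc}}{2}|z-x|^2$ for all $z$. The gradients $Dw_n(x)$ are bounded by the uniform Lipschitz constant $d_0$ of Lemma \ref{propiedadesdelavmuhrhocontinuaconeps}{\rm(i)}, so from any subsequence I would extract a further subsequence with $Dw_{n_k}(x) \to p$; passing to the limit in the quadratic bound, using the locally uniform convergence $w_n \to v[\mu](\cdot,t)$ from the first part, gives $v[\mu](z,t) \le v[\mu](x,t) + \langle p, z-x\rangle + \tfrac{C_{conc}}{2}|z-x|^2$ for all $z$, that is $p \in D^+ v[\mu](x,t)$. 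Since $v[\mu](\cdot,t)$ is differentiable at $x$, its superdifferential reduces to the singleton $\{Dv[\mu](x,t)\}$, forcing $p = Dv[\mu](x,t)$; as every subsequence admits a sub-subsequence with this same limit, the full sequence converges, $Dw_n(x) \to Dv[\mu](x,t)$.

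The main obstacle is precisely this last step. Lemma \ref{propsemiconcgenerica1}{\rm(ii)} only delivers a.e. convergence of the gradients, whereas the statement demands convergence at \emph{every} point of differentiability of $v[\mu]$; upgrading to this requires the stability of the superdifferential under locally uniform limits of uniformly semiconcave functions (the quadratic-bound argument above) together with the collapse of $D^+$ to a single point at differentiability points. The delicate quantitative heart of the matter is the balance between the mollification scale $\eps_n$ and the mesh $\rho_n$: too little smoothing, $\rho_n/\eps_n^{3/2} \to \infty$, would let the constant in \eqref{mamdmmmdaaaassss} blow up and destroy the uniform second-order bound on which the gradient argument rests.
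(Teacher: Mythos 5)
Your proposal is correct and follows essentially the same route as the paper: the uniform convergence via the triangle inequality combining the mollification estimate \eqref{aproximacionuniformedelaconvolucion} with Theorem \ref{covergencefullydiscrete}, and the gradient convergence via the uniform semiconcavity bound \eqref{mamdmmmdaaaassss} (using $\rho_n^2/\eps_n^3 \leq C$), boundedness of the gradients, and identification of every limit point as an element of $D^{+}v[\mu](x,t)$, which collapses to $\{Dv[\mu](x,t)\}$ at points of differentiability. Your explicit subsequence/sub-subsequence argument and the quadratic bound obtained by integrating $D^2 w_n \leq C_{conc} I_d$ along segments merely spell out steps the paper leaves implicit.
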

\begin{proof} The first assertion is a consequence of Theorem \ref{covergencefullydiscrete} and the uniform estimate \eqref{aproximacionuniformedelaconvolucion}.  Next, fix $x, y \in \RR^{d}$. Then, since $\rho_{n}^{2}/\eps^{3}_{n} \leq C$ for some $C>0$  (independent of $n$),  inequality \eqref{mamdmmmdaaaassss} implies the existence of $C'>0$ (independent of $n$) such that
\be\label{aadamammsssssaaa}v^{\eps_{n}}_{\rho_{n},h_{n}}[\mu_{n}](y,t)- v^{\eps_{n}}_{\rho_{n},h_{n}}[\mu_{n}](x,t)- \langle Dv^{\eps_{n}}_{\rho_{n},h_{n}}[\mu_{n}](x,t), y-x \rangle \leq C' |y-x|^{2}. \ee
On the other hand, Lemma \ref{propiedadesdelavmuhrhocontinuaconeps}{\rm(i)}  implies  that $Dv^{\eps_{n}}_{\rho_{n},h_{n}}[\mu_{n}](x,t)$ is uniformly bounded in $n$.
Thus, passing to the limit in the above inequality, every limit point $p$ of $Dv^{\eps_{n}}_{\rho_{n},h_{n}}[\mu_{n}](x,t)$ satisfies
$$ v[\mu](y,t)- v[\mu](y,t)- \langle p, y-x \rangle \leq C' |y-x|^{2} \hspace{0.5cm} \mbox{for all } \hspace{0.2cm} x,y \in \RR^{d}.$$
The above inequality implies that $p \in D^{+}v[\mu](x,t)$ and thus, if $Dv[\mu](x,t)$ exists, the semiconcavity of $v[\mu]$ implies that $p= Dv[\mu](x,t)$ from which the result follows. 
\end{proof}
\subsection{The fully-discrete scheme for the continuity equation}
 Given $\mu \in  C([0,T]; \P_{1})$ and $\eps>0$ let us define 
\be\label{flujofullydiscretounpaso} \Phi^{\eps}_{i,k,k+1}[\mu]:=x_{i}- h \hat{\alpha}^{\eps}_{i,k}[\mu] \hspace{0.4cm} \mbox{for all } i \in \ZZ^{d}, \hspace{0.2cm} k=0,\hdots, N-1, 
\ee
where $\hat{\alpha}^{\eps}_{i,k}:=\hat{\alpha}^{\eps}_{\rho,h}[\mu](x_i, t_k)$ and $\hat{\alpha}^{\eps}_{\rho,h}[\mu]:Ê\RR^{d} \times [0,T] \to \RR^{d}$ is defined as 
\be \label{defalphaeps}
 \hat{\alpha}_{\rho,h}^{\eps}[\mu](x, t):= Dv_{\rho,h}^{\eps}[\mu](x,t). 
\ee

Given the   family  $\{ \Phi_{i,k,k+1}^{\eps}[\mu] \; ; \; i \in \ZZ^{d}, \; k=0, \hdots, N-1\}$,  we now consider  a fully-discrete scheme for  \eqref{eqcontimu} which turns out to be equivalent to the one proposed  \cite{PiccoliTosin}, under some slight change of notation. Let us define  
$$ \mathcal{S}:= \left\{   z= (z_i)_{i\in \Zd} \ ; \ z_{i}\in \RR_{+} \hspace{0.2cm} \mbox{and }    \sum_{i\in \ZZ^d} z_{i} = 1\right\}.$$
 The coordinates of   $m \in \mathcal{S}_{N+1}:=\{ \nu= (\nu_{i})_{k=0}^{N} \; ; \; \nu_{k} \in  \mathcal{S}\}$  are denoted as $m_{i, k}$, with $i \in \ZZ^d$ and $k=0,...,N$.   We set
 $$E_{i}:=  [x_{i}\pm  \half \rho e_{1}] \times ... \times  [x_{i}\pm \half \rho e_{d}] \hspace{0.5cm} \mbox{for all $i\in \ZZ^{d}$},$$
%
and define $m^{\eps}[\mu] \in \mathcal{S}_{N+1}$ recursively as  \small
\be\label{defmconmu}\ba{rcl} m_{i,k+1}^{\eps}[\mu]  &:=& \sum_{j\in \ZZ^{d}} \beta_{i}\left( \Phi^{\eps}_{j,k,k+1}[\mu]\right) m_{j,k}^{\eps}[\mu], \hspace{0.4cm} \mbox{for   } i \in \ZZ^{d}, \hspace{0.2cm}Êk=0,\hdots, N-1,\\[6pt]
						m_{i,0}^{\eps}[\mu]  &:=& \int_{E_{i}} m_{0}(x) \dd x, \hspace{0.4cm} \mbox{for } i \in \ZZ^{d}.\ea\ee \normalsize \smallskip
\begin{remark}
Note that, omitting the dependence in $\mu$, for $k=0, \hdots, N-1$ we have that \small
$$ \sum_{i \in \ZZ^{d}} m_{i,k+1}^{\eps} = \sum_{i\in \ZZ^{d}} \sum_{j \in \ZZ^{d}}  \beta_{i}\left( \Phi^{\eps}_{j,k,k+1} \right) m_{j,k}^{\eps}=  \sum_{j \in \ZZ^{d}} m_{j,k}^{\eps}  \sum_{i\in \ZZ^{d}} \beta_{i}\left( \Phi^{\eps}_{j,k,k+1} \right) =  \sum_{j \in \ZZ^{d}} m_{j,k}^{\eps}=1,   $$
\normalsize
because $\sum_{j\in \ZZ^{d}} m_{j,0}^{\eps}=1.$ Therefore, the scheme \eqref{defmconmu} {\it is conservative}.  \end{remark} \smallskip

 Let us define $m^{\eps}_{\rho,h}[\mu] \in L^{\infty}(\RR^{d}\times [0,T])$     as
\small
\be \label{defmisuradepdemu}
\ba{ll}m_{\rho,h}^{\eps}[\mu](x,t)&:= \frac{1}{\rho^{d}}\left[\frac{t_{k+1}-t}{h}\sum_{i \in \ZZ^d}m_{i,k}^{\eps}[\mu] \mathbb{I}_{E_i}(x)+\frac{t-t_{k}}{h}\sum_{i \in \ZZ^d}m_{i,k+1}^{\eps}[\mu]  \mathbb{I}_{E_i}(x)\right],\\[6pt]
						\ & \hspace{0.4cm} \;  \mbox{if $t\in [t_{k}, t_{k+1})$.} \ea\ee
\normalsize
Therefore, for every $t\in [t_{k}, t_{k+1})$ we have \small
\be \label{defmisuradepdemuotraversion} m_{\rho,h}^{\eps}[\mu](x,t):=  \left(\frac{t_{k+1}-t}{h}\right)m_{\rho,h}^{\eps}[\mu](x,t_{k}) +\left(\frac{t-t_{k}}{h}\right)m_{\rho,h}^{\eps}[\mu](x,t_{k+1}). \ee
\normalsize


By abuse of notation, we continue to write   $m^{\eps}_{\rho,h}[\mu](t)$ for the probability measure in $\RR^{d}$ whose density is given by \eqref{defmisuradepdemu}. Thus, by the very definition, we can identify $m_{\rho,h}^{\eps}[\mu](\cdot, \cdot)\in L^{\infty}(\RR^{d}\times [0,T])$ with an element $m^{\eps}_{\rho,h}[\mu](\cdot) \in C([0,T];\P_1)$. 
 
We now study some technical properties of the family  $\{ \Phi_{i,k,k+1}^{\eps}[\mu] \; ; \; i \in \ZZ^{d}, \; k=0, \hdots, N-1\}$. The next result is an easy consequence of Lemma \ref{propiedadesdelavmuhrhocontinuaconeps}. \smallskip
\begin{proposition}{\label{Discretetrajectoriesprop}}For any $i, j \in \ZZ^{d}$ and $k=0,\hdots, N-1$, we have 
\be\label{dis:discretetraj}
|\Phi^{\eps}_{i,k,k+1}[\mu]-\Phi^{\eps}_{j,k,k+1}[\mu]|^{2}\geq \left(1-d_{2} h\left(1+ \frac{\rho^{2}}{\eps^{3}}\right)\right)|x_i-x_j|^2,
\ee
where $ d_2\geq 0$ is   independent of  $(\rho,h,\eps,\mu)$.
\end{proposition}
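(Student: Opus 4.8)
The plan is to expand the squared distance directly from the definitions and then absorb the resulting cross term into the semiconcavity estimate already established in Lemma \ref{propiedadesdelavmuhrhocontinuaconeps}{\rm(ii)}. To fix notation, write $w(\cdot):=v^{\eps}_{\rho,h}[\mu](\cdot,t_k)$, so that by \eqref{flujofullydiscretounpaso} and \eqref{defalphaeps} one has $\Phi^{\eps}_{i,k,k+1}[\mu]=x_i-h\,Dw(x_i)$ and likewise for the index $j$. Subtracting and taking squared norms,
\[
|\Phi^{\eps}_{i,k,k+1}[\mu]-\Phi^{\eps}_{j,k,k+1}[\mu]|^2 = |x_i-x_j|^2 -2h\langle x_i-x_j,\,Dw(x_i)-Dw(x_j)\rangle + h^2|Dw(x_i)-Dw(x_j)|^2.
\]

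Next I would bound the middle term. Since $v^{\eps}_{\rho,h}[\mu](\cdot,t_k)$ is $C^{\infty}$, being a mollification of $v_{\rho,h}[\mu](\cdot,t_k)$, its superdifferential at each point reduces to the singleton containing its gradient; moreover Lemma \ref{propiedadesdelavmuhrhocontinuaconeps}{\rm(ii)} combined with the characterization in Lemma \ref{equivdefsemicon}{\rm(iv)} shows that $w$ is semiconcave with constant $C_{conc}:=d_1\bigl(1+\rho^2/\eps^3\bigr)$. Applying the monotonicity property Lemma \ref{equivdefsemicon}{\rm(iii)} with $p=Dw(x_j)$ and $q=Dw(x_i)$ (equivalently, integrating the Hessian bound of Lemma \ref{propiedadesdelavmuhrhocontinuaconeps}{\rm(ii)} along the segment joining $x_j$ to $x_i$ by the fundamental theorem of calculus) gives
\[
\langle x_i-x_j,\,Dw(x_i)-Dw(x_j)\rangle \leq C_{conc}\,|x_i-x_j|^2.
\]

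Finally I would assemble the estimate: multiplying the previous inequality by the negative factor $-2h$ reverses it, and since the remaining term $h^2|Dw(x_i)-Dw(x_j)|^2$ is nonnegative it may simply be discarded from a lower bound, yielding
\[
|\Phi^{\eps}_{i,k,k+1}[\mu]-\Phi^{\eps}_{j,k,k+1}[\mu]|^2 \geq \bigl(1-2h\,C_{conc}\bigr)|x_i-x_j|^2 = \Bigl(1-2d_1 h\bigl(1+\tfrac{\rho^{2}}{\eps^{3}}\bigr)\Bigr)|x_i-x_j|^2.
\]
The claim then follows with $d_2:=2d_1$, whose independence of $(\rho,h,\eps,\mu)$ is inherited from that of $d_1$ in Lemma \ref{propiedadesdelavmuhrhocontinuaconeps}. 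I do not expect any genuine obstacle here: the entire substance of the estimate is the uniform semiconcavity bound already proved in Lemma \ref{propiedadesdelavmuhrhocontinuaconeps}{\rm(ii)}, and the only point requiring minor care is the sign bookkeeping when the cross-term inequality is passed through the factor $-2h$ and the nonnegative quadratic remainder is dropped.
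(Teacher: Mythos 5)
Your proof is correct and follows essentially the same route as the paper: expand the squared distance, discard the nonnegative $h^2$ term, and control the cross term via the semiconcavity estimate of Lemma \ref{propiedadesdelavmuhrhocontinuaconeps}{\rm(ii)}, yielding the constant $d_2=2d_1$. The only difference is cosmetic: you spell out the passage from the Hessian bound to the gradient monotonicity inequality (via Lemma \ref{equivdefsemicon} or the fundamental theorem of calculus along the segment), a step the paper leaves implicit.
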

\begin{proof}
For the reader's convenience, we omit the $\mu$ argument. Recalling \eqref{flujofullydiscretounpaso} and \eqref{defalphaeps}, for every $k=0,\hdots, N-1$ we  have
$$
\ba{rcl} |\Phi^{\eps}_{i,k,k+1}-\Phi^{\eps}_{j,k,k+1}|^2&=& \left|x_i-x_j -h \left[D v_{\rho,h}^{\eps} (x_i,t_{k})-D v_{\rho,h}^{\eps}(x_j,t_{k})\right]\right|^2,\\[6pt]
										\      	&=&  |x_i-x_j |^2 +h^2 |D v_{\rho,h}^{\eps}(x_i,t_k)-D v_{\rho,h}^{\eps}(x_j,t_k)|^2+\\[4pt]
										\      	& \ & -2h \langle D v_{\rho,h}^{\eps} (x_i,t_k)-D v_{\rho,h}^{\eps} (x_j,t_k),x_i-x_j \rangle,
\ea
$$
which yields to 
$$ |\Phi^{\eps}_{i,k,k+1}-\Phi^{\eps}_{j,k,k+1}|^2 \geq |x_i-x_j |^2-2h \langle D v_{\rho,h}^{\eps} (x_i,t_k)-D v_{\rho,h}^{\eps} (x_j,t_k),x_i-x_j \rangle.$$
Therefore, by Lemma  \ref{propiedadesdelavmuhrhocontinuaconeps}{\rm(ii)}, there exists $d_{2}>0$ such that
\eqref{dis:discretetraj} holds. 
\end{proof}\medskip

Now we provide a technical result which,  in the case $d=1$,  allow us  to obtain uniform  $L^{\infty}$ bounds for  $m^{\eps}_{\rho,h}[\mu]$ (see Proposition \ref{Lemmaboundform_hro}{\rm(ii)} below).   \smallskip
\begin{lemma}\label{resultadotecnico} Suppose that $d=1$ and that $\rho^{2}/\eps^{3} \leq d_{2}'$, with $d_{2}'>0$ (independent of $(\rho,h,\eps,\mu)$). Then, there exists a constant $d_{3}>0$ (independent of $h$ small enough and $(\rho,\eps,\mu)$) such that for any   $i\in \ZZ$ and $k=0,\hdots,N-1$, we have that 
\be\label{cotasdelnumero}\sum_{j\in\ZZ} \beta_{i}\left( \Phi^{\eps}_{j,k,k+1} \right)\leq 1+ d_{3}h. \ee
\end{lemma}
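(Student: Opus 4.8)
The plan is to reduce everything, in the one–dimensional setting, to two elementary facts about the discrete flow $y_j:=\Phi^{\eps}_{j,k,k+1}[\mu]=x_j-h\,\hat\alpha^{\eps}_{j,k}[\mu]$: that $j\mapsto y_j$ is strictly increasing, and that consecutive images cannot be too close. Both follow from the one–sided second–order bound of Lemma \ref{propiedadesdelavmuhrhocontinuaconeps}{\rm(ii)}. Writing $a_j:=Dv^{\eps}_{\rho,h}[\mu](x_j,t_k)$ and $M:=d_1(1+\rho^2/\eps^3)$, which under the hypothesis $\rho^2/\eps^3\le d_2'$ is bounded by $d_1(1+d_2')$ independently of $(\rho,h,\eps,\mu)$, estimate \eqref{mamdmmmdaaaassss} reads in dimension one as $(v^{\eps}_{\rho,h}[\mu])''(\cdot,t_k)\le M$. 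Since $v^{\eps}_{\rho,h}[\mu]$ is smooth in $x$ (it is a mollification), integrating this bound over $[x_j,x_{j+1}]$ gives $a_{j+1}-a_j\le M\rho$, whence
\[
 y_{j+1}-y_j=\rho-h(a_{j+1}-a_j)\ge \rho(1-Mh).
\]
For $h$ small enough that $Mh\le 1/3$ — a threshold depending only on $d_1(1+d_2')$ — the right–hand side is positive, so $j\mapsto y_j$ is strictly increasing with consecutive gaps at least $\rho(1-Mh)$.

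Next I would localize the sum. Since $\beta_i$ is supported in the open interval $(x_i-\rho,x_i+\rho)$ of length $2\rho$ and equals $\beta_i(y)=1-|y-x_i|/\rho$ there, only the indices in $J:=\{\,j\in\ZZ:\ y_j\in(x_i-\rho,x_i+\rho)\,\}$ contribute. By the previous step these indices are consecutive, and comparing the total spread with the minimal gap yields
\[
 (|J|-1)\,\rho(1-Mh)\ \le\ y_{\max J}-y_{\min J}\ <\ 2\rho,
\]
which forces $|J|\le 3$ once $Mh\le 1/3$.

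The estimate itself is then a one–line computation. Using $\beta_i(y_j)=1-|y_j-x_i|/\rho$ on $J$, dropping the (nonnegative) interior terms, and combining the triangle inequality with the gap bound,
\[
 \sum_{j\in\ZZ}\beta_i(y_j)=|J|-\frac1\rho\sum_{j\in J}|y_j-x_i|
 \ \le\ |J|-\frac1\rho\,|y_{\max J}-y_{\min J}|
 \ \le\ |J|-(|J|-1)(1-Mh),
\]
and the last expression equals $1+(|J|-1)Mh\le 1+2Mh$ because $|J|\le 3$ (for $|J|\le 1$ the bound $\le 1$ is immediate). Taking $d_3:=2d_1(1+d_2')$ proves \eqref{cotasdelnumero}, with $d_3$ independent of $h$ (small) and of $(\rho,\eps,\mu)$.

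I expect the only genuine subtlety — and the point worth stressing — to be that a naive estimate of the $h$–derivative of the sum would give only a bound of order $h/\rho$, since $|\beta_i'|=1/\rho$; the gain comes precisely from the one–sided inequality $a_{j+1}-a_j\le M\rho$, which simultaneously delivers the ordering of the $y_j$ and the lower bound on the gaps, so that no control on how negative the increments of $Dv^{\eps}_{\rho,h}[\mu]$ may be (which is unavailable, as only semiconcavity holds) is ever needed. Proposition \ref{Discretetrajectoriesprop} could replace the integrated second–order bound for the gap estimate alone, but the argument above has the advantage of also yielding monotonicity of the flow, which is what makes $J$ an interval of consecutive indices.
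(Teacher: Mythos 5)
Your proof is correct, and at its core it runs on the same mechanism as the paper's: the semiconcavity estimate \eqref{mamdmmmdaaaassss} forces the images $y_j=\Phi^{\eps}_{j,k,k+1}$ of distinct grid points to remain at distance essentially $\rho$ from one another, hence at most three of them can meet $\supp(\beta_i)$, and the explicit hat-function form of $\beta_i$ together with the triangle inequality then yields \eqref{cotasdelnumero}. The difference is in how the separation is obtained and how the final estimate is organized. The paper derives the separation from Proposition \ref{Discretetrajectoriesprop}, i.e.\ the pairwise bound $|y_{j_1}-y_{j_2}|\ge\sqrt{1-d_3'h}\,\rho$, and then argues by cases on whether one, two or three points lie in the support, using $\sqrt{1-x}\ge 1-x$ to dispose of the square root; in the three-point case it tacitly orders the points, which is precisely the monotonicity you make explicit. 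You instead integrate the one-dimensional bound $D^{2}v^{\eps}_{\rho,h}[\mu](\cdot,t_k)\le M$, with $M:=d_1\left(1+\rho^2/\eps^3\right)\le d_1(1+d_2')$, over grid cells, obtaining simultaneously that $j\mapsto y_j$ is strictly increasing and that consecutive gaps are at least $\rho(1-Mh)$; this makes the set of contributing indices an interval of consecutive integers and collapses the paper's case analysis into the single computation $\sum_j\beta_i(y_j)\le 1+(|J|-1)Mh\le 1+2Mh$, with no square roots. The trade-off is that your route is genuinely one-dimensional (you integrate the second derivative along the line), whereas the paper's Proposition \ref{Discretetrajectoriesprop} is dimension-free; since the lemma assumes $d=1$ anyway, nothing is lost, and your version is self-contained and slightly cleaner to verify.
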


\begin{proof} For notational simplicity, let us set $y_{j}=  \Phi^{\eps}_{j,k,k+1}$. Note that for any $j_{1}, j_{2} \in \ZZ$, Proposition \ref{Discretetrajectoriesprop} implies that 
$$ \left|y_{j_{1}} - y_{j_{2}} \right|^{2} \geq \left(1-d_{3}'h\right) \left| x_{j_{1}} - x_{j_{2}} \right|^{2} ,$$
where $d_{3}'= d_{2} (1+ d_{2}')$.  Thus, if $j_{1}\neq j_{2}$, we get 
\be\label{estimaciraiz} \left|y_{j_{1}} - y_{j_{2}} \right|^{2} \geq \left(1-d_{3}'h\right) \rho^{2}, \quad \mbox{i.e. } \left|y_{j_{1}} - y_{j_{2}} \right|  \geq \sqrt{\left(1-d_{3}'h\right)} \rho.\ee
Since the diameter of   $\mbox{supp}(\beta_{i})$ is equal to $2\rho$, the above inequality implies that for $h$ small enough (independent of $(\rho,\eps,\mu)$), the cardinality of 
$$ \Z_i:=\left\{ j \in \ZZ \; ; \;  y_{j} \in \mbox{supp}(\beta_{i})) \right\}$$
is at most $3$.  If   $\Z_i$ only has one element, then \eqref{cotasdelnumero} is trivial. If $\Z_i$  has two elements $y_{j_{1}}$, $y_{j_{2}}$ with $y_{j_{1}} < y_{j_{2}}$, then
$$ \beta_{i}(y_{j_{1}}) +  \beta_{i}(y_{j_{2}})= 2- \frac{|y_{j_{1}} - x_{i}|}{\rho}- \frac{|y_{j_{2}} - x_{i}|}{\rho} \leq 2-  \frac{|y_{j_{1}} - y_{j_{2}}|}{\rho},$$
by the triangular inequality. Using \eqref{estimaciraiz} we get
$$ \beta_{i}(y_{j_{1}}) +  \beta_{i}(y_{j_{2}}) \leq 2 - \sqrt{\left(1-d_3'h\right)} \leq 1+ d_3'h, $$
from which \eqref{estimaciraiz} follows. Finally, if  $\Z_i$ has three elements $y_{j_{1}}$, $y_{j_{2}}$ and $y_{j_{3}}$, then (supposing for example that $y_{j_{1}} \leq y_{j_{2}} \leq x_{i} < y_{j_{3}}$) we have 
$$\ba{ll} \beta_{i}(y_{j_{1}}) +  \beta_{i}(y_{j_{3}}) & =1- \frac{x_{i}-y_{j_{1}}}{\rho} + 1 - \frac{y_{j_{3}}-x_{i}}{\rho},\\
									\    &= 2-\frac{y_{j_{2}}-y_{j_{1}}}{\rho}   - \frac{y_{j_{3}}-y_{j_{2}}}{\rho} \leq 2 - 2\sqrt{\left(1-d_3'h\right)} \leq 2d_3'h. \ea$$
Using that  $\beta_{i}(y_{j_{2}}) \leq 1$ and the above  estimate, we obtain \eqref{estimaciraiz} with $d_3:= 2d_3'$. \end{proof}\medskip

Using the above results, we can establish some important properties for $m^{\eps}_{\rho, h}[\mu]$, which are similar to those in Theorem \ref{expresionentermtransp}.\smallskip

\begin{proposition}\label{Lemmaboundform_hro} Suppose that $\rho=O(h)$. Then, there exists  a constant  $d_{4} >0$ (independent of  $(\rho,h,\eps,\mu)$) such that:\smallskip\\
{\rm(i)} For all $t_{1},t_{2} \in [0,T]$, we have that
\be \db_{1}(m^{\eps}_{\rho,h}[\mu](t_{1}),m^{\eps}_{\rho,h}[\mu](t_{2}) )\leq d_4|t_{1}-t_{2}|.\ee
{\rm(ii)}   For all $t\in [0,T]$,  $m^{\eps}_{\rho,h}[\mu](t)$    has a support in $B(0,d_4)$.\smallskip\\
 {\rm(iii)} If    $d=1$ and $\rho=O(\eps^{3/2})$, then we have $$\|m^{\eps}_{\rho,h}[\mu](\cdot,t)\|_{\infty}\leq d_{4}.$$
\end{proposition}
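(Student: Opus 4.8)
The plan is to prove the three assertions in order, exploiting the transport interpretation of the scheme \eqref{defmconmu} together with the discrete non-crossing estimate of Proposition \ref{Discretetrajectoriesprop} and the counting bound of Lemma \ref{resultadotecnico}. The overarching idea is that $m^\eps_{\rho,h}[\mu]$ is a discrete analogue of the transported measure $m[\mu]$ from Theorem \ref{expresionentermtransp}, so each property of $m[\mu]$ has a discrete counterpart, with the extra terms in the estimates controlled by the hypotheses $\rho=O(h)$, $\rho=O(\eps^{3/2})$.

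For assertion {\rm(ii)} (support bound), I would first observe that $m^\eps_{i,0}[\mu]$ is supported on indices $i$ with $E_i\cap\supp(m_0)\neq\emptyset$, hence within $B(0,c_1+\rho)$ by {\bf(H3)}. The recursion \eqref{defmconmu} moves mass from $x_j$ to those $x_i$ with $\beta_i(\Phi^\eps_{j,k,k+1})\neq0$, i.e. with $|x_i-\Phi^\eps_{j,k,k+1}|\leq\rho$ (in the $\|\cdot\|_1$ sense). Since $\Phi^\eps_{j,k,k+1}=x_j-h\hat\alpha^\eps_{j,k}$ and the velocities $\hat\alpha^\eps=Dv^\eps_{\rho,h}[\mu]$ are uniformly bounded by $d_0$ thanks to the Lipschitz property in Lemma \ref{propiedadesdelavmuhrhocontinuaconeps}{\rm(i)}, each step enlarges the support by at most $h d_0+\rho$. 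Summing over $N=T/h$ steps and using $\rho=O(h)$ gives a support radius bounded by $c_1+\rho+N(hd_0+\rho)\leq c_1+O(1)+T d_0+O(1)$, which is uniform in the parameters; this yields {\rm(ii)}.

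For assertion {\rm(i)} (Lipschitz-in-time in $\db_1$), by the linear interpolation \eqref{defmisuradepdemuotraversion} it suffices to estimate $\db_1(m^\eps_{\rho,h}[\mu](t_k),m^\eps_{\rho,h}[\mu](t_{k+1}))$ and then chain the estimates. Testing the difference against a $1$-Lipschitz $\phi$ and using the transport form of the recursion, the displacement of mass from $x_j$ to $\Phi^\eps_{j,k,k+1}$ is at most $|x_j-\Phi^\eps_{j,k,k+1}|=h|\hat\alpha^\eps_{j,k}|\leq h d_0$; the additional $O(\rho)$ error coming from replacing $\Phi^\eps_{j,k,k+1}$ by the grid point reached after applying $\beta_i$ is absorbed using $\rho=O(h)$. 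This gives $\db_1(m^\eps_{\rho,h}[\mu](t_k),m^\eps_{\rho,h}[\mu](t_{k+1}))\leq (d_0+C)h$, and interpolating yields the Lipschitz bound $d_4|t_1-t_2|$ for general times.

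Assertion {\rm(iii)} is the main obstacle, and here the hypothesis $d=1$ and $\rho=O(\eps^{3/2})$ enter crucially. The density of $m^\eps_{\rho,h}[\mu](t_k)$ on the cell $E_i$ is $m^\eps_{i,k}/\rho^d$, so an $L^\infty$ bound amounts to bounding $m^\eps_{i,k}/\rho$ uniformly in $i,k$. From the recursion, $m^\eps_{i,k+1}=\sum_j\beta_i(\Phi^\eps_{j,k,k+1})m^\eps_{j,k}$. The key is to control how much mass can be concentrated into a single cell in one step: the non-crossing estimate \eqref{dis:discretetraj} guarantees that distinct source points $\Phi^\eps_{j,k,k+1}$ cannot collapse too close together, while Lemma \ref{resultadotecnico} (which requires $\rho^2/\eps^3\leq d_2'$, satisfied under $\rho=O(\eps^{3/2})$) shows that at most roughly $1+d_3h$ worth of basis functions overlap any fixed index. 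Combining these, one should show $\max_i m^\eps_{i,k+1}\leq(1+Ch)\max_i m^\eps_{i,k}$, so that $\max_i m^\eps_{i,k}\leq e^{CT}\max_i m^\eps_{i,0}$; since $m^\eps_{i,0}=\int_{E_i}m_0\leq\|m_0\|_\infty\rho^d\leq c_1\rho$, a discrete Gronwall argument then gives $m^\eps_{i,k}/\rho\leq d_4$, hence the claimed $L^\infty$ bound. The delicate point is that the one-step amplification factor must be $1+O(h)$ and \emph{not} $1+O(h)/\rho$ or worse; this is precisely what the quantitative spreading from Proposition \ref{Discretetrajectoriesprop} together with the dimension-one counting in Lemma \ref{resultadotecnico} is designed to deliver, and it is the reason the argument is restricted to $d=1$.
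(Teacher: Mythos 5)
Your proposal is correct and follows essentially the same route as the paper's proof: part (i) by testing against $1$-Lipschitz functions with a one-step displacement bound of order $h d_0 + O(\rho)$ absorbed via $\rho=O(h)$, part (ii) from the uniform gradient bound $\|Dv^{\eps}_{\rho,h}[\mu]\|_{\infty}\leq d_0$, and part (iii) from the counting bound of Lemma \ref{resultadotecnico} yielding the one-step amplification factor $1+d_3h$ and then iterating (discrete Gronwall). There is no substantive difference from the paper's argument.
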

\begin{proof} Let  $\phi \in C(\RR^{d})$ be  a $1$-Lipschitz function. By   \eqref{defmisuradepdemuotraversion}, the function $\psi_{\phi}: [0, T] \to \RR$, defined as 
$$\psi_{\phi}(t):=  \int_{\RR^{d}} \phi(x) \dd m^{\eps}_{\rho,h}[\mu](t),$$
is   affine   in each interval $[t_{k}, t_{k+1}]$, with $k=0,\hdots, N-1$.  It clearly belongs to $W^{1,\infty}([0,T])$ and
$$ \left\| \frac{d}{dt} \psi_{\phi} \right\|_{\infty}= \frac{1}{h}\max_{k=0,\hdots, N-1} \left| \int_{\RR^{d}} \phi(x) \dd [m^{\eps}_{\rho,h}[\mu](t_{k+1})-m^{\eps}_{\rho,h}[\mu](t_{k})]\right|.$$
For every $k=0,\hdots, N-1$ we have, omitting   $\mu$ from the notation, \footnotesize
\begin{eqnarray*} \int_{\RR^{d}} \phi(x) \dd [m^{\eps}_{\rho,h}(t_{k+1})-m^{\eps}_{\rho,h}(t_{k})]=  \frac{1}{\rho^{d}}\underset{i\in \ZZ^{d}} \sum\int_{E_{i}} \phi(x) \dd x \left[\underset{j\in \ZZ^{d}}\sum \beta_{i}\left( \Phi^{\eps}_{j,k,k+1}\right) m^{\eps}_{j,k}-m_{i,k}^{\eps}    \right],&\\
=  \underset{j\in \ZZ^{d}} \sum m^{\eps}_{j,k}  \left[\underset{i\in \ZZ^{d}}\sum \beta_{i}\left( \Phi^{\eps}_{j,k,k+1}\right) \frac{1}{\rho^{d}}\int_{E_{i}} \phi(x) \dd x - \frac{1}{\rho^{d}}\int_{E_{j}} \phi(x) \dd x\right]. &  \end{eqnarray*}
\normalsize
On the other hand, since  $\phi$ is $1$-Lipschitz, we have that
\be\label{cambiointegralpunto} \left|\frac{1}{\rho^{d}}Ê\int_{E_{i}} \phi(x) \dd x - \ \phi(x_{i}) \right| \leq  \rho. \ee
Using \eqref{cambiointegralpunto}, estimate  \eqref{www}, Lemma \ref{propiedadesdelavmuhrhocontinuaconeps}{\rm(i)} and the fact that $\rho=O(h)$, we get that \footnotesize	
$$\ba{ll} \left|\int_{\RR^{d}} \phi(x) \dd [m^{\eps}_{\rho,h}(t_{k+1})-m^{\eps}_{\rho,h}(t_{k})]\right| &\leq	 \underset{j\in \ZZ^{d}}\sum m^{\eps}_{k,j}  \left| \underset{i\in \ZZ^{d}}\sum  \beta_{i}\left( \Phi^{\eps}_{j,k,k+1}\right)   \phi(x_{i})  -   \phi(x_{j}) \right|+ 2 \rho,\\[6pt]
		\ & =  \underset{j\in \ZZ^{d}}\sum 	m^{\eps}_{k,j}  \left| \phi \left( \Phi^{\eps}_{j,k,k+1} \right) - \phi(x_{j})\right|+ 2 c \rho, \\[6pt]
		\ &  \leq  d_{0} h + 2c \rho = \left(  d_{0} + \frac{2c\rho}{h} \right) h\leq c'h,\ea$$
\normalsize
for some constants  $c$, $c'  >0$     independents of  $(\rho,h,\eps,\mu)$.  Therefore, we obtain that $ \left\| \frac{d}{dt} \psi_{\phi} \right\|_{\infty}\leq c'$, which proves {\rm(i)} with $d_4$ to be chosen later.

In order to prove {\rm(ii)}, it suffices to note that since $\|Dv^{\eps}_{\rho,h}[\mu]\|_{\infty} \leq d_0$  we easily check that $\supp(m^{\eps}_{\rho,h}[\mu](t))\subset B(0,c_1+ 2d_{0} T)$.
Now, let us assume $d=1$.  By the definition of $m^{\eps}_{\rho,h}[\mu](\cdot,0)$ in \eqref{defmisuradepdemu} and assumption {\bf (H1)}, we have
$$\|m^{\eps}_{\rho,h}[\mu](\cdot,0)\|_{\infty}= \max_{i\in \ZZ} \left\{ \frac{1}{\rho }m^{\eps}_{i,0}[\mu] \right\} \leq  \|m_{0}\|_{\infty}\leq c_1.$$
Now, given $k=0,\hdots, N-1$, we have that 
$$ \| m_{\rho,h}^{\eps}[\mu](\cdot,t_{k+1}) \|_{\infty} \leq \max_{i\in \ZZ } \left\{ \frac{1}{\rho} m_{i,k+1}^{\eps}[\mu] \right\}=  \frac{1}{\rho} \max_{i\in \ZZ }\left\{\sum_{j\in \ZZ} \beta_{i}\left( \Phi^{\eps}_{j,k,k+1}[\mu] \right) m_{j,k}^{\eps}[\mu]\right\}. $$
Therefore, by Lemma \ref{resultadotecnico}, we obtain that  \small
$$\| m_{\rho,h}^{\eps}[\mu](\cdot,t_{k+1}) \|_{\infty} \leq  \| m_{\rho,h}^{\eps}[\mu](\cdot,t_{k}) \|_{\infty} \sum_{j\in\ZZ }  \beta_{i}\left( \Phi^{\eps}_{j,k,k+1}[\mu] \right)\leq (1+ d_{3}h) \| m_{\rho,h}^{\eps}[\mu](\cdot,t_{k}) \|_{\infty}. $$\normalsize
Iterating in the above expression, we obtain that 
$$\| m_{\rho,h}^{\eps}[\mu](\cdot,t_{k+1}) \|_{\infty} \leq  (1+ d_{3}h)^{\frac{T}{h}} \| m_{0} \|_{\infty}\leq e^{d_{3}T}c_{1},$$
for $h$ small enough. The result follows, taking $d_{4}= \max\{ c',c_1+ 2d_{0} T, e^{d_{3}T}c_{1}\}$.\end{proof}

\subsection{The fully-discrete scheme for the first order MFG problem \eqref{MFG}}
For a given $\rho,h, \eps>0$ and $\mu \in \mathcal{S}^{N+1}$ we still write $\mu$ for the element in $C([0,T]; \P_{1})$ defined as 
\be \label{defmisura}
\mu(x,t):= \frac{1}{\rho^d}\left[\frac{t_{k+1}-t}{h}\sum_{i \in \ZZ^d} \mu_{i, k} \mathbb{I}_{E_i}(x)+\frac{t-t_{k}}{h}\sum_{i \in \ZZ^d} \mu_{i, k+1} \mathbb{I}_{E_i}(x)\right] \; {\text{if }}\; t \in [t_{k}, t_{k+1}]. \ee
Let us  consider the following {\it full  discretization} of (MFG):  
\be\label{MFGfully} \mbox{Find $\mu \in \mathcal{S}_{N+1}$ such that } \hspace{0.2cm}  \mu_{i,k}=m_{i,k}^{\eps}[\mu]\hspace{0.5cm} \mbox{$\forall \; i\in \ZZ^{d}$ and } k=0,\hdots, N,
\ee
where we recall that $m_{i,k}^{\eps}[\mu]$ is defined in \eqref{defmconmu}.
In order to prove that \eqref{MFGfully} admits at least a solution, we will need   the following stability result.\smallskip
\begin{lemma}\label{mepreguntosibuenaestab} Let $\mu^{n} \in \mathcal{S}_{N+1}$ be   a sequence converging to $\mu \in \mathcal{S}_{N+1}$. Then:\smallskip\\
{\rm(i)} $v_{\rho, h}^{\eps}[\mu^{n}](\cdot,\cdot) \to v^{\eps}_{\rho,h}[\mu](\cdot,\cdot)$ uniformly over compact sets. \smallskip\\
{\rm(ii)} $m^{\eps}_{i,k}[\mu^{n}] \to m^{\eps}_{i,k}[\mu]$  for all $i\in \ZZ^{d}$ and  $k=0,\hdots, N$.
\end{lemma}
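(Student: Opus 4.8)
The plan is to exploit that the discretization parameters $\rho,h,\eps$ are \emph{fixed} here, so the entire dependence on the measure argument enters only through the data $F(x_i,\cdot),G(x_i,\cdot)$ in the scheme \eqref{definicionesquema}--\eqref{scheme-control} and, for the second assertion, through the discrete gradients at the grid nodes. First I would record that convergence $\mu^n\to\mu$ in $\mathcal{S}_{N+1}$ transfers, through the embedding \eqref{defmisura}, to $\mu^n(t_k)\to\mu(t_k)$ in $\P_1$ for every $k=0,\dots,N$; combined with the continuity of $F$ and $G$ assumed in \textbf{(H1)}, this gives $F(x_i,\mu^n(t_k))\to F(x_i,\mu(t_k))$ and $G(x_i,\mu^n(t_N))\to G(x_i,\mu(t_N))$ for each fixed node $x_i$.

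For assertion (i) I would prove the nodal convergence $v_{i,k}[\mu^n]\to v_{i,k}[\mu]$ by backward induction on $k$. The base case $k=N$ is exactly the convergence of $G(x_i,\mu^n(t_N))$. For the inductive step I use \eqref{definicionesquema}: by Lemma \ref{propiedadesdelavmuhrhocontinua}(i) the interpolant $I[v_{\cdot,k+1}[\mu]]=v_{\rho,h}[\mu](\cdot,t_{k+1})$ is Lipschitz with a constant $L=c_0(1+T)$ independent of $\mu$ and $n$, so the infimum over $\alpha$ may be restricted once and for all to the fixed ball $\bar B(0,2L)$. On this compact set the map $\alpha\mapsto I[v_{\cdot,k+1}[\mu^n]](x_i-h\alpha)+\half h|\alpha|^2$ converges uniformly to its $\mu$-counterpart: indeed $I[\cdot]$ is linear and, on the relevant compact region, only finitely many basis functions $\beta_j$ are nonzero, so the induction hypothesis gives uniform convergence of the interpolants. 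Since the infimum of a uniformly convergent family over a fixed compact set converges, and since the additive term $hF(x_i,\mu^n(t_k))$ converges by the first paragraph, I obtain $v_{i,k}[\mu^n]\to v_{i,k}[\mu]$. Passing from nodal values to $v_{\rho,h}[\mu^n](x,t)=I[v_{\cdot,[t/h]}[\mu^n]](x)$ is immediate, since for $x$ in a compact set only finitely many $\beta_j$ contribute and there are only $N+1$ time levels; hence $v_{\rho,h}[\mu^n]\to v_{\rho,h}[\mu]$ uniformly on compact sets. Finally, convolution against the fixed, compactly supported mollifier $\rho_\eps$ in \eqref{definicionregularizada} preserves uniform convergence on compact sets, which proves (i).

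For assertion (ii) I argue by forward induction on $k$. The base case is trivial, as $m^\eps_{i,0}[\mu]=\int_{E_i}m_0$ does not depend on $\mu$. For the step, the only $\mu$-dependence in \eqref{defmconmu} is through the one-step flow $\Phi^\eps_{j,k,k+1}[\mu]=x_j-hDv^\eps_{\rho,h}[\mu](x_j,t_k)$, cf. \eqref{flujofullydiscretounpaso}--\eqref{defalphaeps}. Differentiating \eqref{definicionregularizada} under the integral sign gives $Dv^\eps_{\rho,h}[\mu](x_j,t_k)=\int D\rho_\eps(x_j-z)\,v_{\rho,h}[\mu](z,t_k)\,\dd z$, so the uniform-on-compacts convergence from (i) forces $Dv^\eps_{\rho,h}[\mu^n](x_j,t_k)\to Dv^\eps_{\rho,h}[\mu](x_j,t_k)$, and hence $\Phi^\eps_{j,k,k+1}[\mu^n]\to\Phi^\eps_{j,k,k+1}[\mu]$; continuity of $\beta_i$ then yields $\beta_i(\Phi^\eps_{j,k,k+1}[\mu^n])\to\beta_i(\Phi^\eps_{j,k,k+1}[\mu])$. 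Together with the induction hypothesis $m^\eps_{j,k}[\mu^n]\to m^\eps_{j,k}[\mu]$, each summand in \eqref{defmconmu} converges.

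The main obstacle --- and the only genuinely non-routine point --- is that the sum in \eqref{defmconmu} runs over the infinite index set $j\in\ZZ^d$, so termwise convergence of the summands is not by itself sufficient. I would close this gap with the uniform support bound of Proposition \ref{Lemmaboundform_hro}(ii) (valid under $\rho=O(h)$): since $\supp(m^\eps_{\rho,h}[\mu](t))\subset B(0,d_4)$ with $d_4$ independent of $\mu$, the coordinates $m^\eps_{j,k}[\mu^n]$ vanish for all $x_j$ outside a fixed ball and all $n$, so the sum in \eqref{defmconmu} is in fact finite, with a number of nonzero terms bounded uniformly in $n$. A finite sum of convergent sequences converges to the sum of the limits, which gives $m^\eps_{i,k+1}[\mu^n]\to m^\eps_{i,k+1}[\mu]$ and completes the induction.
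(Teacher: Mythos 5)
Your proof is correct and follows essentially the same route as the paper's own (much terser) argument: assertion (i) comes from the continuity of $F$ and $G$ in \textbf{(H1)} propagated through the finite backward recursion \eqref{scheme-control}, and assertion (ii) from pointwise convergence of $Dv^{\eps}_{\rho,h}[\mu^{n}]$ (differentiation of the mollification under the integral sign, which is the paper's appeal to the Lebesgue theorem) followed by forward induction on $k$ in \eqref{defmconmu}. The only place you go beyond the paper is in handling the infinite sum over $j\in\ZZ^{d}$ explicitly through the uniform support bound of Proposition \ref{Lemmaboundform_hro}(ii); the paper passes over this point silently, and your patch is a correct filling of that implicit step.
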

\begin{proof}
Because of the assumptions on $F$ and $G$ in {\bf (H1)} we clearly have {\rm(i)}.  By definition of $v^{\eps}_{\rho,h}[\mu^{n}](x,t)$ and {\rm(i)}, Lebesgue theorem implies that  we have pointwise convergence of $ Dv^{\eps}_{\rho,h}[\mu^{n}]$  to $Dv^{\eps}_{\rho,h}[\mu]$  and obviously also of  $\hat{\alpha}^{\rho,h}_{\eps}[\mu^{n}](\cdot,\cdot)\to \hat{\alpha}^{\rho,h}_{\eps}[\mu](\cdot,\cdot)$. Assertion {\rm(ii)} for  $i\in \ZZ^{d}$ and $k=1$ follows hence from  the definition \eqref{defmconmu} of  $m^{\eps}_{i,1}[\mu^{n}]$. Therefore, by recursive argument we get the result for all $i\in \ZZ^{d}$ and $k=0,\hdots, N-1$.
\end{proof}\smallskip
\begin{theorem}\label{existenciadealmenosuno} There exists at least one solution of \eqref{MFGfully}.
\end{theorem}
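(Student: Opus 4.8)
The plan is to recast \eqref{MFGfully} as a fixed-point problem and to invoke Brouwer's theorem. Define the map $T:\mathcal{S}_{N+1}\to \mathcal{S}_{N+1}$ by $T(\mu):=\big(m^{\eps}_{i,k}[\mu]\big)_{i\in\ZZ^d,\,0\le k\le N}$, where $m^{\eps}_{i,k}[\mu]$ is given by \eqref{defmconmu} and $\mu\in\mathcal{S}_{N+1}$ is identified with an element of $C([0,T];\P_1)$ through \eqref{defmisura}. By construction, a solution of \eqref{MFGfully} is exactly a fixed point of $T$, so it suffices to produce one. The scheme is conservative (see the Remark following \eqref{defmconmu}), so $T$ indeed takes values in $\mathcal{S}_{N+1}$; the two remaining ingredients are a compactification that makes Brouwer applicable and the continuity of $T$.

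The difficulty is that $\mathcal{S}_{N+1}$ is infinite-dimensional, since each factor $\mathcal{S}$ is a simplex over the infinite grid $\ZZ^d$. This is overcome by the uniform support estimate of Proposition \ref{Lemmaboundform_hro}{\rm(ii)}: for every $\mu$ and every $t$, the measure $m^{\eps}_{\rho,h}[\mu](t)$ is supported in $\overline{B(0,d_4)}$, with $d_4$ independent of $\mu$. Let $\mathcal{I}:=\{\,i\in\ZZ^d \ ;\ x_i\in \overline{B(0,d_4)}\,\}$, which is a finite set, and set $K:=\{\,\mu\in\mathcal{S}_{N+1}\ ;\ \mu_{i,k}=0 \text{ whenever } i\notin\mathcal{I}\,\}$. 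Then $K$ is a finite product of probability simplices over $\mathcal{I}$, hence a nonempty, convex, compact subset of a finite-dimensional space. Moreover the $\mu$-independent support bound shows $T(\mathcal{S}_{N+1})\subset K$; in particular $T(K)\subset K$, and the sums appearing in \eqref{defmconmu} are finite when $\mu\in K$, so $T|_K$ is well defined.

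It remains to check that $T|_K:K\to K$ is continuous. If $\mu^n\to\mu$ in $K$, then Lemma \ref{mepreguntosibuenaestab}{\rm(ii)} yields $m^{\eps}_{i,k}[\mu^n]\to m^{\eps}_{i,k}[\mu]$ for every $i$ and $k$; since all the measures involved are supported on the finite set $\mathcal{I}$, this componentwise convergence is exactly convergence in $K$. Hence $T|_K$ is a continuous self-map of a compact convex finite-dimensional set, and Brouwer's fixed-point theorem provides $\mu^{*}\in K$ with $T(\mu^{*})=\mu^{*}$, i.e.\ a solution of \eqref{MFGfully}. I expect the only genuinely delicate point to be the reduction to finite dimension, which rests entirely on the $\mu$-independent support bound of Proposition \ref{Lemmaboundform_hro}{\rm(ii)}; once $T$ is recognized as a continuous self-map of $K$, the conclusion is immediate.
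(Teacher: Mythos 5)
Your proof is correct and follows exactly the route the paper takes: its proof of Theorem \ref{existenciadealmenosuno} consists precisely of citing Lemma \ref{mepreguntosibuenaestab}, Proposition \ref{Lemmaboundform_hro}{\rm(ii)} and Brouwer's fixed-point theorem, and your write-up simply makes explicit the finite-dimensional reduction and continuity check that the paper leaves to the reader.
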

\begin{proof} This is a straightforward consequence of Lemma \ref{mepreguntosibuenaestab}, Proposition \ref{Lemmaboundform_hro}{\rm(ii)}    and Brouwer fixed-point theorem.
\end{proof}\smallskip

Given a solution $m^{\eps}\inÊ\mathcal{S}_{N+1}$ of \eqref{MFGfully},  we set $m^{\eps}_{\rho,h}(\cdot,\cdot)$ for the extension to $\RR^{d}\times [0,T]$ defined in \eqref{defmisuradepdemu}.\smallskip

Now we prove our main result. \smallskip

\begin{theorem}\label{resultadoprincipalfullydiscrete} Suppose that  $d=1$ and that {\bf (H1)-(H3)} hold.  Consider a sequence of positive numbers $\rho_{n}, h_{n}, \eps_{n}$ satisfying that $\rho_{n}= o\left(h_{n} \right)$, $h_{n}= o(\eps_{n})$ and   $\rho_{n}=\ds O(\eps_{n}^{3/2})$ as $\eps_n \downarrow 0$. Let    $\{m^{n}\}_{n\in \mathbb{N}}$ be a sequence of solutions of   \eqref{MFGfully} for the corresponding parameters $\rho_{n}, h_{n}, \eps_{n}$. Then every limit point  in $C([0,T];\P_{1})$ of $m^{n}$ (there exists at least one) solves (MFG). In particular, if {\bf (H4)} holds we have that $m_{\rho_{n},h_{n}}^{\eps_{n}}\to m$ (the unique solution of  (MFG)) in  $C([0,T];\P_{1})$  and in  $L^{\infty}\left(\RR^{d}\times [0,T] \right)$-weak-$\ast$.
\end{theorem}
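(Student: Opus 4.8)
The plan is to combine a compactness argument with a passage to the limit in the weak (distributional) form of the discrete continuity equation, and then to identify the limit through the uniqueness statement of Theorem~\ref{expresionentermtransp}. Throughout, write $m^{\eps_n}_{\rho_n,h_n}(\cdot)\in C([0,T];\P_1)$ for the space-time extension of the solution $m^n$ and $v^n:=v^{\eps_n}_{\rho_n,h_n}[m^n]$. First I would establish precompactness: by Proposition~\ref{Lemmaboundform_hro}{\rm(i)} the family $\{m^{\eps_n}_{\rho_n,h_n}(\cdot)\}_n$ is equi-Lipschitz in time for $\db_1$ (constant $d_4$), and by Proposition~\ref{Lemmaboundform_hro}{\rm(ii)} every $m^{\eps_n}_{\rho_n,h_n}(t)$ is supported in the fixed ball $B(0,d_4)$. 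Since probability measures with support in a fixed compact set form a compact subset of $(\P_1,\db_1)$, the Arzel\`a--Ascoli theorem yields a subsequence (not relabelled) converging in $C([0,T];\P_1)$ to some $m$; this gives the announced existence of a limit point.

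Next I would upgrade the convergence of the value functions. Since $\rho_n=o(h_n)$ forces $\rho_n^2/h_n\to0$, and $\rho_n=O(\eps_n^{3/2})$ holds by assumption, Theorem~\ref{covergencefullydiscreteconeps} applies with $\mu_n=m^n\to m$ and gives $v^n\to v[m]$ locally uniformly together with $Dv^n(x,s)\to Dv[m](x,s)$ at every point where $Dv[m](x,s)$ exists. Because $d=1$, Proposition~\ref{Lemmaboundform_hro}{\rm(iii)} bounds $m^{\eps_n}_{\rho_n,h_n}$ uniformly in $L^\infty(\RR\times[0,T])$, so the limit $m$ is absolutely continuous with $\|m\|_\infty\le d_4$ and $m^{\eps_n}_{\rho_n,h_n}\rightharpoonup m$ weakly-$\ast$ in $L^\infty$. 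In particular $m$ charges no Lebesgue-null set; since $v[m](\cdot,s)$ is semiconcave and hence differentiable a.e., the convergence $Dv^n\to Dv[m]$ holds $\dd m(s)\,\dd s$-a.e.

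The core step is the passage to the limit. Fix $\phi\in C_c^\infty(\RR)$ and set $A^n_k:=\sum_{j}\phi(x_j)m^{\eps_n}_{j,k}$. Using the scheme \eqref{defmconmu} and the identity $\sum_i\phi(x_i)\beta_i(y)=I[\phi](y)$, one gets $A^n_{k+1}-A^n_k=\sum_j m^{\eps_n}_{j,k}\big[I[\phi](\Phi^{\eps_n}_{j,k,k+1})-\phi(x_j)\big]$. The estimate \eqref{www} (with $\gamma=2$), a second-order Taylor expansion of $\phi$ around $x_j$, and the uniform bound $\|Dv^n\|_\infty\le d_0$ turn the bracket into $-h_n\langle D\phi(x_j),Dv^n(x_j,t_k)\rangle+O(h_n^2)+O(\rho_n^2)$. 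Telescoping up to an index $K=K(n)$ with $t_{K}\to t$ and recalling $N=T/h_n$, the accumulated error is $O(h_n)+O(\rho_n^2/h_n)\to0$. I would then convert the remaining Riemann-type sum into the integral $-\int_0^{t}\!\!\int_{\RR}\langle D\phi,Dv^n\rangle\,\dd m^{\eps_n}_{\rho_n,h_n}(s)\,\dd s$; the spatial replacement of grid values by cell integrals costs $O(\rho_n/\eps_n)=O(\eps_n^{1/2})$ (using $\mathrm{Lip}(Dv^n)=O(1/\eps_n)$ from \eqref{aproximacionuniformedelaconvolucion} and that $v^n$ is piecewise constant in time), and the temporal replacement costs $O(h_n/\eps_n)$, both vanishing since $\rho_n=O(\eps_n^{3/2})$ and $h_n=o(\eps_n)$. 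Passing to the limit, writing $g_n:=\langle D\phi,Dv^n\rangle$ and $g:=\langle D\phi,Dv[m]\rangle$ and splitting $\int\!\!\int g_n\,m^{\eps_n}-\int\!\!\int g\,m=\int\!\!\int(g_n-g)m^{\eps_n}+\int\!\!\int g\,(m^{\eps_n}-m)$, the first term tends to zero by dominated convergence ($g_n\to g$ a.e., $|g_n|\le \|D\phi\|_\infty d_0$, compact support, $\|m^{\eps_n}\|_\infty\le d_4$) and the second by weak-$\ast$ convergence. Together with $A^n_0\to\int\phi\,\dd m_0$ and $A^n_{K}\to\int\phi\,\dd m(t)$, this yields, for all $t$ and $\phi$, the identity $\int_{\RR}\phi\,\dd m(t)-\int_{\RR}\phi\,\dd m_0=-\int_0^t\!\!\int_{\RR}\langle D\phi,Dv[m]\rangle\,\dd m(s)\,\dd s$, i.e.\ $m$ is a distributional solution of the continuity equation \eqref{eqcontimu} driven by $Dv[m]$.

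Finally I would identify the limit: by the uniqueness part of Theorem~\ref{expresionentermtransp}, $m=\Phi[m](\cdot,0,\cdot)\sharp m_0$, which is exactly (MFG). Under {\bf(H4)}, Theorem~\ref{unicidadcasocont} gives a unique solution of (MFG), so every limit point of the precompact sequence coincides with it, forcing convergence of the whole sequence in $C([0,T];\P_1)$; the uniform $L^\infty$ bound then upgrades this to weak-$\ast$ convergence in $L^\infty(\RR\times[0,T])$. I expect the delicate point to be the passage to the limit in the nonlinear term $\int\!\!\int\langle D\phi,Dv^n\rangle\,\dd m^{\eps_n}$, where one multiplies two sequences that converge only a.e., respectively weakly-$\ast$. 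What makes the splitting argument work is precisely the uniform $L^\infty$ bound on $m^{\eps_n}$ from Proposition~\ref{Lemmaboundform_hro}{\rm(iii)}, which guarantees that the limit $m$ does not concentrate on the (null) singular set of $Dv[m]$; this bound is available only for $d=1$, and that is the structural reason the theorem is confined to one space dimension.
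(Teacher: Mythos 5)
Your proposal is correct and follows essentially the same route as the paper's proof: compactness via Proposition \ref{Lemmaboundform_hro} and Ascoli, telescoping the discrete scheme \eqref{defmconmu} against a test function with error accounting driven by $\rho_n=o(h_n)$, $h_n=o(\eps_n)$, $\rho_n=O(\eps_n^{3/2})$, passage to the limit by pairing a.e.\ (hence strong $L^1$) convergence of $D\phi\cdot Dv^n$ with weak-$\ast$ $L^\infty$ convergence of $m^n$, and identification of the limit through the uniqueness in Theorem \ref{expresionentermtransp}. The only cosmetic differences (working with grid sums $\sum_j\phi(x_j)m^n_{j,k}$ instead of cell averages, and using \eqref{www} with $\gamma=2$ rather than $\gamma=1$) do not change the substance of the argument.
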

\begin{remark}
The assumption $\rho=o(h)$  has the form of an inverse CFL condition
 and is typical  for Semi-Lagrangian schemes  (see e.g. \cite{falconeferretilibro}), which   allow large time steps. 
 \end{remark}
\begin{proof} For notational convenience we will write  $v^{n}:= v_{\rho_{n},h_{n}}^{\eps_{n}}[m^{n}]$. By  Proposition \ref{Lemmaboundform_hro}{\rm(i)} and Ascoli theorem we can assume the existence of $\ov{m}\in C([0,T]; \P_{1})$ such that   $m^{n}$ (as an element of $C([0,T]; \P_{1})$) converge to $\ov{m}$ in $ C([0,T]; \P_{1})$.    Moreover,  Proposition \ref{Lemmaboundform_hro}{\rm(iii)} implies that, up to some subsequence,   $m^{n}$ (as an element of $L^{\infty}(\RR^{d}\times [0,T])$) converge in   $L^{\infty}\left(\RR^{d}\times [0,T] \right)$-weak-$\ast$ to some $\hat{m}$. Thus, we necessarily have that  $\ov{m}$ is absolutely continuous and its density, still denoted as $\bar{m}$,  is equal to  $\hat{m}$.   In order to complete the proof, we now   show   that $\ov{m}$ solves the continuity equation \eqref{defsoldistribucionsoloenespacio}, i.e. for any $t\in [0,T]$ and $\phi \in C_{c}^{\infty}(\RR^{d})$ 
\be\label{eqfinal}
 \int_{\RR} \phi(x) \dd \ov{m}(t)(x)=  \int_{\RR} \phi(x) \dd m_{0}(x) -  \int_{0}^{t}\int_{\RR} D \phi(x) D v [\ov{m}](x,s) \dd \ov{m} (s)(x)\dd s.\ee
Given $t\in [0,T]$, let us set $t_{n}:=  \left[ \frac{t}{h_{n}} \right] h_{n}$. We have
\be\label{teofinec1} \int_{\RR} \phi(x) \dd m^{n}(t_{n}) = \int_{\RR} \phi(x) \dd m_{0}(x)+   \sum_{k=0}^{n-1}  \int_{\RR}\phi(x) \dd  \left[m^{n}(t_{k+1})-m^{n}(t_{k})\right].\ee
By definitions \eqref{defmconmu} and \eqref{defmisuradepdemu}, setting $\Phi^{n}_{i,k,k+1}:=x_{i}- h_n  D v^{n}(x_i,t_k)$,  for all $k=0,\hdots, n-1$ we have 
\be\label{serieecsdemfinal}\ba{rcl}  \int_{\RR}\phi(x) \dd m^{n}(t_{k+1}) &=&\sum_{i\in  \ZZ} m^{n}_{i,k+1}  \frac{1}{\rho_{n} }  \int_{E_{i}} \phi(x) \dd x,\\[6pt]
									\     & =&  \sum_{i\in \ZZ} \frac{1}{\rho_{n} }  \int_{E_{i}} \phi(x) \dd x \sum_{j\in \ZZ} \beta_{i}\left(\Phi_{j,k,k+1}^{n}   \right)m^{n}_{j,k}, \\[6pt]
									\ 	&=&  \sum_{j\in \ZZ} m^{n}_{j,k} \sum_{i\in \ZZ} \beta_{i}\left(\Phi_{j,k,k+1}^{n} \right) \frac{1}{\rho_{n} }  \int_{E_{i}} \phi(x) \dd x. \ea \ee
As in \eqref{cambiointegralpunto} we get 
$$ \left|\frac{1}{\rho_n}Ê\int_{E_{i}} \phi(x) \dd x - \ \phi(x_{i}) \right| \leq  \| D\phi \|_{\infty} \rho_n.$$
Therefore, combining with \eqref{serieecsdemfinal}, we get (recalling  \eqref{www} with $\gamma=1$)
\be\label{seciguald}\ba{rcl} \int_{\RR}\phi(x) \dd m^{n}(t_{k+1})& =& \sum_{j\in \ZZ} m^{n}_{j,k} \sum_{i\in \ZZ} \beta_{i}\left(\Phi_{j,k,k+1}^{n} \right)\phi(x_{i}) + O(\rho_n), \\[6pt]
									\ &= &   \sum_{j\in \ZZ} m^{n}_{j,k}  I [\phi] \left(\Phi_{j,k,k+1}^{n}\right) + O(\rho_n), \\[6pt]
									\ &= &   \sum_{j\in \ZZ} m^{n}_{j,k}  \phi\left(\Phi_{j,k,k+1}^{n} \right) + O(\rho_n).\ea \ee
On the other hand, by Lemma \ref{propiedadesdelavmuhrhocontinua}{\rm(i)}, the function  $v^{n}(\cdot, t)$ is Lipschitz (with Lipschitz constant independent of $n$). Therefore, by \eqref{aproximacionuniformedelaconvolucion}   we have the existence of a constant $c>0$ (independent of $n$) such that 
\be\label{stimaDv} \left| D v^{n}(x,t) - D v^{n}(y,t) \right| \leq \frac{c}{\eps_{n}} |x-y|,\ee
which implies, setting $\Phi^{n}_{k,k+1}(x)=x- h_n D v^{n}(x,t)$, that
$$ \left| \phi\left(\Phi^{n}_{k,k+1}(x) \right)-\phi\left(\Phi^{n}_{k,k+1}(y) \right)\right| \leq   c' \left(1+\frac{h_n}{\eps_{n}}\right) |x-y|.$$
for some $c'>c$ (which is also independent of $n$).  Therefore, we have 
$$ \left|\frac{1}{\rho_n}Ê\int_{E_{j}}  \phi\left(\Phi^{n}_{k,k+1}(x)\right) \dd x - \phi\left(\Phi_{j,k,k+1}^{n}\right) \right| \leq c' \left(1+\frac{h_n}{\eps_{n}}\right) \rho_n.$$
Since $\frac{h_n}{\eps_{n}} =O(1)$, by \eqref{seciguald}, we get 
$$\ba{rcl} \int_{\RR}\phi(x) \dd m^{n}(t_{k+1})&=&  \sum_{j\in \ZZ} m^{n}_{j,k} \frac{1}{\rho_{n}}\int_{E_{j}} \phi\left(\Phi^{n}_{k,k+1}(x) \right) \dd x +  O\left(\rho_n\right), \\
									\  &= & \int_{\RR}  \phi\left(\Phi^{n}_{k,k+1}(x) \right) \dd  m^{n}(t_{k}) +  O\left(\rho_n\right).\ea $$
The expression above yields to
\be\label{ecsda}\ba{rcl} \int_{\RR}\phi(x) \dd  \left[m^{n}(t_{k+1})-m^{n}(t_{k})\right] &=&\int_{\RR}\left[   \phi\left(\Phi^{n}_{k,k+1}(x) \right) - \phi(x)\right] \dd  m^{n}(t_{k})   + O\left(\rho_n\right),\\[4pt]
 												              \    &= &- h_{n}\int_{\RR} D\phi(x)Dv^{n}(x,t_{k}) \dd  m^{n}(t_{k}) \\[4pt]
												              \     & \ &  + O\left(h_{n}^{2}+\rho_{n}\right)\ea\ee
 Since  $D\phi(\cdot)\cdot Dv^{n}(\cdot ,t_{k})$ is $c''/\eps_{n}$-Lipschitz (with $c''$ large enough), Proposition  \ref{Lemmaboundform_hro}{\rm(i)} gives that for all $s\in [t_{k}, t_{k+1}]$, with $k=0,\hdots, n-1$, we have
 $$ \left|\int_{\RR}  D\phi(x)Dv^{n}(x,t_{k}) \dd \left[ m^{n}(s)-  m^{n}(t_{k})\right]\right| \leq \frac{c''}{\eps_{n}} |s-t_k| \leq \frac{c''h_{n}}{\eps_{n}},$$
 which implies that, using that $Dv^{n}(x,s)= Dv^{n}(x,t_k)$ for $s\in [t_k, t_{k+1}[$,  
 \be\label{unaexp} \left| \int_{t_{k}}^{t_{k+1}}\int_{\RR}  D\phi(x) Dv^{n}(x,s)  \dd \left[ m^{n}(s)-  m^{n}(t_{k})\right]\dd s \right|\leq \frac{c''h_{n}^{2}}{\eps_{n}}.\ee 
 Therefore, combining \eqref{unaexp} and \eqref{ecsda}, we obtain that 
 $$\ba{rcl}  \int_{\RR}\phi(x) \dd  \left[m^{n}(t_{k+1})-m^{n}(t_{k})\right]  &=& -\int_{t_{k}}^{t_{k+1}}\int_{\RR}  D\phi(x) Dv^{n}(x,s)  \dd  m^{n}(s)(x) \dd s\\[4pt]
 											       \ & Ê\ & +O\left( \frac{h_{n}^{2}}{\eps_{n}}+ \rho_{n}\right).\ea $$
 Thus, summing from $k=0$ to $k=n-1$ and using  \eqref{teofinec1}
 \be\label{antesdepasarallimite}\ba{rcl}  \int_{\RR} \phi(x) \dd m^{n}(t_{n})(x)&=& \int_{\RR} \phi(x)  m^{n}(x,0)-\int_{0}^{t_{n}}\int_{\RR}  D\phi(x) Dv^{n}(x,s)  m^{n}(x,s)\dd x \ \dd s \\[6pt]
 									\       & \ & + O\left( \frac{h_{n}}{\eps_{n}}+\frac{\rho_{n}}{h_{n}}\right). \ea\ee
By Theorem \ref{covergencefullydiscreteconeps} we have that  $Dv^{n}(x,s) \to Dv[\bar{m}](x,s)$ for a.a. $(x,s) \in \RR\times [0,T]$. Therefore, using that $\phi\in C^{\infty}_{c}(\RR)$,   the Lebesgue theorem implies that
$$  \mathbb{I}_{[0,t_{n}]} D\phi(\cdot)\cdot Dv^{n}(\cdot,\cdot)  \rar   \mathbb{I}_{[0,t]} D\phi(\cdot)\cdot Dv[\bar{m}](\cdot,\cdot) \in L^{1}( \RR \times [0,T]) \hspace{0.3cm} \mbox{strongly in $L^1$},$$
and since $m^{n}$ converge to $\bar{m}$ in $L^{\infty}\left(\RR\times [0,T] \right)$-weak-$\ast$, we can pass to the limit in \eqref{antesdepasarallimite} to obtain \eqref{eqfinal}. The result follows.
%
\end{proof}
\section{Numerical Tests}\label{numericaltests}
We show numerical simulations for the case $d=1$.  Given $\eps$, $\rho$, $h>0$ we set $\{m_{i,k}^{\eps} \; ; \; i \in \ZZ^{d}, \; k=0,\hdots, \left[\frac{T}{h}\right]\}$ for the solution  of \eqref{MFGfully} and  $\{v_{i,k}^{\eps} \; ; \; i \in \ZZ^{d}, \; k=0,\hdots, \left[\frac{T}{h}\right]\}$ for the associate value functions.  We approximate heuristically  $m_{i,k}^{\eps}$ and $v_{i,k}^{\eps}$   with    a fixed--point iteration method. We consider as initial   guess the element in $m^{\eps,0} \in \mathcal{S}_{N+1}$ given by
  $$m^{\eps,0}_{i,k}=m^{\eps}_{i,0}=\int_{E_i}m_0(x)\dd x,\quad i\in \mathbb{Z},\; k=0,\dots,N.$$
Next, for $p=0,1,2, \hdots$, given $m^{\eps,p} \in \mathcal{S}_{N+1}$ we calculate  $v^{\eps,p+1}\in B(\G_{\rho,h})$ with the backward scheme \eqref{scheme-control}, taking as $\mu$ the extension of $m^{\eps,p}$ to $C([0,T]; \P_{1})$ defined in \eqref{defmisura}. The element $ m^{\eps,p+1} \in \mathcal{S}_{N+1}$  is then computed with the forward scheme \eqref{defmconmu}, taking 
%
%
%
\be\label{eq:h_eps}
\rho(x)=\frac{1}{\sqrt{2 \pi}} e^{-x^2/ 2}.
\ee
In the numerical simulations  we approximate  \eqref{defalphaeps} with a discrete convolution, using a central difference scheme for the gradient. The iteration process  is   stopped once the quantities
\be\label{errors}E(v^{\eps,p}):= \|v^{\eps,p+1}-v^{\eps,p}\|_{\infty},\quad E(m^{\eps,p}):= \|m^{\eps,p+1} -m^{\eps,p}\|_{\infty},\ee
are below a given threshold $\tau$ or when it has reached a fixed number of iterations $p$. \smallskip
\begin{remark}
The theoretical study of the convergence of the fixed--point iterations is not analyzed  in the present paper. The analysis of a convergent and efficient method  to solve \eqref{MFGfully} remains  as subject of future research.
\end{remark} \smallskip


By Proposition \ref{Lemmaboundform_hro}{\rm(ii)}, we know that $m^{\eps}$  has a compact support, uniformly in $(\eps, \rho,h)$.  Therefore, in order to calculate the iteration  $\ds m^{\eps,p+1}_{i,k}$ we only need the values  $\ds v^{\eps, p+1}_{i,k}$ for  $i$ such that $i\rho$ belongs to a compact set $K$, which is independent of $(\eps, \rho, h, p)$. This fact allows us  to drop the analysis of boundary conditions.
 
For the numerical tests we will consider running costs of the form 
$$\frac{1}{2}\alpha^2(t)+F(x,m(t))=\frac{1}{2}\alpha^2(t)+f(x)+ V(x,m(t)),$$
where $f$ is $C^2$ and 
\begin{equation}\label{eq:V}
V(x,m(t))= \rho_{\sigma} \ast \left[\rho_{\sigma}  \ast m(t)\right](x), \hspace{0.3cm} \mbox{for some $\sigma>0$ to be chosen later.}
\end{equation}
A straightforward calculation shows that   $F(x,m(t))=f(x)+ V(x,m(t))$  satisfies assumption {\bf (H4)}.
\subsection{Test 1}
We simulate a game where the agents  are adverse to the presence of other agents  during the game and, at the end, they do not want to live near the boundary of   a domain $\Omega$.
In order to model this situation, we take   $\Omega=[-0.1,1.1]$, and    running cost 
 $$\frac{1}{2}\alpha^2+F(x,m )=\frac{1}{2}\alpha^2+0.3V(x,m),$$
where $V$ is given by \eqref{eq:V} with  $\sigma=0.2$.  We choose $T=1$ as final time and 
$$G(x)=-0.5(x+0.5)^2(1.5-x)^2,$$
as final cost function. We take as initial mass distribution  
$$m_0(x)= \frac{\nu(x)}{\int_{\Omega}\nu(x)dx}$$ where $\nu(x)=\mathbb{I}_{[0,1]}(x)(1-0.2\cos(\pi x))$.

The second term in the definition of  $F$ penalizes  high mass density during the game whereas the final  condition $G$ 
 penalizes the fact that the agents are near   the boundary at time $T$.
 
We consider two series of numerical tests for a better understanding of the role of the regularizing parameter $\varepsilon$.
In the first series, we fix smalls space and time steps and we  vary the regularization parameter $\eps$. In the second series we decrease all the parameters $(\eps,\rho,h)$, respecting the balancing rules in Theorem \ref{resultadoprincipalfullydiscrete}.

Fig. \ref{Test1vareps} shows the  behavior of the errors \eqref{errors} in logarithmic scale on the $y$-axis versus the number of fixed--point iterations on the $x$-axis. 
We fixed    $\rho=0.0075$, $h=0.015$ and computed  20 fixed-point iterations for each one of the following   values of $\eps$:    $\eps=0.4$, $\eps=0.04$ and   $\eps=0.004$. 
 We observe a slower convergence when  $\varepsilon=0.4$ and we get a better and very similar  result when $\varepsilon=0.04,0.004$. 
\begin{figure}[ht!]
\begin{center}
\includegraphics[width=5cm]{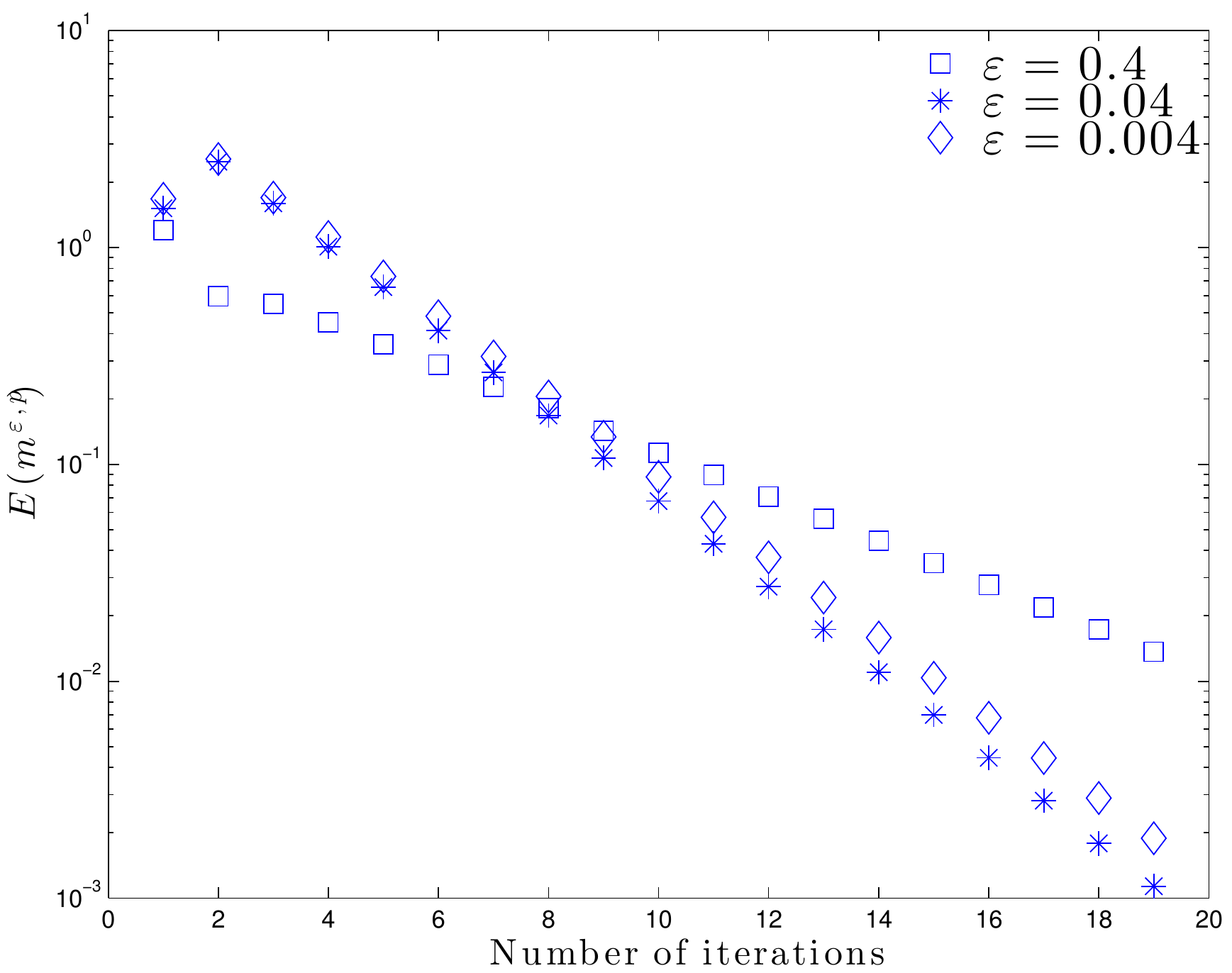}
\includegraphics[width=5cm]{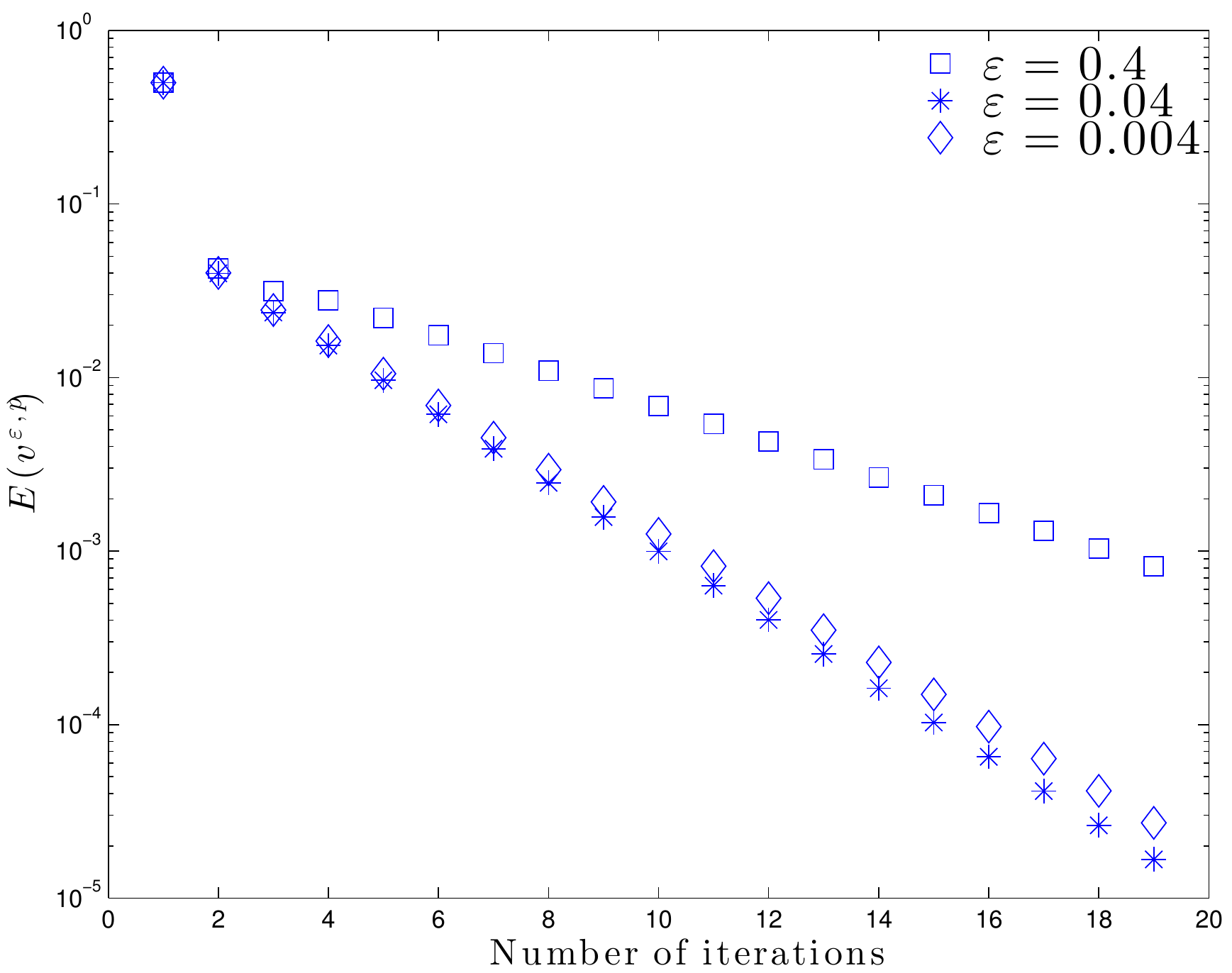}
\caption{ {\bf Errors}: $E(m^{\eps,p})$, $E(v^{\eps,p})$ varying only $\varepsilon$ and  keeping the other parameters fixed.} 
\label{Test1vareps}
\end{center}
\end{figure}
\begin{table}[ht!!]\caption{Parameters and errors }
\begin{center}\label{tab:test1SL}
\begin{tabular}{|c|c|c|c|c|}\hline
$\rho$ & $h$ & $\varepsilon $ &$E(v^{\eps,20})$& $E(m^{\eps,20})$ \\
\hline \hline
$1.50\cdot 10^{-2}$&$3.00 \cdot 10^{-2}$& $6.00 \cdot 10^{-2} $& $4.57 \cdot 10^{-6} $& $2.08 \cdot 10^{-4} $\\ \hline 
$7.50\cdot 10^{-3}$&$1.50 \cdot 10^{-2}$& $4.00 \cdot 10^{-2}$ & $1.05 \cdot 10^{-5} $&  $7.20 \cdot 10^{-4} $ \\ \hline 
$3.75\cdot 10^{-3}$&$7.50 \cdot 10^{-3}$& $2.50 \cdot 10^{-2} $& $1.04\cdot 10^{-5} $ & $9.96\cdot 10^{-4} $\\ \hline  
$1.87\cdot 10^{-3}$&$3.75 \cdot 10^{-3}$& $1.60 \cdot 10^{-2} $& $9.74 \cdot 10^{-4} $ &$3.56\cdot 10^{-3} $\\ \hline 
\end{tabular}\\[30pt]
\end{center}
\end{table}
\begin{figure}[ht!]
\begin{center}
\includegraphics[width=5cm]{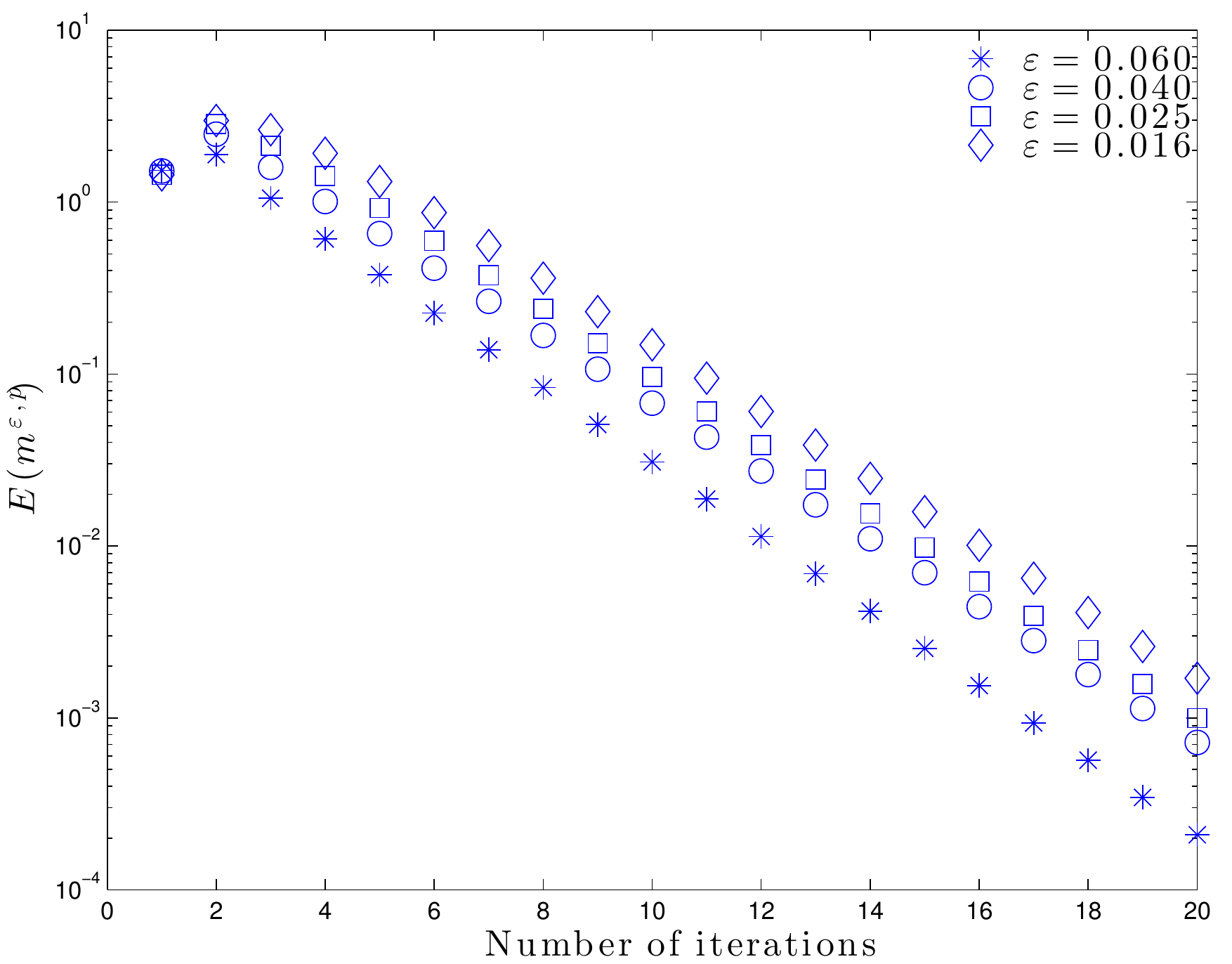}
\includegraphics[width=5cm]{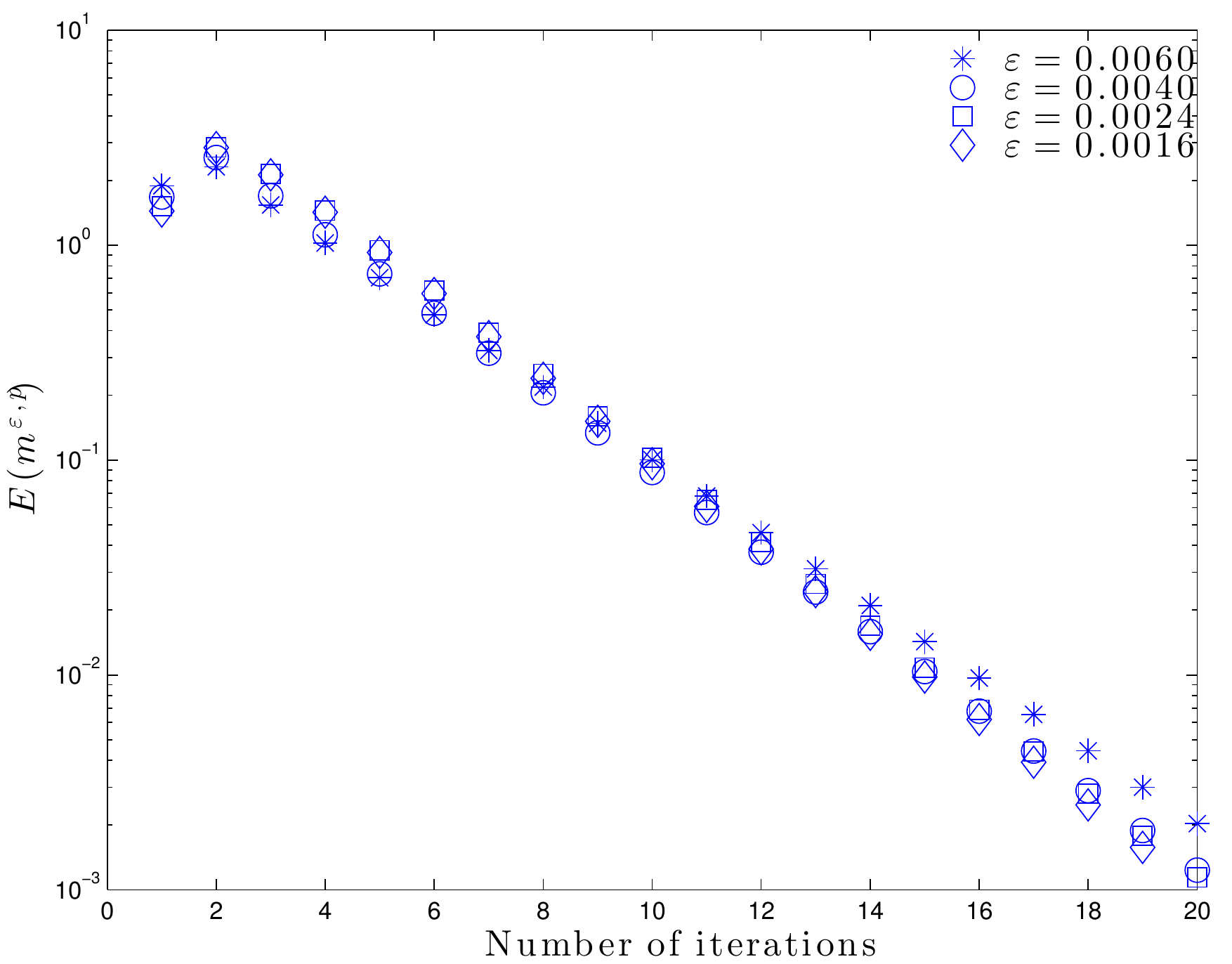}
\caption{ {\bf Errors}: $E(m^{\eps,p})$ varying all the parameters $(\eps,\rho,h)$ according to  Table \ref{tab:test1SL} (left), $E(m^{\eps,p})$  varying  $(\rho,h)$ according to  Table \ref{tab:test1SL} and varying  the regularizing parameter $\eps$ with smaller values (right).\label{Test1parvar}}
\end{center}
\end{figure}\\
In the second series of numerical tests, we vary all the parameters as shown in Table \ref{tab:test1SL} and for each set of parameters we computed $20$ fixed-point iterations.
The parameters have been chosen according to the balance requirements of Theorem \ref{resultadoprincipalfullydiscrete}.
We   observe an increasing trend in the errors with respect to decreasing values of $\eps$. This is due to the fact that we fixed the number of iterations and that smaller are the discretization parameters greater are the number of iterations to reach the fixed error threshold $\tau$.

In Fig.\ref{Test1parvar} (left), we plot  the errors for the mass distributions with all the  parameters $(\rho,h,\varepsilon)$ varying  as in Table \ref{tab:test1SL}. 
We can see that the errors of the fixed point algorithm decreases with the number of fixed point iterations $p$.  

Let us remark that the  theoretical balance of parameters in  Theorem \ref{resultadoprincipalfullydiscrete} requires  to choose the regularizing parameter $\eps$ quite large compared to the space step $\rho$.
However, even disregarding this request and choosing  to regularize less, i.e. taking $\eps$ smaller,  we still get the numerical  convergence. This is shown 
in Fig.\ref{Test1parvar} (right), where we plot  the errors for the mass distributions with the  parameters $(\rho,h)$ varying  as in the first two columns of Table \ref{tab:test1SL} and setting $\eps$ on each row from the top to the bottom equal to $\eps=6.00 \cdot 10^{-3}$, $4.00 \cdot 10^{-3}$, $2.50 \cdot 10^{-3}$ and $1.60 \cdot 10^{-3}$, respectively.

In all the tests, we observe the same shape for the mass and value function evolution. 
In Fig. \ref{Test1mass} we plot the mass evolution in the  time--space domain $\Omega \times[0,T]$ for the case $\rho=3.75\cdot 10^{-3}$, $h=7.5 \cdot 10^{-3}$ and $\varepsilon=0.025$. 
We  observe that from the initial configuration, the mass distribution tends to avoid the boundary of $\Omega$  and at the same time it does not accumulate completely at the center. 

In Fig. \ref{Test1valuefandoptf}  the discrete value function  $v^{\eps}_{i,k}$ and its  gradient  $D v^{\eps}_{i, k}$ are plotted in the domain $\Omega\times[0,T]$.


\begin{figure}[ht!]
\begin{center}
\includegraphics[width=3cm]{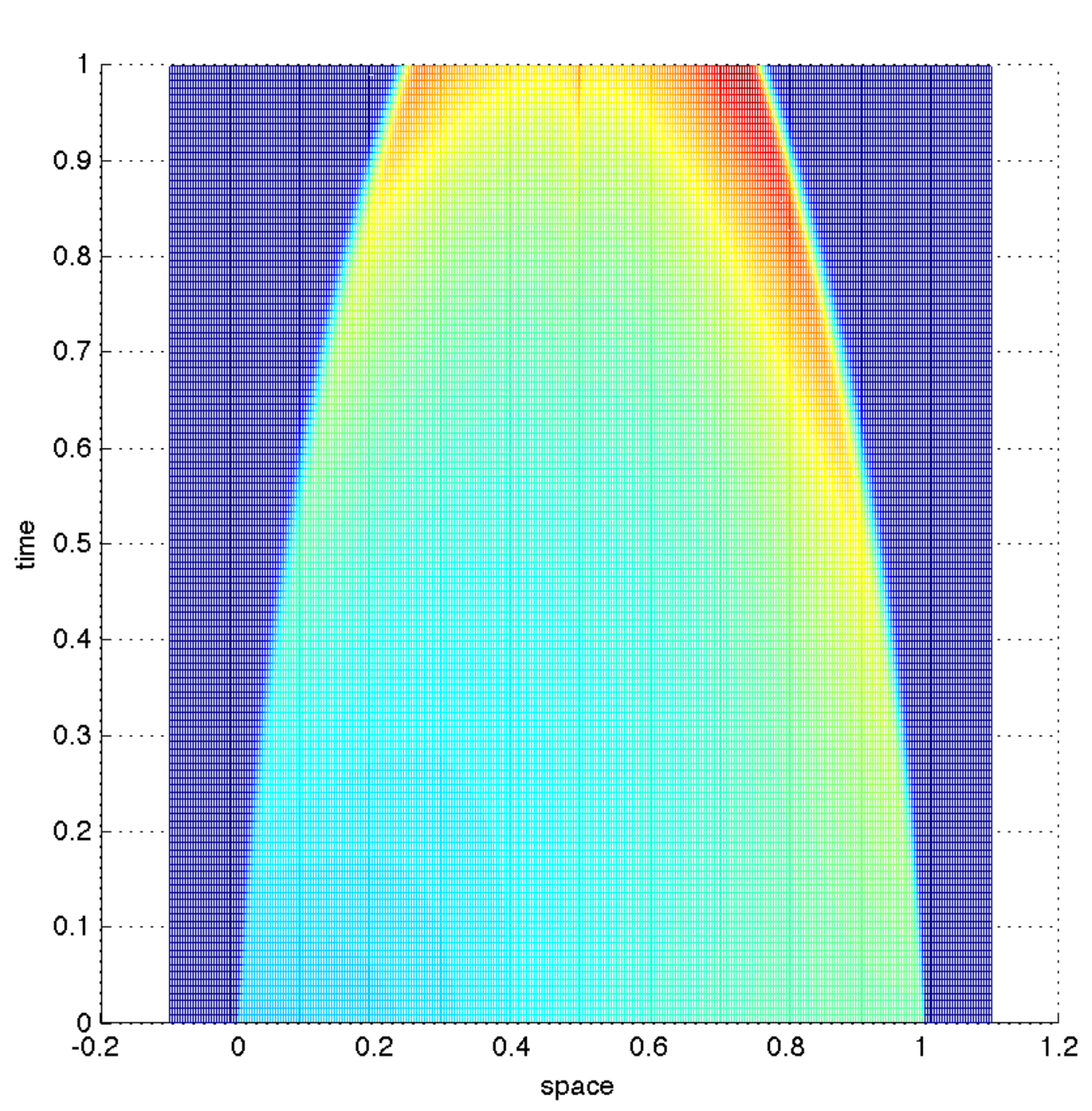}
\includegraphics[width=5cm]{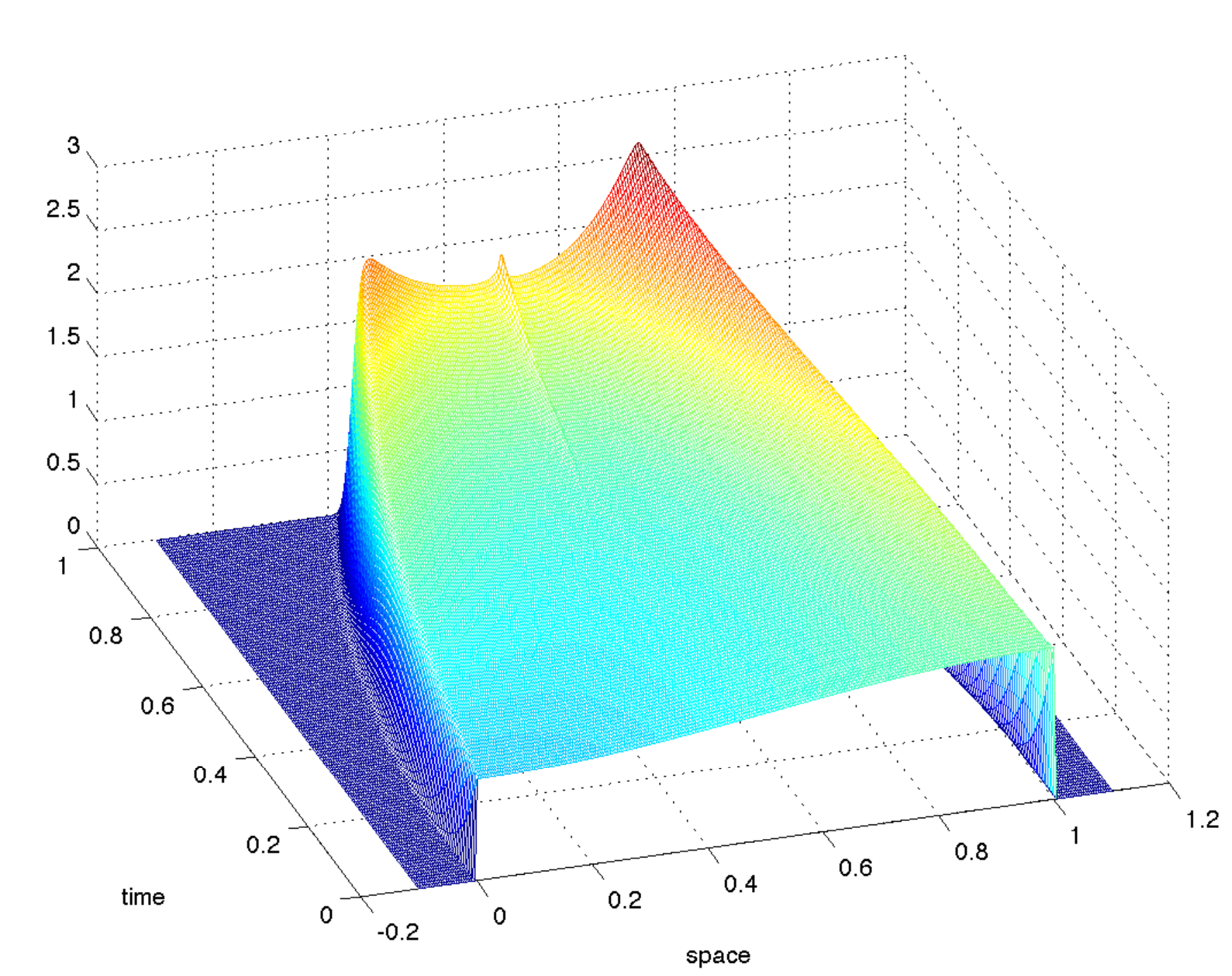} 
\caption{{\bf Mass evolution $m_{i,k}^\eps$}}
\label{Test1mass}
\end{center}
\end{figure}
\begin{figure}[ht!]
\begin{center}
\centerline{\includegraphics[width=5cm]{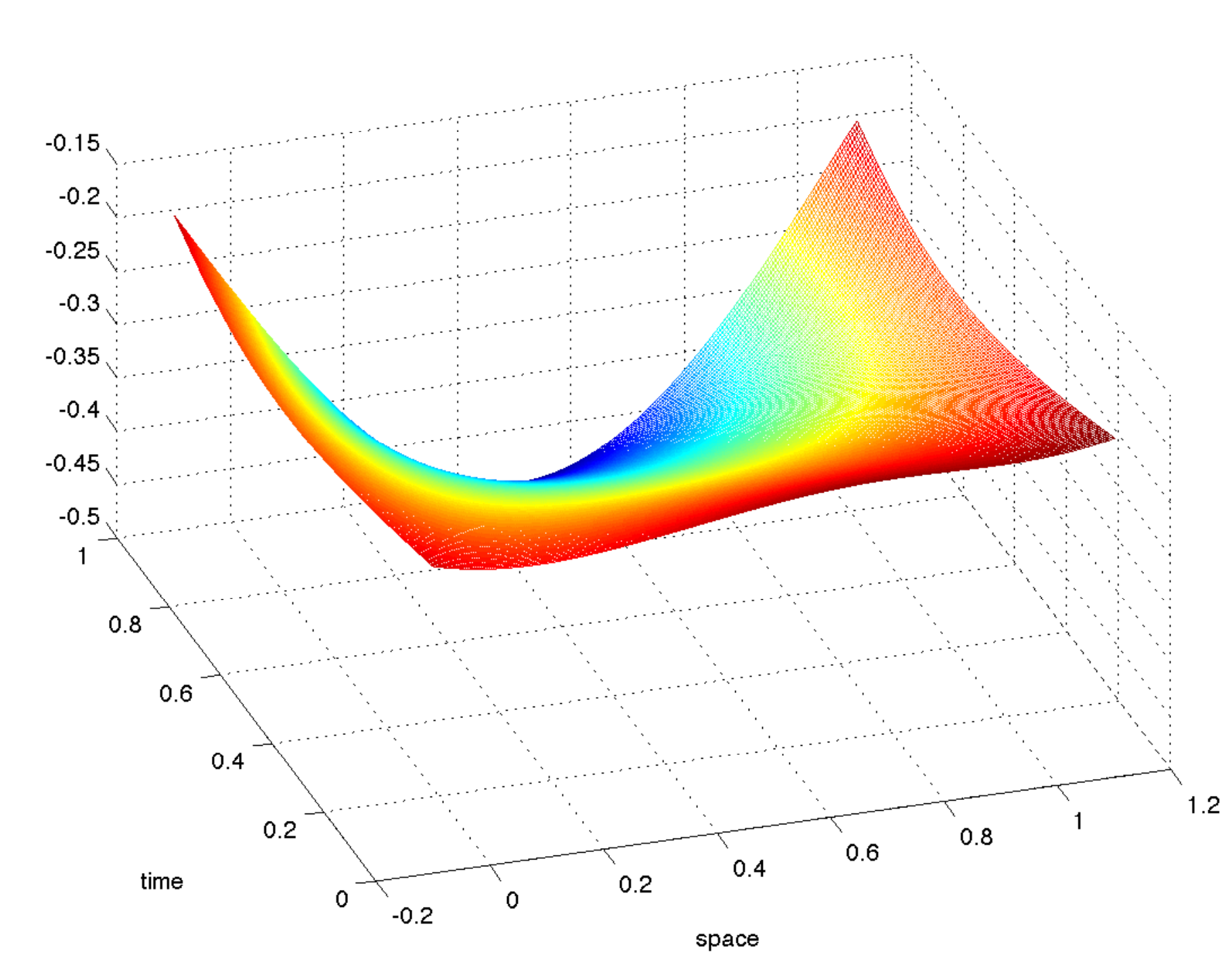}
\includegraphics[width=5cm]{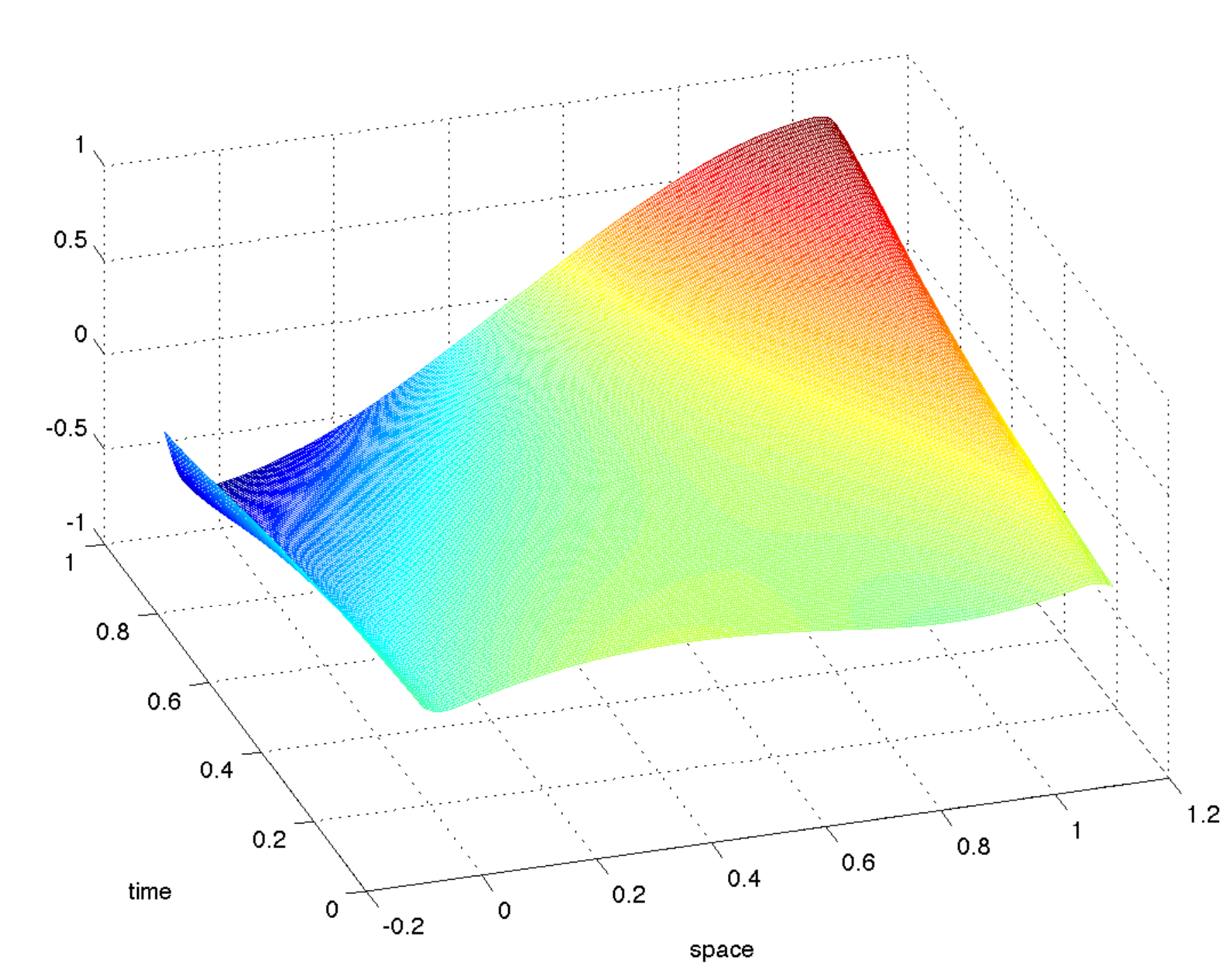}}
\caption{ {\bf{ Value function }} $v^{\eps}_{i,k}$ (left) and {\bf Gradient } $D v^{\eps}_{i,k}$(right). }
\label{Test1valuefandoptf}
\end{center}
\end{figure}
\subsection{Test 2}  We model now a game where the agents want to live at $x=0.2$ but again they are adverse to the presence of other agents.
We take as  numerical  space domain  $\Omega= [0,1]$ and    final time $T=1$. The running cost   function is modeled as $$\frac{1}{2}\alpha^2+F(x,m )=\frac{1}{2}\alpha^2+(x-0.2)^2+ V(x,m),$$ 
where $V(x,m)$ is defined in \eqref{eq:V} with  $\sigma=0.25$. We do not consider a final cost, i.e. we take $G\equiv 0$. 
We choose  as initial mass distribution: 
$$m_0(x)=\frac{\nu(x)}{\int_{\Omega}\nu(x)dx},\; {\rm with}\; \nu(x)= e^{-(x-0.75)^2/ (0.1)^2}.$$
We choose as space discretization step  $\rho=3.3\cdot 10^{-3}$, as time step  $h=0.005$ and as regularization parameter $\eps=0.025$. We perform fixed-point iterations until the error threshold $\tau=10^{-3}$ is reached. This is achieved after $15$ iterations.
\begin{figure}[htbp]
\begin{center}
\includegraphics[width=3cm]{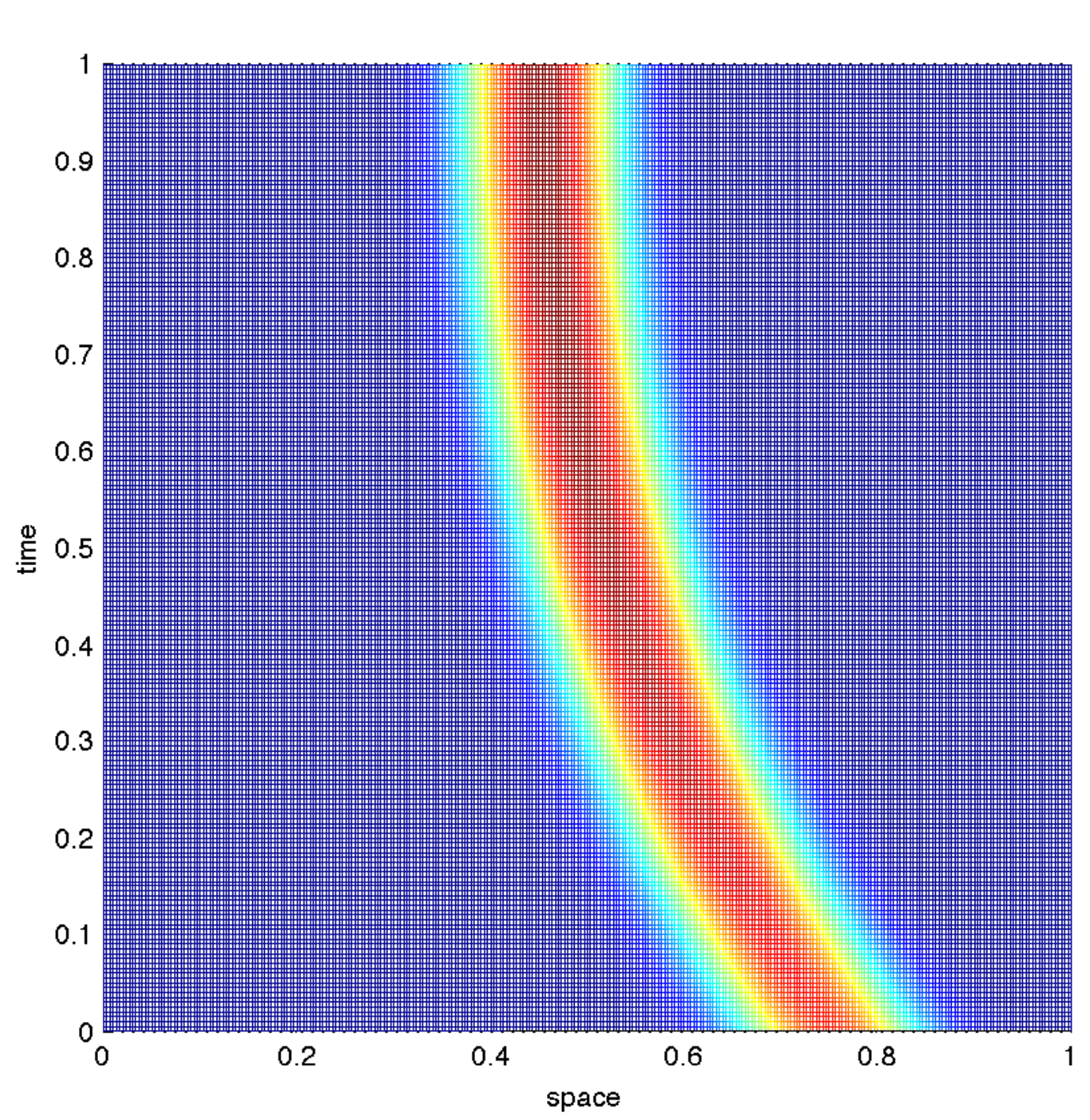} 
\includegraphics[width=5cm]{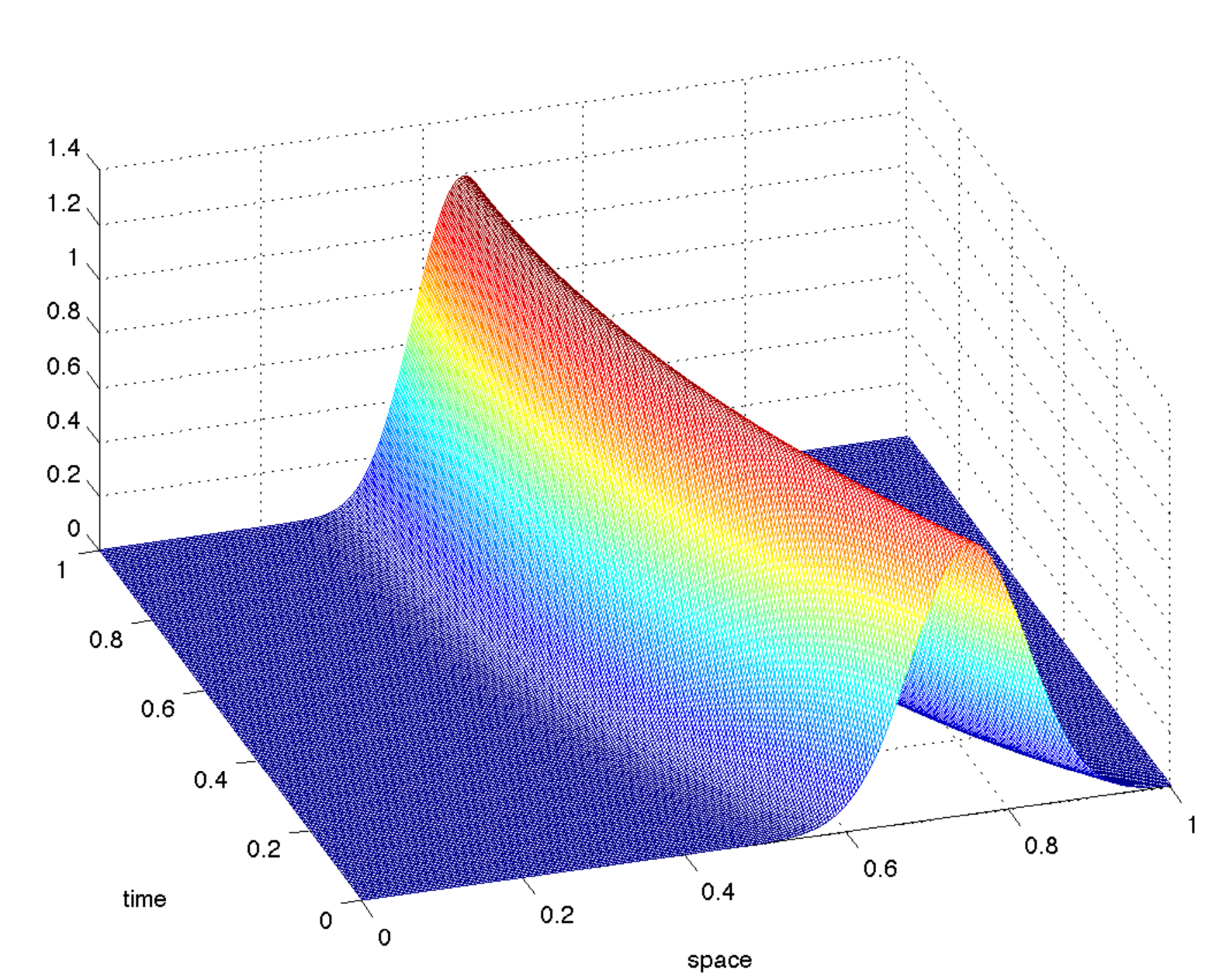} 
\caption{{\bf Mass distribution $m^{\eps}_{i,k}$}.}
\label{Test2mass}
\end{center}
\end{figure}
Fig. \ref{Test2mass} shows the mass evolution.  As it is expected, during the  evolution the mass distribution tends to concentrate  at the ``low energy'' configuration   $x=0.2$ and at the same time the  second term in $F$  penalize  high  mass concentrations. The discrete value function  and its gradient are plotted in   Fig. \ref{Test2valuefunction} and in Fig. \ref{Test2error}  we display the errors $E(m^{\eps,p})$ and $E(v^{\eps,p})$ of the fixed--point iterations. 
\begin{figure}[ht!]
\begin{center}
\includegraphics[width=5cm]{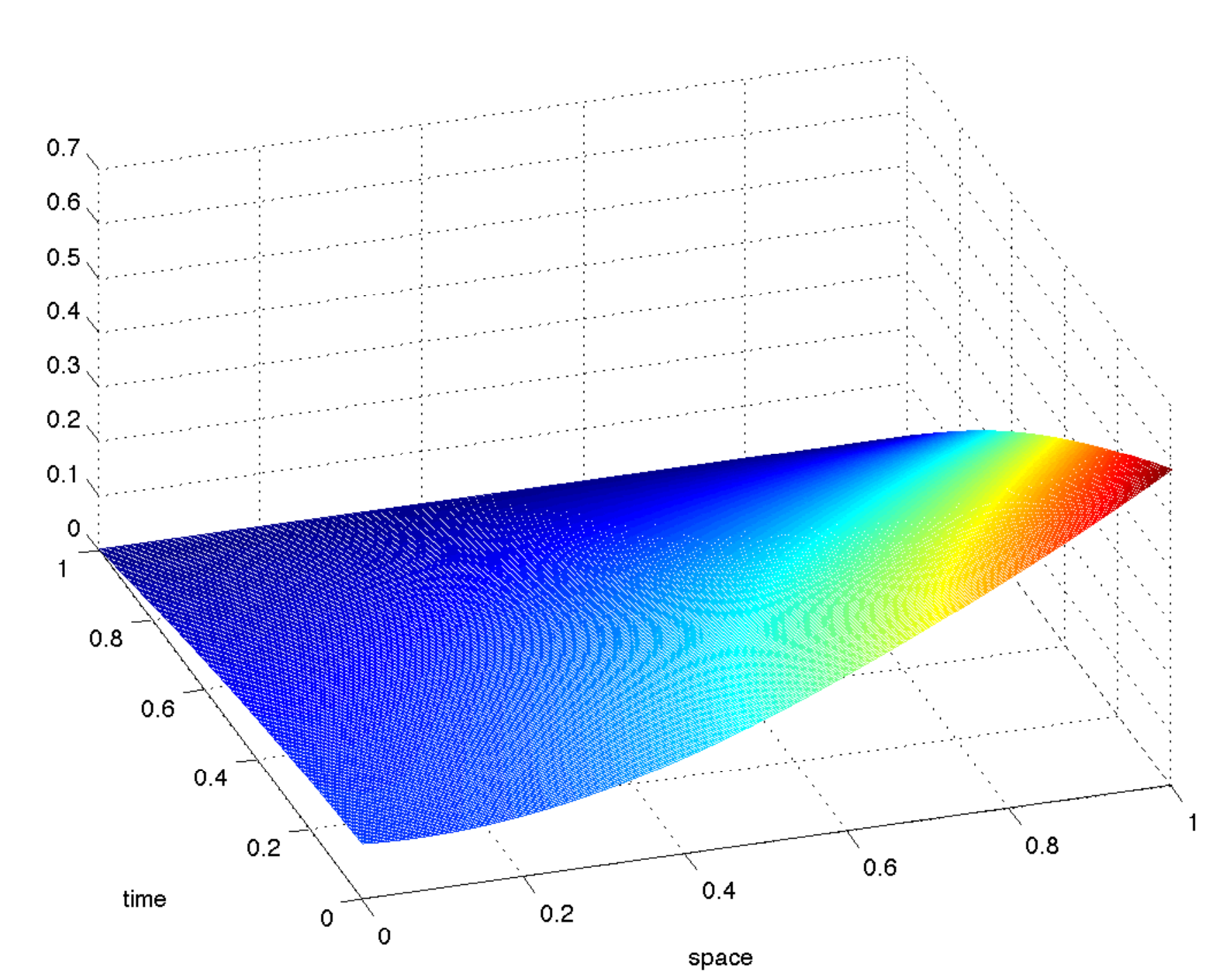}
\includegraphics[width=5cm]{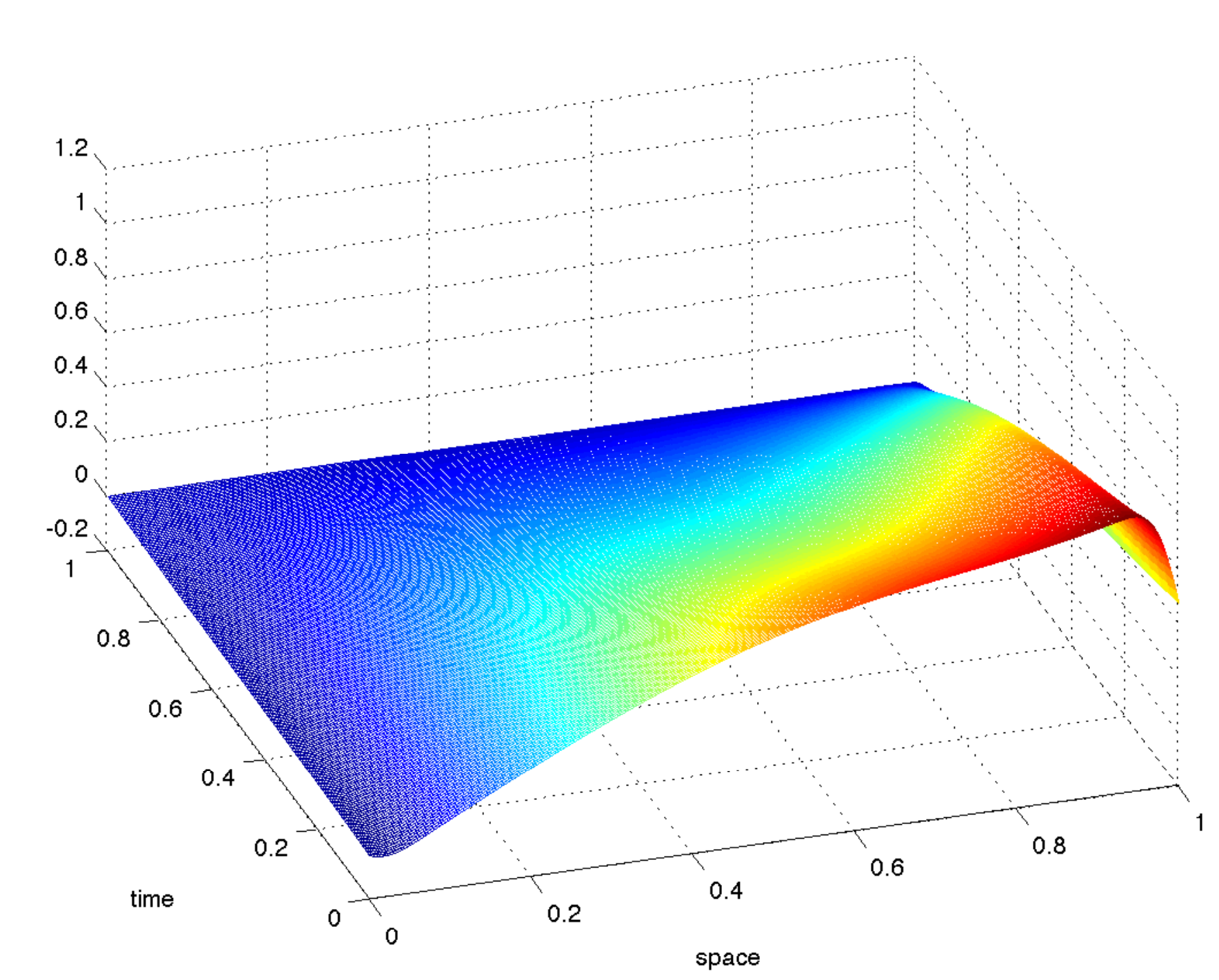}
\caption{ {\bf{ Value function }} $v^{\eps}_{i,k}$ (left) and {\bf Gradient } $D v^{\eps}_{i,k}$(right). }
\label{Test2valuefunction}
\end{center}
\end{figure}

\begin{figure}[ht!]
\begin{center}
\includegraphics[width=5cm]{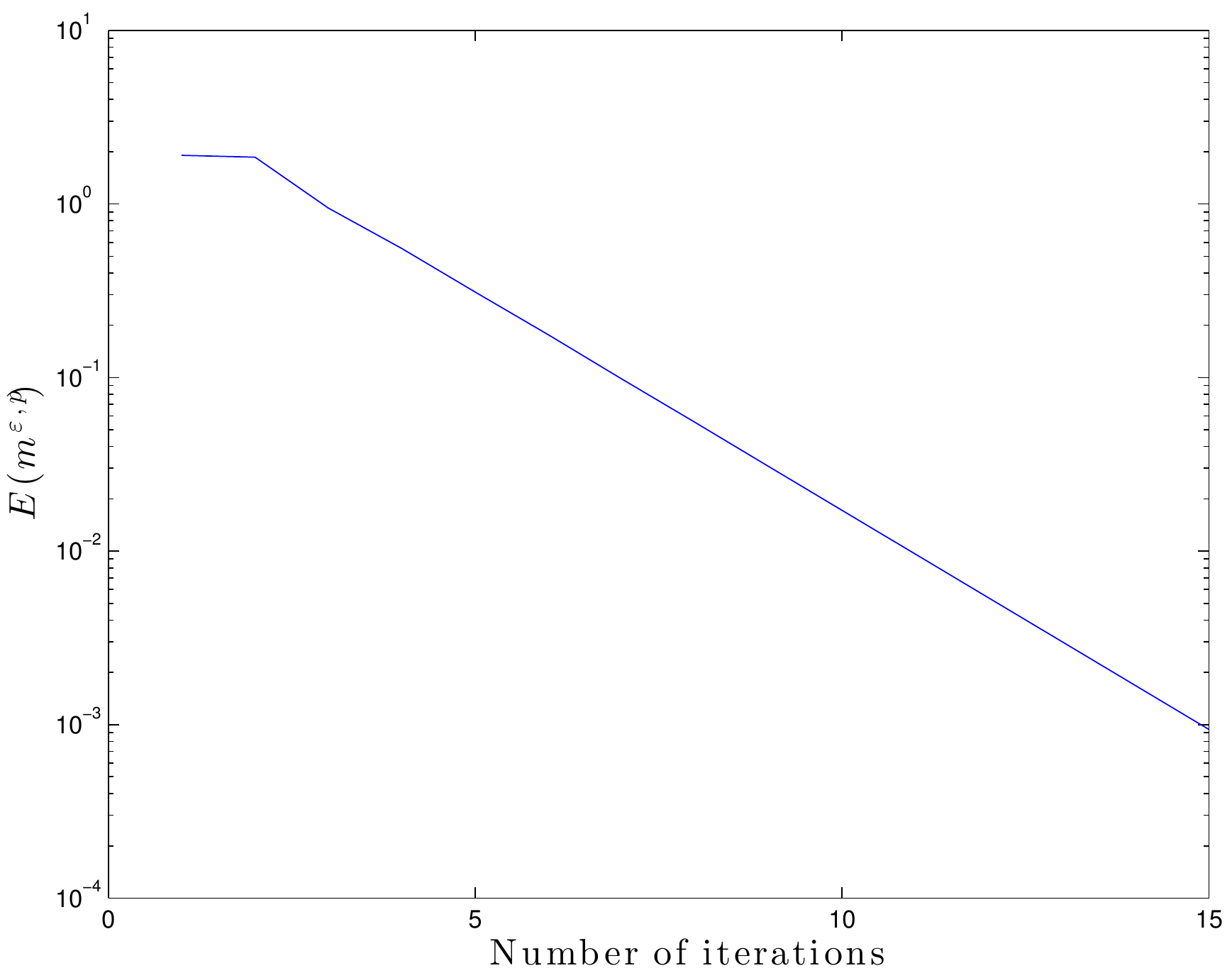}
\includegraphics[width=5cm]{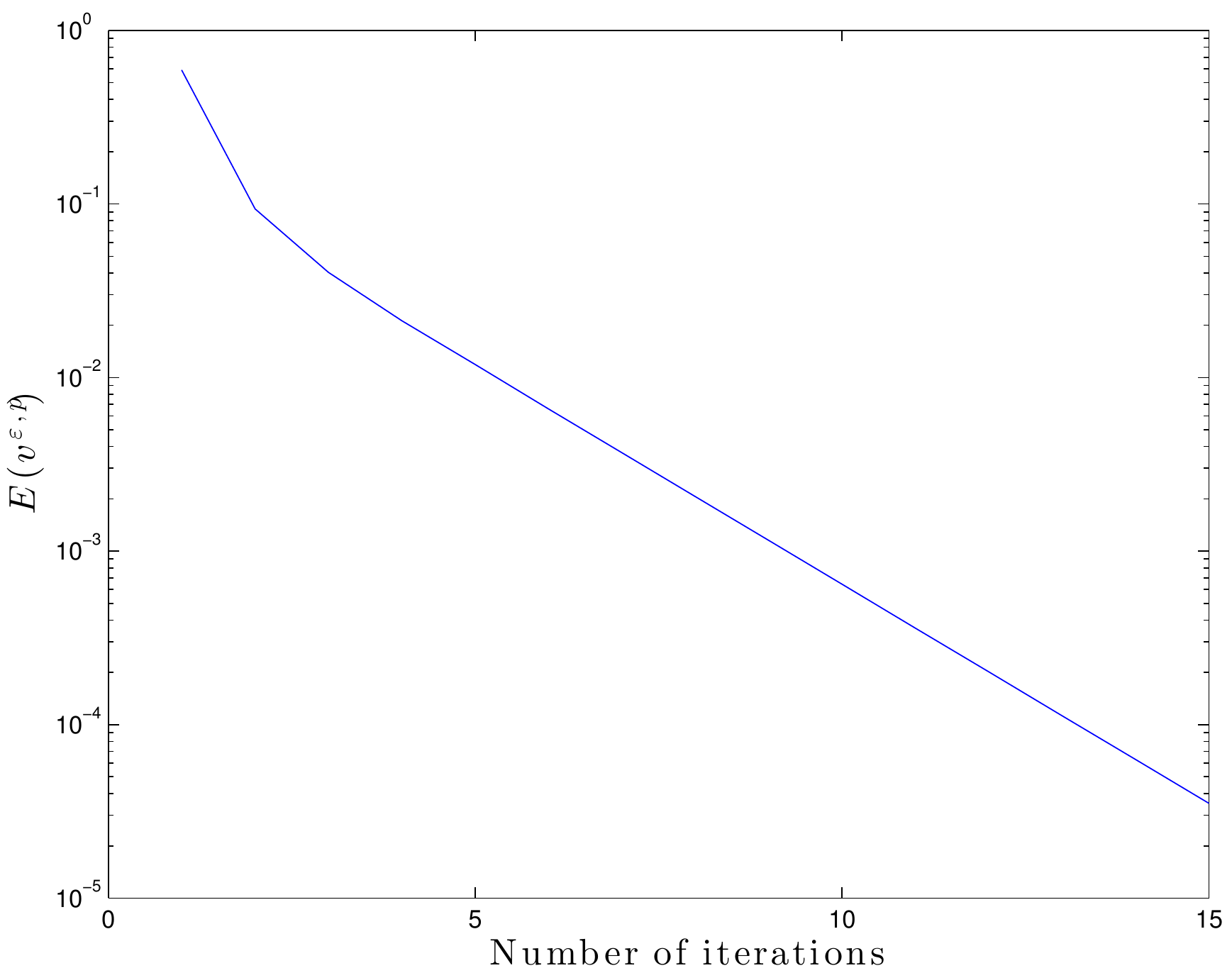}
\caption{ {\bf Errors}: $E(m^{\eps,p})$ {\rm (left)}, \quad $E(v^{\eps,p})$ {\rm (right)}, $\;p=0....,15$.}
\label{Test2error}
\end{center}
\end{figure}
Let us finally compare this test to the case when there is no game, i.e. the running cost does not depend on $m$:
 $$F(x,m )=(x-0.2)^2.$$
 In this case, the system is not coupled and   after one iteration we obtain the solution.  \\
 In Fig. \ref{Test2massb}, the mass evolution is shown. It is seen that, during the evolution,  the measure is allowed to concentrate, due to the absence of a high mass penalization term in $F$. This  shows   qualitative  differences with the results plotted in   Fig. \ref{Test2mass}, where conflict between the agents  was present.
%
 \begin{figure}[htbp!]
\begin{center}
\includegraphics[width=3cm]{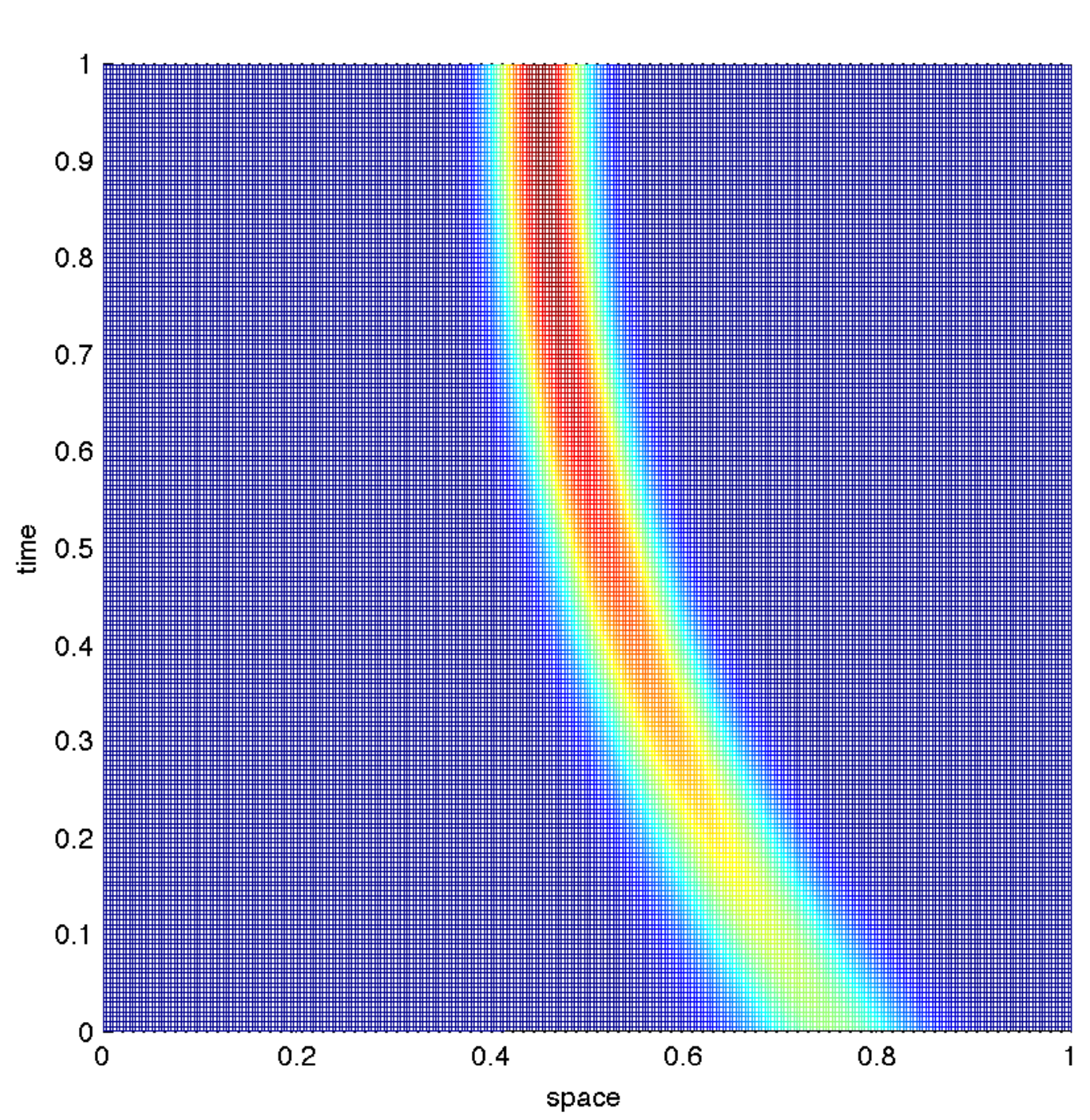} 
\includegraphics[width=5cm]{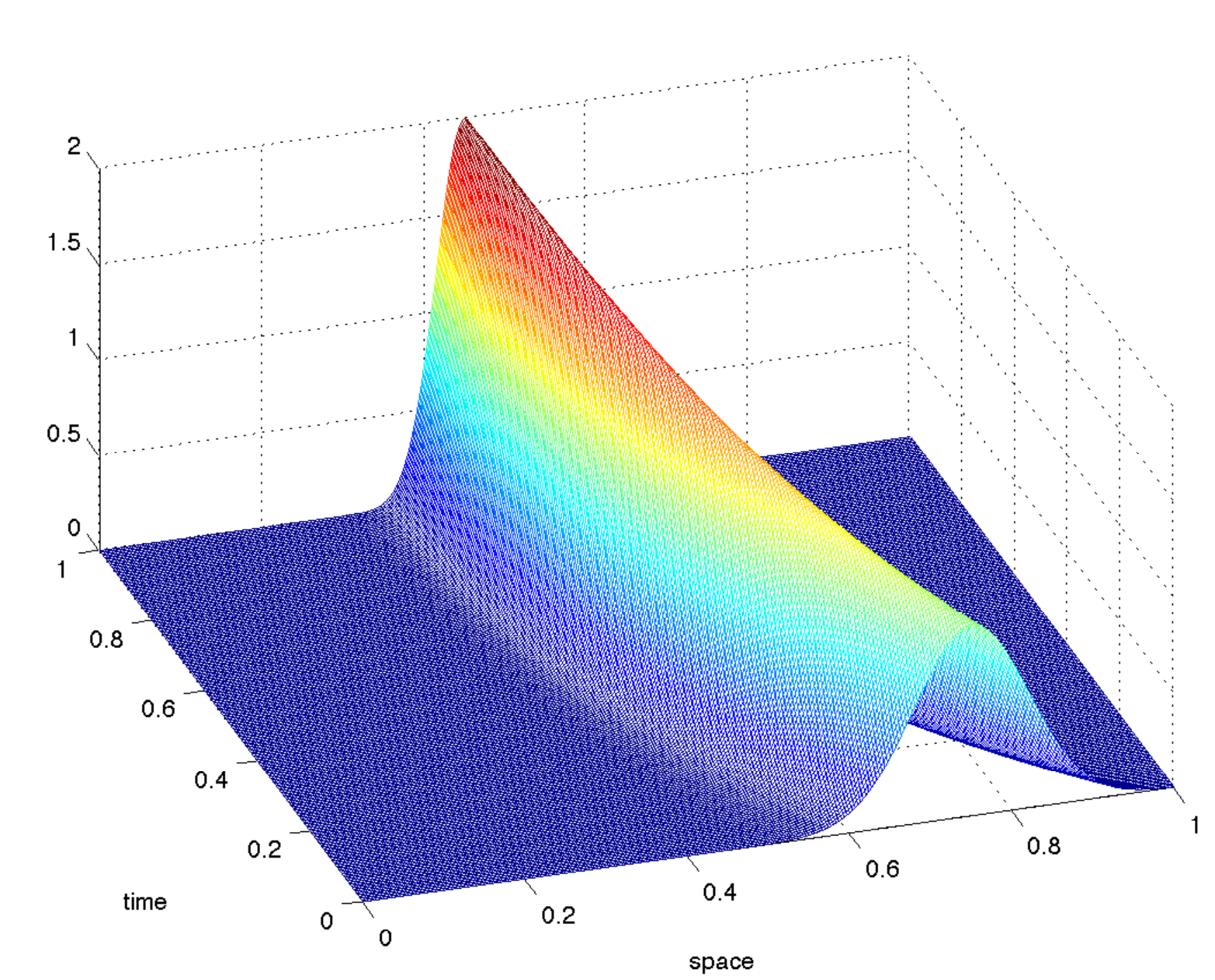} 
\caption{{\bf Mass distribution} $m^{\eps}_{i,k}$ (case ``no game'' with $F=(x-0.2)^2$ ).}
\label{Test2massb}
\end{center}
\end{figure}
\bibliographystyle{plain}
\bibliography{bibpostdocultimo}

\begin{thebibliography}{10}

\bibitem{AchdouCapuzzoCamilli10}
Y.~Achdou, F.~Camilli, and I.~Capuzzo-Dolcetta.
\newblock Mean field games: numerical methods for the planning problem.
\newblock {\em SIAM J. Control Optim.}, 50(1):77--109, 2012.

\bibitem{AchdouCapuzzoCamilli12}
Y.~Achdou, F.~Camilli, and I.~Capuzzo Dolcetta.
\newblock Mean field games: convergence of a finite difference method.
\newblock {\em Preprint}, 2012.

\bibitem{AchdouCamilliCorrias11}
Y.~Achdou, F.~Camilli, and L.Corrias.
\newblock On numerical approximations of the {H}amillton-{J}acobi-transport
  system arysing in high frequency.
\newblock {\em Preprint}, 2011.

\bibitem{AchdouCapuzzo10}
Y.~Achdou and I.~Capuzzo-Dolcetta.
\newblock Mean field games: numerical methods.
\newblock {\em SIAM J. Numer. Anal.}, 48(3):1136--1162, 2010.

\bibitem{Ambrosiogiglisav}
L.~Ambrosio, N.~Gigli, and G.~Savar{\'e}.
\newblock {\em Gradient flows in metric spaces and in the space of probability
  measures}.
\newblock Lectures in Mathematics ETH Z\"urich. Birkh\"auser Verlag, Basel,
  second edition, 2008.

\bibitem{AubinFran90}
J.-P. Aubin and H.~Frankowska.
\newblock {\em Set-valued analysis}, volume~2 of {\em Systems \& Control:
  Foundations \& Applications}.
\newblock Birkh\"auser Boston Inc., Boston, MA, 1990.

\bibitem{Aumann64}
R.~Aumann.
\newblock Markets with a continuum of traders.
\newblock {\em Econometrica}, 32(1/2), 1964.

\bibitem{BardiCapuzzo96}
M.~Bardi and I.~Capuzzo-Dolcetta.
\newblock {\em Optimal control and viscosity solutions of
  {H}amilton-{J}acobi-{B}ellman equations}.
\newblock Systems \& Control: Foundations \& Applications. Birkh\"auser Boston
  Inc., Boston, MA, 1997.
\newblock With appendices by Maurizio Falcone and Pierpaolo Soravia.

\bibitem{BS91}
G.~Barles and P.E. Souganidis.
\newblock Convergence of approximation schemes for fully nonlinear second order
  equations.
\newblock {\em Asymptotic Anal.}, 4(3):271--283, 1991.

\bibitem{camsilva12}
F.~Camilli and F.~J. Silva.
\newblock A semi-discrete in time approximation for a first order-finite mean
  field game problem.
\newblock {\em Network and Heterogeneous Media}, 7-2:263--277, 2012.

\bibitem{CannSinesbook}
P.~Cannarsa and C.~Sinestrari.
\newblock {\em Semiconcave functions, {H}amilton-{J}acobi equations, and
  optimal control}.
\newblock Progress in Nonlinear Differential Equations and their Applications,
  58. Birkh\"auser Boston Inc., Boston, MA, 2004.

\bibitem{Cardialaguet10}
P.~Cardaliaguet.
\newblock Notes on mean field games: from {P.-L.} {L}ions' lectures at
  {C}oll\`ege de {F}rance.
\newblock {\em Lecture {N}otes given at {T}or {V}ergata}, 2010.

\bibitem{CFF10}
E.~Carlini, M.~Falcone, and R.~Ferretti.
\newblock Convergence of a large time-step scheme for mean curvature motion.
\newblock {\em Interfaces Free Bound.}, 12(4):409--441, 2010.

\bibitem{Ciarlet}
P.~G. Ciarlet and J.-L. Lions, editors.
\newblock {\em Handbook of numerical analysis. {V}ol. {II}}.
\newblock Handbook of Numerical Analysis, II. North-Holland, Amsterdam, 1991.
\newblock Finite element methods. Part 1.

\bibitem{falconeferretilibro}
M.~Falcone and R.~Ferretti.
\newblock {\em Semi-{L}agrangian {A}pproximation {S}chemes for {L}inear and
  {H}amilton-{J}acobi {E}quations}.
\newblock MOS-SIAM Series on Optimization, to appear.

\bibitem{GosseJames02}
L.~Gosse and F.~James.
\newblock Convergence results for an inhomogeneous system arising in various
  high frequency approximations.
\newblock {\em Numer. Math.}, 90(4):721--753, 2002.

\bibitem{gueant10}
O.~Gu\'eant.
\newblock Mean field games equations with quadratic hamiltonian: a specific
  approach.
\newblock {\em Mathematical Models and Methods in Applied Sciences}, 22, 2012.

\bibitem{GLL10}
O.~Gu{\'e}ant, J.-M-Lasry, and P.-L. Lions.
\newblock Mean field games and applications.
\newblock In {\em Paris-{P}rinceton {L}ectures on {M}athematical {F}inance
  2010}, volume 2003 of {\em Lecture Notes in Math.}, pages 205--266. Springer,
  Berlin, 2011.

\bibitem{Lachapelle10}
A.~Lachapelle and M.-T. Wolfram.
\newblock On a mean field game approach modeling congestion and aversion in
  pedestrian crowds.
\newblock {\em Transportation Research Part B: Methodological}, 45:1572--1589,
  2011.

\bibitem{LasryLions06i}
J.-M. Lasry and P.-L. Lions.
\newblock Jeux \`a champ moyen. {I}. {L}e cas stationnaire.
\newblock {\em C. R. Math. Acad. Sci. Paris}, 343(9):619--625, 2006.

\bibitem{LasryLions06ii}
J.-M. Lasry and P.-L. Lions.
\newblock Jeux \`a champ moyen. {II}. {H}orizon fini et contr\^ole optimal.
\newblock {\em C. R. Math. Acad. Sci. Paris}, 343(10):679--684, 2006.

\bibitem{LasryLions07}
J.-M. Lasry and P.-L. Lions.
\newblock Mean field games.
\newblock {\em Jpn. J. Math.}, 2:229--260, 2007.

\bibitem{Lintadmor01}
C.-T. Lin and E.~Tadmor.
\newblock {$L^1$}-stability and error estimates for approximate
  {H}amilton-{J}acobi solutions.
\newblock {\em Numer. Math.}, 87(4):701--735, 2001.

\bibitem{cursolions}
P.-L. Lions.
\newblock \textit{Cours au {C}oll\`ege de {F}rance}.
\newblock {\em {\rm www.college-de-france.fr}}, 2007-2008.

\bibitem{Osherbook}
S.~Osher and R.~Fedkiw.
\newblock {\em Level set methods and dynamic implicit surfaces}, volume 153 of
  {\em Applied Mathematical Sciences}.
\newblock Springer-Verlag, New York, 2003.

\bibitem{PiccoliTosin}
B.~Piccoli and A.~Tosin.
\newblock Time-evolving measures and macroscopic modeling of pedestrian flow.
\newblock {\em Arch. Ration. Mech. Anal.}, 199(3):707--738, 2011.

\bibitem{PuopadRascle}
F.~Poupaud. and M.~Rascle.
\newblock Measure solutions to the linear multi-dimensional transport equation
  with non-smooth coefficients.
\newblock {\em Comm. Partial Differential Equations}, 22(1-2):337--358, 1997.

\bibitem{quartesaccosaleri07}
A.~Quarteroni, R.~Sacco, and F.~Saleri.
\newblock {\em Numerical Mathematics (Second Ed.)}.
\newblock Springer, Berlin, 2007.

\bibitem{Sethianbook}
J.~A. Sethian.
\newblock {\em Level set methods and fast marching methods}, volume~3 of {\em
  Cambridge Monographs on Applied and Computational Mathematics}.
\newblock Cambridge University Press, Cambridge, second edition, 1999.
\newblock Evolving interfaces in computational geometry, fluid mechanics,
  computer vision, and materials science.

\bibitem{TosinFrasca}
A.~Tosin and P.~Frasca.
\newblock Existence and approximation of probability measure solutions to
  models of collective behaviors.
\newblock {\em Netw. Heterog. Media}, 6(3):561--596, 2011.

\end{thebibliography}
\end{document}